\documentclass[11pt,reqno]{amsart}
\usepackage{graphicx} %
\usepackage[margin=1in]{geometry}

\usepackage{euflag}
\usepackage{amsmath, amssymb}
\usepackage{mathabx}

\usepackage[shortlabels]{enumitem}

\DeclareMathAlphabet{\lcal}{U}{dutchcal}{m}{n}

\allowdisplaybreaks

\usepackage{xcolor} %
\definecolor{darkred}{rgb}{0.5,0,0} %
\definecolor{darkblue}{rgb}{0,0,0.5} %

\usepackage[pagebackref,colorlinks,linkcolor=darkred,citecolor=darkblue,urlcolor=darkblue,hypertexnames=true]{hyperref}

\usepackage{lmodern}
\usepackage[T1]{fontenc}

\makeatletter
\renewcommand{\subsubsection}{\@startsection{subsubsection}{3}{\z@}%
	{.5\linespacing\@plus.7\linespacing}{-.5em}%
	{\normalfont\fontseries{bx}\selectfont}} %
\makeatother

\usepackage{mathrsfs}
\usepackage[normalem]{ulem}

\newcommand{\spann}{\operatorname{span}}

\newcommand{\cA}{\mathcal A}
\newcommand{\cB}{\mathcal{B}}
\newcommand{\cD}{\mathcal D}
\newcommand{\cE}{\mathcal E}
\newcommand{\cF}{\mathcal{F}}

\newcommand{\cH}{\mathcal H}

\newcommand{\cK}{\mathcal{K}}

\newcommand{\cM}{\mathcal{M}}
\newcommand{\cN}{\mathcal{N}}

\newcommand{\cS}{\mathcal{S}}
\newcommand{\cT}{\mathcal{T}}
\newcommand{\cU}{\mathcal{U}}

\newcommand{\la}{\langle}
\newcommand{\ra}{\rangle}
\newcommand{\II}[1]{\left\langle x \right\rangle_{#1}}

\newcommand{\vg}{{\tt{g}}}

\newcommand{\C}{\mathbb C}
\newcommand{\N}{\mathbb N}
\newcommand{\bA}{\mathbb{A}}

\newcommand{\real}{\operatorname{real}}

\newcommand{\df}[1]{\textit{#1}}

\newcommand{\ad}{\lcal{d}}
\newcommand{\Ftgd}{\mathcal{F}^2_{\vg,\ad}}  %
\newcommand{\ME}{\mathbb{E}}  %
\newcommand{\SG}{G}  %

\newcommand{\wtC}{\widetilde{\Sigma}}  %
\newcommand{\oS}{\mathscr{S}} %
\newcommand{\oC}{\mathfrak{A}} %
\newcommand{\UCP}{\operatorname{UCP}} %
\newcommand{\Ddom}{\mathscr{D}}
\newcommand{\wtpi}{\widetilde{\pi}}
\newcommand{\penM}{\Lambda}
\newcommand{\vd}{{\tt{d}}}
\newcommand{\vh}{{\tt{h}}}
\newcommand{\VT}{T}

\newcommand{\range}{\operatorname{range}}

\newcommand{\nmL}{\mathscr{L}}
\newcommand{\snmL}{\mathfrak{L}}

\newcommand{\trace}{\operatorname{tr}}
\newcommand{\groupW}{\mathbb{W}}

\newcommand{\BO}{B}
\newcommand{\HE}{\cE}
\newcommand{\HG}{\mathcal{G}}

\newcommand{\Zn}{\Z_n}
\newcommand{\CZ}{\C[\Z_n]}
\newcommand{\POVM}{\operatorname{POVM}}
\newcommand{\POVMn}{\POVM(n)}

\newcommand{\R}{\mathbb{R}}
\newcommand{\cbast}{\check{\Asterisk}}

\newcommand{\cx}{\lcal{x}}
\newcommand{\Z}{\mathbb{Z}}
\newcommand{\rep}{\tau}

\newcommand{\povm}{\operatorname{povm}}

\newcommand{\ch}{\lcal{h}}
\newcommand{\cp}{\lcal{p}}
\newcommand{\cq}{\lcal{q}}

\newcommand{\zz}{z}
\newcommand{\pp}{\widecheck{p}}   
\newcommand{\fq}{\mathfrak{q}}
\newcommand{\fqy}{y}

\newcommand{\Cly}{\C\langle \fq \rangle}

\newcommand{\affL}{\widehat{L}}
\newcommand{\affp}{\hat{p}}
\newcommand{\affq}{\hat{q}}
\newcommand{\affr}{\hat{r}}

\newcommand{\affbA}{\widehat{\bA}}

\newcommand{\notV}{\mathbf V}

\newcommand{\mell}{\mu}
\newcommand{\tcH}{\cK}

\makeatletter
\newtheorem*{rep@thm}{\rep@title}
\newcommand{\newreptheorem}[2]{%
	\newenvironment{rep#1}[1]{%
		\def\rep@title{#2 \ref{##1}}%
		\begin{rep@thm}}%
		{\end{rep@thm}}}
\makeatother

\newtheorem{thm}{Theorem}[section]

\newtheorem{corollary}[thm]{Corollary}

\newtheorem{lemma}[thm]{Lemma}

\newtheorem{proposition}[thm]{Proposition}
\newtheorem{prop}[thm]{Proposition}

\newtheorem{thmA}{Theorem}

\newtheorem{remark}[thm]{Remark}

\theoremstyle{plain}
\newtheorem*{theorem*}{Theorem}

\numberwithin{equation}{section}

\begin{document}
	
	\title[Positivstellensatz for Matrix Convex Sets]{Operator-Valued Positivstellensätze on Matrix Convex Sets and Free Products of Finite Abelian Groups}

	\author[A.\ Jindal]{Abhay Jindal${}^{1,Q}$}
	\address{Abhay Jindal, Faculty of Mathematics and Physics, University of Ljubljana, Slovenia}
	\email{abhay.jindal@fmf.uni-lj.si}
	\thanks{${}^1$Supported by the Slovenian Research Agency 
		program P1-0222 and grant J1-50002. AJ thanks
		l’\'Ecole Polytechnique and Inria for
		hospitality during the preparation of this manuscript. }
	
	\author[I.\ Klep]{Igor Klep${}^{2,Q}$}
	\address{Igor Klep, Faculty of Mathematics and Physics, University of Ljubljana 
		\& Famnit, University of Primorska, Koper 
		\& Institute of Mathematics, Physics and Mechanics,
		Ljubljana, Slovenia}
	\email{igor.klep@fmf.uni-lj.si}
	\thanks{${}^2$Supported by the Slovenian Research Agency 
		program P1-0222 and grants J1-50002,
		N1-0217,
		J1-60011, J1-50001, J1-3004 and J1-60025. Partially supported by the Fondation de l’\'Ecole polytechnique as part
		of the Gaspard Monge Visiting Professor Program. IK thanks
		\'Ecole Polytechnique and Inria Paris Saclay for
		hospitality during the preparation of this manuscript.}
	
	\author[S.\ McCullough]{Scott McCullough}
	\address{Scott McCullough, Department of Mathematics\\
		University of Florida\\ Gainesville} %
	
	\email{sam@math.ufl.edu}
	\subjclass[2020]{Primary 46L07, 47A56, 47A68, 52A70; Secondary 47A63, 13J30, 46L09}
	\keywords{operator-valued noncommutative polynomial, convex Positivstellensatz, trigonometric polynomial, Fej\'er--Riesz theorem,  sum of squares, completely positive map, GNS construction, free product, POVM}
	
	\thanks{}

	\thanks{${}^Q$This work was performed within the project COMPUTE, funded within the QuantERA II 
		Programme that has received funding from the EU's H2020 research and innovation programme under the GA No 101017733 {\normalsize\euflag}}

	\begin{abstract}
		We prove a Positivstellensatz for operator-valued noncommutative polynomials that are positive on matrix convex sets.
		Specifically, let \(p\in  B(\cH)\otimes \C\langle x\rangle\) be an operator-valued polynomial of degree at most \(2\vd+1\), where \(\cH\) is separable and infinite-dimensional. Let
		$	L(x)=I+\sum_{j=1}^{\vg} A_j x_j $ be a monic linear operator pencil, and let 
		$	\cD_L=\{X\mid L(X)\succeq 0\} $
		be the associated matrix convex set. We show that $p$ is positive on $\cD_L$ if and only if
		\[
		p \ =\ r^*r+q^*\pi(L)q,
		\]
		where $ q,r\in  B(\cH)\otimes \C\langle x\rangle$ have degree at most $\vd$, and $\pi$ is a unital completely positive map on the operator system generated by the coefficients of $L$. 
		The proof combines a Hahn–Banach separation argument with a tailored GNS construction. The main challenge in implementing the  GNS  construction in the present context is that the separation occurs in the product ultraweak topology, so boundedness of the resulting GNS operators is not automatic. We first handle the case of bounded matrix convex sets, using the closedness of the cone of weighted squares (in the product ultraweak topology) as the key technical input, and then pass to the general unbounded case via an approximation argument.
		
		\smallskip
		
		Finally, we apply this convex Positivstellensatz to prove an operator-valued noncommutative Fej\'er--Riesz theorem on free products of finite abelian groups. The key additional ingredients are the universal $\ast$-algebra $\mathrm{povm}(n)$ associated with POVMs, a `perfect' Positivstellensatz for $\mathrm{povm}(n)$, and Boca's theorem on free products of completely positive maps. As a consequence, every positive operator-valued trigonometric polynomial on a free product of finite abelian groups admits a sum-of-squares factorization with explicit complexity bounds.
	\end{abstract}
	
	\maketitle
	
	\newpage
	
	\setcounter{tocdepth}{3}
	\begin{list}{}{%
			\setlength{\leftmargin}{1.5cm}%
			\setlength{\rightmargin}{1.5cm}%
			\setlength{\listparindent}{0pt}
			\setlength{\itemsep}{0pt}
			\setlength{\parsep}{2.5pt}
		}
		\item
		\tableofcontents
	\end{list}
	
	\newpage
	
	\section{Introduction}

	Positivity and sums of squares lie at the heart of operator theory and real algebraic geometry. In the commutative setting, the quest for positivity certificates via sums of squares goes back to Hilbert’s 17th problem in 1900 and is closely intertwined with the development of moment theory and the Positivstellensätze of Schmüdgen \cite{Scm91}
	and Putinar \cite{Put93}; for classical results and modern treatments see \cite{BCR98, Mar08, Sc24}.
	
	\smallskip
	
	In the 21st century, ideas from linear systems theory and optimization \cite{SIG98,dOHMP09, WM21}, quantum physics \cite{brunner,NPA07,NPA08}, and free probability \cite{MS17, VDN92} helped drive the development of the free, or noncommutative, analog into a broad area of noncommutative function theory \cite{KVV14,MS11,AM15,BMV16,PTD22,Voi04,Voi10}. Among its central themes are factorization and Positivstellens\"atze for noncommutative polynomials. An early milestone is Helton’s theorem showing that positive scalar-valued noncommutative polynomials are sums of squares \cite{Hel02}
	(see also \cite{McC01}). 
	Recently, Vol\v ci\v c \cite{Vol21} established a proper noncommutative analog of Artin's solution to Hilbert's 17th problem.
	See also \cite{HM04,HMP04,Po95,JM12,JMS21} and the references therein for further developments.
	
	\smallskip
	
	In the noncommutative setting, sums of squares and positivity are often much more closely aligned than in the classical one.
	This phenomenon is illustrated by convex Positivstellens\"atze \cite{HKM12,HKM17}, which give algebraic certificates for polynomials that are positive on free spectrahedra, that is, on sets defined by linear matrix inequalities \cite{HM12,Zal17}. Such results now play a central role in free analysis and matrix convexity \cite{Kri19,ANT19,Pas22,Vol24}, and are closely tied to the theory of completely positive maps and operator systems \cite{Pau03,EW97,DDSS17,FHL18,EPS24}. Similar rigidity is observed in various noncommutative factorization theorems in the style of Fejér--Riesz \cite{DR10,GW05}, e.g., for positivity in free group algebras \cite{McC01,BT07,Oz13} and virtually-free groups \cite{NT13,KLM}. 
	We also note limitations of algorithmic approaches to noncommutative positivity: positivity is undecidable in certain tensor-product settings \cite{MSZ23,Lin+}, and related non-attainment phenomena occur in the commuting-operator setting \cite{FKM25}.
	
	\subsection{Main results} 
	Motivated by these developments, this paper considers positivity for operator-valued noncommutative polynomials from two complementary perspectives. 
	Our first main result, Theorem \ref{t:main1}, is a convex Positivstellensatz. Roughly speaking, it says that if an operator-valued noncommutative polynomial is positive on a matrix convex set (defined by a linear operator pencil), then it admits a weighted sum-of-squares certificate of optimal half-degree type. The certificate involves a ucp map applied to the pencil, reflecting the operator-valued nature of the problem.
	The second main result, Theorem \ref{t:main2}, applies Theorem~\ref{t:main1} to free products of finite abelian groups and yields an operator-valued noncommutative Fejér--Riesz theorem: every positive trigonometric polynomial on such a free product admits a representation as a sum of hermitian squares, together with explicit bounds on the complexity of that representation. 
	Connections between these results and related work in the literature are outlined in remarks accompanying their statements and in Subsection~\ref{ss:new}.
	
	\smallskip
	
	We are grateful to Mehta--Slofstra--Zhao \cite{MSZ} for communicating to us an argument that plays a key role in the proof of Theorem~\ref{t:main2}.
	
	\subsubsection{Noncommutative polynomials and linear pencils}
	Fix  a positive integer \df{$\vg$}.  Let \df{$\la x \ra$}  denote the free monoid on the $\vg$ letters of the alphabet $x = \{ x_{1}, \dots, x_{\vg}\}.$ Its multiplicative identity is the empty word $\varnothing.$ We endow $\la x\ra$ with the \df{graded lexicographic order}. The length of a word $w \in \la x \ra$ is denoted by \df{$|w|$}. The set of all elements (words) of $\la x \ra$ of length (or degree) at most $\ad$ is \df{$\la x \ra_{\ad}$}. Its cardinality %
	is \index{$N(\ad)$} $ N(\ad) \ =\  \sum_{i=0}^\ad \vg^i .$
	
	\smallskip
	
	Unless explicitly stated otherwise, \df{$\cH$} will be a fixed complex separable (infinite-dimensional) Hilbert space. Let  \df{$ B(\cH)$} denote the space of all bounded linear operators on $\cH$, and 
	let \df{$\cA$} denote the free semigroup $ B(\cH)$-algebra on $x;$ that is, $\cA=
	B(\cH)\otimes \C\la x\ra =
	B(\cH)\la x\ra .$ An element $p$ of $\cA$ is of the form, 
	\begin{equation}\label{eq:poly}
		p \ =\  \sum_{w\in\la x\ra}^{\rm finite} P_{w} w,
	\end{equation}
	where
	$P_{w} \in  B(\cH),$ and is referred to as an (operator-valued) polynomial in $x.$ Let \df{$\C\la x\ra_{\ad}$} $\subset \C\la x\ra$ denote the complex-valued polynomials of degree at most $\ad$ and \df{$\cA_{\ad}$} denote the elements of $\cA$ of degree at most $\ad$.
	
	\smallskip
	
	Equip $\cA$ with the involution \df{$^*$}: on letters, $x_{j}^{*} = x_{j},$ on a word $w = x_{i_1} \cdots x_{i_n} \in \la x \ra,$ 
	\[
	w^{*} \ =\  x_{i_n} \cdots x_{i_1};
	\]
	and, on a polynomial $p$ as in \eqref{eq:poly}, \index{$p^*$}
	\[
	p^{*} \ =\  \sum P_{w}^{*} w^{*},
	\]
	where $P_{w}^{*}$ is the adjoint of the operator $P_{w}$ in $ B(\cH).$
	
	\smallskip
	
	Let $X = (X_{1}, \dots, X_{\vg})$ be a tuple of bounded operators on some Hilbert space.  The 
	\df{evaluation} of $p$ at $X$ is defined as 
	\[
	p(X) \ =\  \sum P_{w} \otimes X^{w},
	\]
	where $X^{w} = X_{i_1} \dots X_{i_n}$ for $w = x_{i_1} \dots x_{i_n}.$ In general, $p(X)^{*}$ (the adjoint of $p(X)$) and $p^{*}(X)$ are not the same. They coincide if $X$ is a tuple of self-adjoint operators. 
	
	\smallskip
	
	As a special case of an operator-valued polynomial,   
	let $\tcH$ be a Hilbert space, and let $L$ 
	denote the linear operator pencil (affine linear polynomial) %
	\begin{equation*} 
		L(x) \ =\ P_{0} + \sum\limits_{j=1}^{\vg} P_{j} x_{j},
	\end{equation*}
	where $P_{0}, \ldots, P_{\vg}$ are bounded self-adjoint operators on $\tcH.$
	In the case $P_0=I_{\tcH},$ the polynomial $L$ is a {monic linear operator pencil}. 
	
	\smallskip
	
	For a bounded operator $T$ on a Hilbert space, the notation $T \succeq 0$ means that the operator $T$ is positive semidefinite (psd). The operator inequality
	\[
	L(X) \ :=\ P_{0} \otimes I + \sum\limits_{j=1}^{\vg} P_{j} \otimes X_{j} \ \succeq \ 0 
	\]
	is called a \df{linear operator inequality} \df{(LOI)}. Let \df{$\cD_{L}$} denote the collection of all $\vg$-tuples of self-adjoint matrices $X = (X_{1}, \dots, X_{\vg})$ of any order such that $L(X) \succeq 0.$ We say that $\cD_{L}$ is bounded if there exists a natural number $N$ such that 
	$\sup\{\|X_{j} \| \,:\,X \in \cD_{L} \} \ \leq \ N $
	for all $j = 1,\dots, \vg,$ where $\| \cdot \|$ denotes the operator norm. 
	
	\smallskip
	
	Throughout this article, unless explicitly stated otherwise, we fix a monic linear pencil \index{$L(x)$}
	\begin{equation}
		\label{eq:L=LA}
		L(x) \ =\  I + \sum_{j=1}^\vg \bA_j x_j,
	\end{equation}
	where the \df{$\bA_j$} are self-adjoint operators on the Hilbert space \df{$\tcH$}.
	Let $\oS_L \subset {B}(\tcH)$ \index{$\oS_L$} denote the (unital) operator system spanned by 
	$\bA_{1}, \dots, \bA_{\vg}$, and let \df{${C}^{*}(\oS_{L})$} $\subset {B}(\tcH) $ denote the $C^{*}$-algebra generated by $\oS_{L}$. Note that $\oS_{L}$ is finite-dimensional, and {${C}^{*}(\oS_{L})$ is separable. We write \df{$\UCP(\oS_{L}, {B}(\cH ))$} for the set of unital completely positive (ucp) maps from $\oS_{L}$ into ${B}(\cH )$. 
		For  $\pi \in \UCP (\oS_{L},  B(\cH))$, the linear pencil
		\[
		\pi(L) \ :=\  \pi(I) + \sum\limits_{j=1}^{\vg} \pi (\bA_{j}) x_{j}
		\]
		is monic. If $X \in \cD_{L},$ then $X \in \cD_{\pi(L)}.$ Indeed, for such $X,$  
		\[
		\pi(L)(X) \ =\ \pi(I) \otimes I + \sum\limits_{j=1}^{\vg} \pi (\bA_{j}) \otimes  X_{j} \ =\ (\pi \otimes {\rm id})(L(X)) \ \succeq \  0.
		\]

		We are now in a position to present our first main result. 
		By the Effros-Winkler Hahn-Banach theorem \cite{EW97,HM12} combined with a routine density argument, every closed matrix
		convex set is of the form $\cD_L$ for a LOI $L$. Accordingly, Theorem \ref{t:main1}
		yields a Positivstellensatz for operator-valued polynomials that are positive on a closed matrix convex set:
		
		\begin{thmA}\label{t:main1}
			Let $\cH$ be a separable infinite-dimensional Hilbert space.
			Let $p \in  B(\cH) \otimes \C\la x\ra$ be an operator-valued polynomial of degree at most $2\vd+1$, and let 
			\[
			L \ =\  I + \sum_{j=1}^{\vg} \bA_{j} x_{j} \ \in\   B(\tcH) \otimes \C\la x\ra
			\]
			be a monic linear pencil.     
			Then the following are equivalent:
			\begin{enumerate}[\rm (i)]\itemsep=5pt
				\item For any $n \in \mathbb{N}$ and any $\vg$-tuple of self-adjoint matrices $X = 
				(X_{1}, \ldots, X_{\vg}) \in M_{n}(\C)^{\vg},$ $p(X) \succeq 0$ whenever $L(X) \succeq 0;$  
				\item There exist $q,r \in \cA_{\vd}$ and a ucp map $\pi: \oS_{L} \to  B(\cH)$ such that 
				\begin{equation*}\label{eq:wsos}
					p \ =\ r^{*}r + q^{*} \pi(L) q,
				\end{equation*}
				where $\oS_{L} \subset  B(\tcH)$ is the unital operator system spanned by $\bA_{1}, \ldots, \bA_{\vg}.$
			\end{enumerate}
		\end{thmA}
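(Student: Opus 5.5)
The implication (ii)$\Rightarrow$(i) is immediate. If $L(X)\succeq 0$, then $\pi(L)(X)=(\pi\otimes{\rm id})(L(X))\succeq 0$ by complete positivity, as already observed before the statement. Hence
\[
p(X)=r(X)^*r(X)+q(X)^*\,\pi(L)(X)\,q(X)\succeq 0,
\]
since $X$ is self-adjoint and so $p^*(X)=p(X)^*$ for all the polynomials involved. The work is therefore in (i)$\Rightarrow$(ii), which I will prove by separation followed by a GNS construction, first in the bounded case and then in general.

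\emph{Step 1 (the cone).} Let $\cM$ be the set of all $r^*r+q^*\pi(L)q$ with $q,r\in\cA_{\vd}$ and $\pi\in\UCP(\oS_L,B(\cH))$. Since $\cH\cong\cH\otimes\C^m$, direct sums can be absorbed: $\sum r_i^*r_i$ becomes a single $r^*r$ after stacking the $r_i$ through an isometry $\cH^m\to\cH$, and similarly $\pi_1\oplus\pi_2$ compressed by isometries is again ucp. So $\cM$ is a convex cone inside the self-adjoint part of $\cA_{2\vd+1}$, which I identify with a finite product of copies of $B(\cH)$ carrying the product ultraweak topology.

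The key technical claim is that, \emph{when $\cD_L$ is bounded}, $\cM$ is closed in this topology. The approach is as follows.
\begin{enumerate}[\rm (a)]
\item Boundedness gives $N^2-x_j^2\in$ (scalar) $\cM$-type certificates of low degree, so the constant term of $p\in\cM$ controls the norms of the coefficients of $q$ and $r$.
\item Given a bounded net in $\cM$, the nets of $q,r$ are bounded, and I extract ultraweakly convergent subnets. The set $\UCP(\oS_L,B(\cH))$ is compact in the BW topology, so the $\pi$'s converge as well.
\item Products $q^*\pi(L)q$ need not converge under weak limits. To handle this I rewrite $q^*\pi(L)q=V^*\rho(\widehat L)V$, with $\rho$ a Stinespring-type dilation and $V$ a bounded column built from the coefficients of $q$. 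The set of such compressions $V^*\Phi V$ with $\|V\|$ bounded is a cone of completely positive maps evaluated on a fixed finite-dimensional operator system, and that set is BW-closed.
\item Krein--Smulian reduces closedness to closedness of bounded nets, which is exactly what (a)--(c) provide.
\end{enumerate}

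\emph{Step 2 (separation and GNS).} Suppose $p\notin\cM$. Hahn--Banach in the product ultraweak topology yields a normal functional $\lambda$ with $\lambda(\cM)\ge0$ and $\lambda(p)<0$. Perturbing by a strictly positive functional (for instance one built from a faithful normal state paired with $\sum_{w}w^*w$), I may assume $\lambda$ is strictly positive on nonzero squares $q^*q$ with $q\in\cA_{\vd}$. Normality lets me write $\lambda$ via trace-class operators.

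Next I build the GNS space. Take $E=\cA_{\vd}\otimes\cH$ (vectors $q\otimes h$) and define the form $\langle q,r\rangle=\lambda(r^*q)$, suitably vectorized so that the Hilbert space is $\cH\otimes\C\la x\ra_{\vd}$ with a positive definite Gram form. Define $X_j$ as the compression of multiplication by $x_j$ from degree $\vd$ into the degree-$\vd$ space, which is well defined for degree $\vd-1$ inputs and extended arbitrarily but self-adjointly.

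The main obstacle is boundedness of the $X_j$, because $E$ is infinite-dimensional and the form is only given by a normal functional. This is where boundedness of $\cD_L$ enters: $\lambda\ge0$ on $q^*(N^2-x_j^2)q$, which I derive from the cone, gives $\|X_jv\|\le N\|v\|$.

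Positivity of $\lambda$ on $q^*\pi(L)q$ for \emph{all} ucp $\pi$ then yields $L(X)\succeq0$ on the relevant subspace. Cutting down to finite-dimensional compressions (normality makes this approximation legitimate), I obtain matrix points $X\in\cD_L$ and vectors with $\langle p(X)v,v\rangle\approx\lambda(p)<0$, using the degree $2\vd+1$ bound, since the odd top degree is reachable by the one-step multiplication trick. This contradicts (i).

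\emph{Step 3 (unbounded $\cD_L$).} Replace $L$ by $L_\varepsilon=L\oplus(I\pm\varepsilon x_j)_j$. Then $\cD_{L_\varepsilon}\subset\cD_L$ is bounded, so by Steps 1--2 there are certificates
\[
p+\varepsilon' I=r_\varepsilon^*r_\varepsilon+q_\varepsilon^*\pi_\varepsilon(L_\varepsilon)q_\varepsilon .
\]
I take limits as $\varepsilon\to0$. The $\pi_\varepsilon$ restricted to the $\varepsilon x_j$ parts contribute terms tending to $\pm$ multiples of the constant, and these can be absorbed into $\pi(I)$ via a BW-limit. Uniform bounds on $q_\varepsilon,r_\varepsilon$ come from the constant term again, with monicity of $L$ supplying $\pi(I)=I$; I expect this to require care but to follow the closedness argument of Step 1.
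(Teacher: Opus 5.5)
Your argument for (ii)$\Rightarrow$(i), the use of compactness of bounded cp maps and Stinespring refactoring in Step~1(c), and the reduction of the unbounded case to pencils $L\oplus\bigoplus_j(I\pm\varepsilon x_j)$ all parallel the paper.

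\textbf{The main gap.} Everything rests on bounding the coefficients of $q$ and $r$ when $p=r^*r+q^*\pi(L)q$ is bounded, and your Step~1(a) does not do this. It asserts that the constant term of $p$ controls these coefficients, which is false in the free algebra. The constant coefficient of $r^*r$ is $R_\varnothing^*R_\varnothing$, and that of $q^*\pi(L)q$ is $Q_\varnothing^*Q_\varnothing$. So, for example, $p=x_1^2=r^*r$ with $r=x_1$ has zero constant term. Archimedean certificates such as $N^2-x_j^2$ do not change this. Without such a bound you get neither closedness of the cone (hence no Hahn--Banach separation) nor the limit $\varepsilon\to0$ in Step~3, where you appeal to the same mechanism.

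The missing idea is to evaluate at one fixed self-adjoint tuple $A$ that lies strictly inside $\cD_L$ and on which evaluation is injective on $\cA_{\ad}$ with bounded inverse. The paper takes compressed symmetrized creation operators on Fock space, rescaled so that $L(A)\succeq\frac12 I$; their extraction matrix $\ME_{\ad}$ is invertible. Then $0\preceq r(A)^*r(A)\preceq p(A)$ and $\frac12 q(A)^*q(A)\preceq p(A)$ bound $\|r(A)\|$ and $\|q(A)\|$, and the extraction formula turns this into coefficient bounds. This needs no boundedness of $\cD_L$, so the cone is closed in general. It is also uniform over the perturbed pencils, because $L_\varepsilon(A)\succeq\frac12 I$ for small $\varepsilon$, which is exactly what Step~3 requires. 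In that limit the extra blocks tend to $I$ and contribute a square that goes into $r^*r$. The $L$-block yields a cp map $\psi$ on $\oS_L$ with $\psi(I)\preceq I$; it must be refactored as $T^*\widetilde{\pi}(\cdot)T$ with $\widetilde{\pi}$ ucp and $T$ absorbed into $q$, not simply ``absorbed into $\pi(I)$''.

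\textbf{The GNS step also needs repair.} If $X_j$ is extended arbitrarily on the top-degree part, then $\langle X_jf,g\rangle\neq\lambda(g^*x_jf)$ for $f,g$ of degree $\vd$. You then lose both $\lambda(p)=\langle p(X)\gamma,\gamma\rangle$ on the degree-$(2\vd+1)$ terms and the passage from $\lambda(q^*\pi(L)q)\ge0$ to $L(X)\succeq0$. The top-degree action has to be dictated by $\lambda$. One option is to define $X_j$ through the form $(f,g)\mapsto\lambda(g^*x_jf)$ on all of degree $\le\vd$. The paper's option is to separate in $\cA_{2\vd+2}$ from $\wtC_{\vd+1,\vd}$ (legitimate by the degree-reduction lemma), build the Hilbert space from words of length $\le\vd+1$, and let $X_j$ be the projection of the shift $w\mapsto x_jw$ back onto the degree-$\le\vd$ part.

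Your boundedness argument via $\lambda(q^*(N^2-x_j^2)q)\ge0$ is not available. That element has degree $2\vd+2$, and since $N^2-x_j^2$ needs weights of degree one, it is not in the cone on which $\lambda$ is nonnegative. What works is the numerical-range bound $|\lambda(q^*x_jq)|\le c\,\lambda(q^*q)$ for $q\in\cA_{\vd}$, together with self-adjointness and polarization. This bound follows from $c\pm x_j\in\wtC_{\vd}$, which comes from boundedness of $\cD_L$ plus the ucp-map characterization of the inclusions $\cD_L\subseteq\cD_{I\mp x_j/c}$.

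The same bound shows null vectors are harmless, so no strictly positive perturbation of $\lambda$ is needed. The one you suggest, read as $p\mapsto\sum_w\omega(P_{w^*w})$ with $\omega$ a faithful normal state, is not even nonnegative on squares: $r=1-x_1^2-x_2^2$ gives $-1$.

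Finally, $L(X)\succeq0$ on all of $\cH\otimes\cE$, which the finite-dimensional compression step needs, requires density of $\{q(X)\gamma: q\in\cA_{\vd}\}$. Positivity on ``the relevant subspace'' is not enough. The paper checks this density using rank-one coefficients.
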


		\begin{remark} \rm
			Several remarks related to Theorem \ref{t:main1} are in order.
			
			\begin{enumerate}[\rm (a)]\itemsep=5pt
				\item Theorem \ref{t:main1} is stated and proved under the assumption that $\cH$ is infinite  dimensional, so the finite-dimensional cases are not obtained by a direct specialization of Theorem \ref{t:main1}.\looseness=-1
				
				\item Nevertheless, when both $\cH$ and $\tcH$ are finite-dimensional, one recovers \cite[Theorem 1.1]{HKM12} after an additional argument; see Theorem~\ref{c:HKM12} item~\ref{i:cHKM:iii}. Likewise, when $\cH$ is finite-dimensional and $\tcH$ is arbitrary, one recovers \cite[Theorem 1.5]{Zal17}; {see Theorem~\ref{c:HKM12}\ref{i:cHKM:i}.}
				
				\item No assumption is imposed on the dimension of $\tcH$ in Theorem \ref{t:main1}. However, if $\tcH$ is finite-dimensional, then one can obtain the sharper representation
				\[
				p \ =\ r^*r+q^*(I_\cE\otimes L)q,
				\]
				for an auxiliary Hilbert space $\cE$; see Theorem~\ref{c:HKM12}\ref{i:cHKM:ii}.
				
				\item Theorem \ref{t:main1} is proved in Section \ref{sec:proofoft1}. 
				The following reformulation of Theorem \ref{t:main1} is convenient.
				Let $\wtC_{\vd,L}$ denote the \df{cone of weighted squares} of polynomials of degree at most $\vd,$
				\begin{equation*} 
					\wtC_{\vd,L} \ :=\  \{\, r^{*}r + q^{*} \pi(L) q \ : \  r,q \in \cA_{\vd},\  \pi \in \UCP(\oS_{L},  B(\cH))\,\} \ \subset \ \cA_{2\vd+1}.
				\end{equation*}
				With this notation, Theorem~\ref{t:main1} says if $p\in \cA_{2\vd+1},$ then $p\succeq0$ on  $\cD_L$ if and only if $p\in \wtC_{\vd,L}.$
			\end{enumerate}
		\end{remark}
		
		\def\bG{\mathbb G}
		
		\subsubsection{Free products of finite abelian groups} 
		Fix a positive integer \(m\). 
		Let
		\[
		\groupW \ =\  \mathbb{G}_{1}\ast \mathbb{G}_{2} \ast \cdots \ast \mathbb{G}_{m},
		\]
		be the free product of finite abelian groups \(\mathbb{G}_{1},\dots,\mathbb{G}_{m}\).
		
		\smallskip
		
		Every nontrivial \(w \in \groupW\) admits a unique representation as a reduced word, i.e., $w$ is of the form
		\[
		w \;=\; g_1 g_2 \cdots g_k,
		\]
		where, for each \(\ell=1,\dots,k\), one has \( g_\ell \in \mathbb{G}_{i_\ell}\setminus\{e\}, \)
		and consecutive letters come from different factors, that is,
		$
		i_\ell \neq i_{\ell+1}, \; \ell=1,\dots,k-1.
		$
		The \df{extent} of \(w\) is \(k\).

		\smallskip
		
		Let $\cE$ be any separable (finite- or infinite-dimensional) Hilbert space. An element $p$ of $  B(\cE) \otimes \C[\groupW]$ is an \df{operator-valued polynomial} of the form \eqref{eq:poly} with $P_{w} \in  B(\cE)$ and each $w \in \groupW.$
		The \df{extent} 
		of $p$ is the largest extent of a (reduced) word appearing in the sum in equation~\eqref{eq:poly}. 
		
		\smallskip
		
		There is a natural \df{involution} \df{${}^\ast$} on $\groupW.$ On a word $w=g_{1} g_{2}\cdots g_{k} \in \groupW,$
		\[
		w^{*} \ =\ g_{k}^{-1} \cdots g_{2}^{-1} g_{1}^{-1}.
		\]
		This involution extends to
		$  B(\cE) \otimes \C[\groupW]$ by linearity,
		\[
		p^* \ =\  \sum P_w^*\, w^*,
		\] 
		where $P_{w}^{*}$ is the adjoint of the operator $P_{w}$ in $ B(\cE)$ and doing so makes $  B(\cE) \otimes \C[\groupW]$ a $\ast$-algebra.  A polynomial $p$ is \df{hermitian} if $p^*=p.$ In particular, if $p\in  B(\cE) \otimes \C[\groupW],$ then $p+p^*$ is hermitian.
		
		\smallskip
		
		Given a unitary representation \(\rep\) of \(\groupW\) on a Hilbert space, the \df{evaluation} of a polynomial \(p \in B
		(\cE) \otimes \C[\groupW]\) at \(\rep\) is defined by
		\[
		p(\rep)\ :=\ \sum_{w\in \groupW}^{\text{\tiny finite}} P_w \otimes \rep(w).
		\]
		Note that
		\[
		p(\rep)^*
		\ =\ 
		\sum_{w\in \groupW}^{\text{\tiny finite}} P_w^* \otimes \rep(w)^*
		\;=\;
		p^*(\rep).
		\]
		Let \(\Pi(\groupW)\) denote the class of all unitary representations of \(\groupW\) on separable Hilbert space. A polynomial \(p\) is called \df{positive}, written \(p \succeq 0\), if \(p(\rep)\) is positive semidefinite for every \(\rep \in \Pi(\groupW)\). Thus \(  B(\cE) \otimes \C[\groupW]\) becomes an ordered \(*\)-algebra. Moreover, \(p\) is hermitian if and only if \(p(\rep)\) is hermitian for every \(\rep \in \Pi(\groupW)\).
		
		\smallskip

		\smallskip 
		
		The  second main result, Theorem \ref{t:main2}, 
		is a noncommutative Fejér--Riesz theorem and 
		provides a sum-of-squares representation for positive operator-valued trigonometric polynomials on a free product of finite abelian groups.
		It generalizes \cite{KLM} (cf.~\cite{NT13}) and 
		identifies extent as the appropriate notion of complexity for optimal positivity certificates.
		Its proof is given in Section \ref{s:WS} as a corollary of Theorem \ref{t:main1} and Boca's theorem \cite{Boc91}.
		The argument follows an outline 
		generously shared with us by Mehta-Slofstra-Zhao \cite{MSZ} (see also \cite{MSZ23}) 
		adapted to handle  the operator, as opposed to scalar, coefficients appearing here.
		
		\begin{thmA}\label{t:main2}
			Let $\cE$ be any separable (finite or infinite-dimensional) Hilbert space. If $p\in  B(\cE)\otimes \C[\groupW]$ is a polynomial of  extent $\vd,$ then the following are equivalent:
			\begin{enumerate}[\rm (i)]\itemsep=6pt
				\item For any $\tau \in \Pi(\groupW),$ $p (\tau) \succeq 0;$
				\item\label{item:tsos} There exist a positive integer $N$  and polynomials $q_{1}, \ldots, q_{N} \in  B(\cE) \otimes \C[\groupW]$ of extent at most $\lfloor \frac{\vd}{2} \rfloor + 1$ such that 
				\begin{equation}\label{eq:tsos}
					p \ =\ \sum\limits_{i=1}^{N} q_{i}^{*} q_{i}.
				\end{equation}
			\end{enumerate}
		\end{thmA}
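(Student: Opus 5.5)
The direction (ii)$\Rightarrow$(i) is immediate: $q_i^*q_i$ evaluated at $\tau$ equals $q_i(\tau)^*q_i(\tau)\succeq 0$. For (i)$\Rightarrow$(ii) we reduce to Theorem~\ref{t:main1} by encoding representations of $\groupW$ as points of a bounded matrix convex set defined by a linear pencil.

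\emph{Encoding.} Represent each finite abelian group $\mathbb G_i$ through its dual. A unitary representation of $\mathbb G_i$ is the same as a projection-valued measure $\{Q_{i,\chi}\}_{\chi\in\widehat{\mathbb G_i}}$, i.e.\ a PVM with $n_i=|\mathbb G_i|$ outcomes, and $\tau(g)=\sum_\chi \chi(g)Q_{i,\chi}$. Dropping one outcome per factor, the variables are $x=(x_{i,\chi})$ for $\chi\neq 1$. Let $L$ be the direct sum over $i$ of the diagonal pencils expressing $x_{i,\chi}\succeq 0$ and $I-\sum_{\chi\neq1}x_{i,\chi}\succeq 0$, so that $\cD_L$ is the bounded matrix convex set of tuples of POVMs, one per factor. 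Each $g\in\mathbb G_i$ becomes an affine polynomial $\ell_g(x)=\sum_\chi\chi(g)x_{i,\chi}$ (with $x_{i,1}=I-\sum x_{i,\chi}$), and a reduced word of extent $k$ becomes a product of $k$ affine factors, hence has degree $k$. Thus $p$ lifts to some $\tilde p\in B(\cE)\otimes\C\la x\ra$ of degree $\vd$; if $\cE$ is finite-dimensional, first tensor with an infinite-dimensional space or embed $B(\cE)$ corner-wise. Using $x_{i,\chi}x_{i,\chi'}=\delta x_{i,\chi}$ we may choose the lift $\tilde p$ to avoid adjacent letters from the same factor.

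\emph{Positivity on POVMs (main obstacle).} Condition (i) only gives positivity on PVMs, while Theorem~\ref{t:main1} requires $\tilde p(X)\succeq0$ for all POVM tuples $X\in\cD_L$. Here we would use Naimark dilation in each factor separately: a POVM $X_i$ on $\C^n$ dilates to a PVM $P_i$ on a larger space $\cK$ with $X_i=V^*P_iV$ for a common isometry $V$ (each dilation is arranged on the same space $\cK$ by padding). We want $\tilde p(X)=V^*\tilde p(P)V$ for the lifted alternating polynomial. This compression identity fails for products in general, which is where Boca's theorem enters. The maps $\phi_i:C^*(\mathbb G_i)\to M_n$, $\phi_i(g)=\ell_g(X_i)$, are ucp, and Boca's theorem gives a ucp map $\phi$ on the full free product $C^*(\groupW)$ that acts multiplicatively on alternating reduced words, $\phi(g_1\cdots g_k)=\phi_{i_1}(g_1)\cdots\phi_{i_k}(g_k)$. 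This requires the mean-zero centering with respect to the trace state. To ensure $\tilde p(X)=(\mathrm{id}\otimes\phi)(p)$ exactly, we define the lift via the centered variables $g-\text{trace part}$, so that reduced words map to alternating products of centered elements. Stinespring dilation of $\phi$ then yields a representation $\tau$ with $\tilde p(X)=(I\otimes V)^*p(\tau)(I\otimes V)\succeq0$.

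\emph{Certificate and pushing back.} Since $\tilde p$ has degree $\vd\le 2\lfloor\vd/2\rfloor+1$, Theorem~\ref{t:main1} gives $\tilde p=r^*r+q^*\pi(L)q$ with $\deg q,\deg r\le\lfloor\vd/2\rfloor$. Now substitute each $x_{i,\chi}\mapsto Q_{i,\chi}$, its spectral projection in $\C[\mathbb G_i]$; this is a $*$-homomorphism $\C\la x\ra\to\C[\groupW]$ sending $\tilde p\mapsto p$. A polynomial of degree $d$ maps to extent at most $d$. The diagonal entries of $L$ map to the projections $Q_{i,\chi}=Q_{i,\chi}^*Q_{i,\chi}$, which are squares of extent $1$, and $\pi(L)$ maps to $\sum_{i,\chi}\pi(E_{i,\chi})\otimes Q_{i,\chi}$ with $\pi(E_{i,\chi})\succeq0$. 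Factoring $\pi(E_{i,\chi})=S^*S$ gives $q^*\pi(L)q=\sum(SQ_{i,\chi}q)^*(SQ_{i,\chi}q)$, whose factors have extent at most $\lfloor\vd/2\rfloor+1$. The sum is an infinite sum of squares indexed by a basis of $\cH$; it is collapsed to finitely many $q_i$ by combining terms through an isometry $\cH\otimes\ell^2\cong\cH$, which is available because the coefficients are operators. This yields \eqref{eq:tsos}.
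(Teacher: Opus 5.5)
Your proposal is correct and is essentially the paper's proof: your word-by-word affine lift $\tilde p$ is exactly $\Omega(p)$ for the Boca map $\Omega:\C[\groupW]\to\povm(\underline{n})$ of Proposition~\ref{p:boca-appied} and Remark~\ref{r:boca-applied}, and substituting spectral projections is the splitting homomorphism $s$ of Lemma~\ref{cl:2}, with your factors $S\,Q_{i,\chi}\,s(q)$ playing the role of $\ch_{i,j,k}=\cp_{i,j}\,s(f_{i,j,k})$; the only real difference is that you verify positivity of the lift pointwise (Boca's theorem into $M_n$ followed by Stinespring), whereas the paper obtains it from complete positivity of $\Omega$ into the universal algebra. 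Two small repairs are needed: your pencil is not monic, so Theorem~\ref{t:main1} has to be invoked through the affine change of variables of Proposition~\ref{prop:aff} (with $\pi$ extended to the diagonal algebra so that $\pi(E_{i,\chi})\succeq 0$ makes sense), and the resulting sum over $(i,\chi)$ is already finite, so for finite-dimensional $\cE$ what you need is the finite-rank splitting of Theorem~\ref{c:HKM12} to obtain $B(\cE)$-valued coefficients, not a collapse through $\cH\otimes\ell^2\cong\cH$.
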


		\begin{remark}\rm \label{r:factor}
			The following remarks concern Theorem \ref{t:main2}.
			
			\begin{enumerate}[\rm (a)]\itemsep=5pt
				\item Item \ref{item:tsos} can also be phrased as a factorization result. Letting $q = {\rm col} (q_{1}, \cdots, q_{N}) \in  B(\cE, \cE^{N}) \otimes \C[\groupW],$ \eqref{eq:tsos} simply states 
				\[
				p \ =\ q^{*}q.
				\]
				In particular, if $\cE$ is infinite-dimensional, then $N$ in \eqref{eq:tsos} can be chosen to be one.
				
				\smallskip
				
				In the case that $\cE$ is finite dimensional the bound $N$ on the number of summands in \eqref{eq:tsos}
				can be chosen at most $(\dim \cE)\, (\sum_{j=1}^m |\bG_j| ) \, N(\vd).$  In particular, it depends only on the
				degree of $p,$ the dimension of $\cE$ and the cardinalities of the $\bG_j.$ 
				
				\item
				The proof of Theorem~\ref{t:main2} reduces to the case of finite cyclic groups, $\bG_i = \Z_{n_i}.$ (See Subsection~\ref{ssec:abel2cyc}.)
				In that case there is a natural notion of degree for polynomials based on placing the shortlex order on reduced words:
				given a word $w,$ a polynomial has degree at most $w$ if it is a $ B(\cH)$-combination of words $u^{-1}v$ where 
				$u,v\le w$ 
				and has {\it analytic degree} 
				at most $w$ if it is a linear combination of words of length at most $w.$
				A slightly stronger version of Theorem~\ref{t:main2} was proved in \cite{KLM} for the case where each $\bG_j=\Z_2,$
				in that a priori optimal bounds are obtained.
				Namely, if $p$ has degree at most $w,$ then $p=q^*q,$ for some $q$ with analytic degree at most $w.$ (Here $u^*=u^{-1}$ for a word $u$.)
				Since, in this case, degree at most $w$ implies extent at most $|w|$ (the length of $w$), it follows 
				that if $p$ has extent at most $2d,$ then it factors as $p=q^*q$ where $q$ has extent at most $d.$
				In general, such simple bounds fail; for instance, see \cite[Example 8.1]{KLM} for $\Z_2 * \Z_3$,
				where it is shown, letting $x$ denote the generator of $\Z_2$ and $y$ 
				a generator of $\Z_3,$ there is a polynomial of degree $y$ that does not factor as $q^*q$
				for a $q$ of analytic degree at most $y.$

				\item 
				A scalar-valued variant of Theorem \ref{t:main2} was suggested in \cite[Section 6]{NT13}
				and proved in \cite[Theorem 3.2.1]{GC23}, but to the best of our knowledge, these proofs 
				do not give rise to {\it bounds} in \eqref{eq:tsos} 
				nor do they extend to operator-valued coefficients. 
			\end{enumerate}
		\end{remark}
		
		\subsection{What's new} \label{ss:new}
		\mbox{}\par
		(1)  {Recall that, in Theorem~\ref{t:main1}, the polynomials and the pencil $L$  have coefficients in  $ B({\cH})$ and $ B(\tcH)$
			respectively.} The special case where 
		both $\cH$  and $\tcH$ are finite-dimensional is due to \cite{HKM12}, and its generalization to infinite-dimensional $\tcH$ 
		but still finite-dimensional $\cH$
		is given in \cite{Zal17}.
		While our proof of Theorem \ref{t:main1} follows the now-standard sum-of-squares (sos) strategy, the passage to  coefficients in $ B(\cH)$ with $\cH$ infinite-dimensional introduces new difficulties.\looseness=-1
		
		\smallskip
		
		Suppose that a polynomial {$p\in \cA_{2\vd+1}$} 
		does not belong to the 
		corresponding cone of weighted squares {$\wtC_{2\vd,L}$}. The Hahn--Banach separation theorem yields a linear functional $\varphi$ that is nonnegative on that cone and negative on $p$. 
		In the present 
		operator-valued setting the relevant closedness, and thus separation, is  only available in the product ultraweak topology. 
		To prove this closedness, we use a canonical tuple $A$ arising from truncated left creation operators on Fock space, together with coefficient-extraction estimates and uniform control of Gram-type representations. Further, additional structure arising from completely positive (cp) maps and topologies on spaces of cp maps 
		is utilized.\looseness=-1
		
		\smallskip
		
		{The topological considerations and the need to consider cp maps}  make the representation-theoretic GNS step more delicate: boundedness of the
		resulting representing tuple is no longer automatic.
		To overcome this, we develop a GNS construction adapted  to such
		ultraweakly continuous separating functionals, where the boundedness of the representing tuple is established within the construction itself. We first do this when $\cD_L$ is bounded,
		using the structure of the cone and coefficient-extraction estimates, and then reduce the
		general unbounded case to the bounded one by approximation.
		In this way, the GNS construction produces a
		tuple {$Y\in \cD_L$}  and a representing vector $\gamma$ such that
		\[
		\varphi(p) \ =\  \langle p(Y)\gamma,\gamma\rangle<0,
		\]
		certifying $p$ is not positive on $\cD_L.$

		\smallskip
		
		(2)
		{The second part of the paper employs  Theorem~\ref{t:main1} to establish a ``perfect'' Positivstellensatz 
			for the universal $\ast$-algebra $\mathrm{povm}(n)$ associated with POVMs on $\{1,2,\ldots,n\}$
			(See Theorem \ref{t:posforpovmm}). 
			Since a group $C^*$-algebra $\C[G]$ of a finite abelian group  $G$ depends only on the order $|G|$ of the group $G$, it suffices to prove Theorem \ref{t:main2} for a free product of finite cyclic groups.
			Free product methods}
		and Boca's theorem \cite{Boc91} are then invoked to obtain the desired sum-of-squares representation 
		for positive operator-valued polynomials on free products of finite cyclic groups.
		
		\subsection{Reader's guide}
		
		The paper is structured as follows. Section \ref{sec:prelim} collects the preliminary material used throughout the paper. In particular, we recall some results about completely positive maps, introduce the cones of weighted squares (appearing in item
		\ref{eq:wsos} of Theorem \ref{t:main1}), and review the Fock-space coefficient-extraction machinery from \cite{JKM25}. In Section \ref{sec:top} we introduce the product weak operator and product ultraweak topologies on $\cA_{\ad}$ and prove the closedness of the cones of weighted squares in the product ultraweak topology, see Proposition \ref{prop:closedcone}. The fact that the cone is closed allows for an application of the Hahn--Banach Separation Theorem.
		
		\smallskip
		
		Section \ref{sec:GNS} contains a GNS-type construction. Starting from an ultraweakly continuous linear functional (obtained from the Hahn--Banach separation) that is nonnegative on the cone of weighted squares, we construct a Hilbert space, a self-adjoint operator tuple, and a cyclic vector realizing the functional by evaluation. This construction is the main representation-theoretic ingredient in the proof of Theorem \ref{t:main1}. Section \ref{sec:proofoft1} then proves Theorem~\ref{t:main1} by combining the closedness results of Section \ref{sec:top} with the GNS construction from Section \ref{sec:GNS} and a finite-dimensional compression argument. In Section~\ref{sec:fd}, we explain how Theorem \ref{t:main1} recovers, as special cases, earlier results of Helton--Klep--McCullough \cite{HKM12} and Zalar \cite{Zal17}. 
		
		\smallskip
		
		Section \ref{sec:aff} treats non-monic linear pencils by an affine linear change of variables, and then applies this framework to a special linear pencil that is used later in the proof of Theorem~\ref{t:posforpovmm}. Section \ref{sec:povm} introduces the $*$-algebra $\povm(\underline n)$, develops its free-product structure, and proves a perfect Positivstellensatz for its positive elements. Finally, Section \ref{s:WS} combines this Positivstellensatz and  Boca's theorem  to prove Theorem~\ref{t:main2} for operator-valued polynomials on free products of finite abelian groups.

		\section{Preliminaries}\label{sec:prelim}  
		We combine two classical results about cp maps,
		namely the Arveson Extension Theorem and Stinespring Dilation Theorem, in a form that we will use repeatedly throughout the paper. 
		For a more detailed discussion and for the proofs, we refer the reader to \cite{D25, Pau03}.

		\begin{thm}[Stinespring-Arveson] \label{thm:Arv-Stine}
			If $\oS \subset \oC$ is an operator system contained in a $C^*$-algebra $\oC,$ if 
			$\cE$ is a Hilbert space, and if  $\pi :\oS\to  B(\cE)$ 
			is a cp map, then there is a Hilbert space $\cF,$ a $*$-representation $\tau: C^*(\oS) \to  B(\cF)$ and a bounded linear map $T: \cE\to\cF$ such that $\pi (a) = T^* \tau (a) T,$ where $C^*(\oS)$ is the $C^*$-algebra generated by $\oS.$  Moreover, if $\pi$ is ucp, then $T$ is an isometry; and  if $\oS$ is finite  dimensional and $\cE$ is separable, then $\cF$ 
			can be chosen separable as well.
		\end{thm}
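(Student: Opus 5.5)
The plan is to combine Arveson's extension theorem with Stinespring's dilation theorem, and then to track separability through the construction.

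\emph{Extension.} Since $\oS\subset\oC$ is an operator system and $\pi:\oS\to B(\cE)$ is completely positive, Arveson's extension theorem gives a cp map $\tilde\pi:\oC\to B(\cE)$ extending $\pi$ with $\|\tilde\pi\|_{cb}=\|\pi\|_{cb}=\|\pi(I)\|$. Restrict $\tilde\pi$ to the $C^*$-algebra $C^*(\oS)$. This restriction is a cp map on a unital $C^*$-algebra that agrees with $\pi$ on $\oS$.

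\emph{Dilation.} Apply Stinespring's theorem to $\tilde\pi|_{C^*(\oS)}$. Concretely, equip the algebraic tensor product $C^*(\oS)\odot\cE$ with the sesquilinear form
\[
\langle a\otimes h,\,b\otimes k\rangle = \langle \tilde\pi(b^*a)h,k\rangle .
\]
This form is positive semidefinite by complete positivity. Quotient by its null space and complete to obtain $\cF$. Let $\tau(c)$ act by left multiplication, $\tau(c)(a\otimes h)=ca\otimes h$. Define $T h = 1\otimes h$. Then
\[
\|Th\|^2=\langle\pi(I)h,h\rangle\le\|\pi(I)\|\,\|h\|^2,
\]
so $T$ is bounded. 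Moreover $T^*\tau(a)T=\tilde\pi(a)$, and this equals $\pi(a)$ for $a\in\oS$. Boundedness of $\tau(c)$ is the standard estimate $c^*c\le\|c\|^2 I$ in $C^*(\oS)$, pushed through $\tilde\pi$.

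\emph{Unital case.} If $\pi$ is ucp, then $T^*T=\pi(I)=I_\cE$, so $T$ is an isometry.

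\emph{Separability.} This is the only point needing a little care. Assume $\oS$ is finite-dimensional and $\cE$ is separable. Then $C^*(\oS)$ is generated by finitely many elements, namely a basis of $\oS$. Hence noncommutative $*$-polynomials in these generators with rational-complex coefficients form a countable dense subset $D\subset C^*(\oS)$. Fix a countable dense subset $E\subset\cE$. The vectors $a\otimes h$ with $a\in D$ and $h\in E$ form a countable set. Their images are dense in $\cF$, because of the estimate
\[
\|a\otimes h\|_\cF\le\|\tilde\pi\|^{1/2}\,\|a\|\,\|h\| ,
\]
which makes the map $(a,h)\mapsto[a\otimes h]$ jointly continuous. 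Therefore $\cF$ is separable. No step is a real obstacle; the separability bookkeeping is the only non-citation part.
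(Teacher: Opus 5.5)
Your proposal is correct and takes the same approach as the paper. The paper simply combines Arveson's extension theorem with Stinespring's dilation, citing the literature, and remarks that $\cF$ can be chosen separable because $C^*(\oS)$ and $\cE$ are separable; your argument fills in the details of that remark, with one small wording fix: it is the linear span of the classes $[a\otimes h]$, $a\in D$, $h\in E$, that is dense in $\cF$, not the set itself, and that is exactly what separability requires.
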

		
		\begin{remark} \rm 
			A separable choice of $\cF$ is possible since  $C^*(\oS)$ and $\cE$ are both separable. In fact, the conclusions of Theorem~\ref{thm:Arv-Stine} hold
			with $\oS$ replaced by any operator system $\oS^\prime\supset \oS$ for which $C^*(\oS^\prime)$ is separable.
		\end{remark}

		\subsection{Convex cone of weighted squares}
		\label{ssec:cones}
		Index $\mathbb{\cH}^{N(\vd)}$ and $\cA_{d}^{N(\vd)}$ (the algebraic direct sum of $\cA_{\vd}$ with itself $N(\vd)$ times) by $\la x \ra_{\vd}.$ 
		For positive integers $\mu,$  let  $V_{\mu} \in \cA_{\vd}^{N}$ denote the \df{Veronese column vector} whose $w\in \la x \ra_{\vd}$ entry is $w$ (adopting the  usual convention of viewing $w$ as the $ B(\cH)$-valued polynomial $I_\cH \, w$).  
		For instance, if $\vg=2$ and $\vd=2$, then  \index{$V_\mu$} 
		\[
		V_2\ =\  \text{col} \begin{pmatrix}
			1 & x_1 & x_2 & x_1^2 & x_1 x_2 & x_2 x_1 & x_2^2
		\end{pmatrix}.
		\]
		Let \df{$\Sigma_\vd$} denote the \df{cone of squares} of polynomials of degree at most $\vd,$
		\begin{equation}\label{eq:sosdef}
			\Sigma_{\vd} \ :=\  \{\, r^{*} r\ : \quad r\in \cA_{\vd}\,\}
			\ \subset \ \cA_{2\vd}.
		\end{equation}
		Given $r\in \cA_\vd,$ the row vector  $R$  with $w$ entry $R_w$ 
		is called the \df{coefficient vector} of $r$ since  $r = R V_{\vd}.$ In particular, 
		\[
		r^* r \ =\  V_\vd^* R^* R  V_\vd
		\]
		so that  $r^*r$ has a representation  as  $V_\vd^* \SG V_\vd$ for a psd block matrix $\SG.$ It was proved in \cite[Proposition 2.2]{JKM25} (see also \cite[Remark 2.4]{JKM25}) that $\Sigma_{\vd}$ is a convex cone. 
		
		\smallskip
		
		We introduce two new cones of weighted squares. Let $\wtC_{\vd,L}$ denote the \df{cone of weighted squares} of polynomials of degree at most $\vd,$
		\begin{equation} \label{eq:wsosdef}
			\wtC_{\vd,L} \ :=\  \{\, r^{*}r + q^{*} \pi(L) q \ :\  r,q \in \cA_{\vd},\ \pi \in \UCP(\oS_{L},  B(\cH))\,\} \ \subset \ \cA_{2\vd+1},
		\end{equation}
		and 
		\begin{equation} \label{eq:wsosdef2}
			\wtC_{\vd+1,\vd,L} \ :=\  \{\, r^{*}r + q^{*} \pi(L) q\ : \  r\in \cA_{\vd+1}, \  q \in \cA_{\vd}, \  \pi \in \UCP (\oS_{L},  B(\cH))\,\} \ \subset\  \cA_{2\vd+2}.
		\end{equation}
		It is clear that $\wtC_{\vd,L} \subset \wtC_{\vd+1,\vd,L}.$ We will suppress the subscript $L$ when it is clear from the context.

		\begin{prop}
			The cones of weighted squares $\wtC_{\vd}$ and $\wtC_{\vd+1,\vd}$ defined in \eqref{eq:wsosdef} and \eqref{eq:wsosdef2}, respectively, are closed under addition. 
		\end{prop}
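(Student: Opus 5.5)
The plan is to exploit that $\cH$ is infinite-dimensional, so that $\cH\oplus\cH\cong\cH$, and to merge two certificates into one by means of a pair of Cuntz-type isometries. Fix isometries $S_1,S_2\in B(\cH)$ with
\[
S_i^*S_j \ =\ \delta_{ij} I_\cH, \qquad S_1S_1^*+S_2S_2^* \ =\ I_\cH .
\]
Such isometries exist because $\cH$ is separable and infinite-dimensional. Take two elements of the cone,
\[
p_k \ =\ r_k^*r_k+q_k^*\pi_k(L)q_k, \qquad k=1,2.
\]
For $\wtC_{\vd}$ the $r_k,q_k$ lie in $\cA_\vd$. For $\wtC_{\vd+1,\vd}$ we have $r_k\in\cA_{\vd+1}$ and $q_k\in\cA_\vd$. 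In both cases $\pi_k\in\UCP(\oS_L,B(\cH))$.

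Define the combined data by
\[
r \ =\ S_1r_1+S_2r_2,\qquad q \ =\ S_1q_1+S_2q_2,\qquad \pi(a) \ =\ S_1\pi_1(a)S_1^*+S_2\pi_2(a)S_2^* \quad (a\in\oS_L).
\]
Left multiplication by a fixed operator $S_k$ multiplies each coefficient $R_w$ by $S_k$. It does not change the degree, so $r$ and $q$ lie in the same $\cA_{\vd+1}$ or $\cA_\vd$ as before. The map $\pi$ is completely positive, since it is a sum of compressions of cp maps. It is unital, because $\pi(I)=S_1S_1^*+S_2S_2^*=I_\cH$.

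The computation uses that the variables $x_j$ commute with the operator coefficients in $\cA=B(\cH)\otimes\C\la x\ra$. Hence $(S_kq_k)^* = q_k^*S_k^*$ and
\[
\pi(L) \ =\ S_1\pi_1(L)S_1^*+S_2\pi_2(L)S_2^*.
\]
Using $S_i^*S_j=\delta_{ij}I$, the cross terms vanish. This gives
\[
r^*r \ =\ r_1^*r_1+r_2^*r_2
\]
and
\[
q^*\pi(L)q \ =\ \sum_{i,j,k} q_i^*S_i^*S_k\pi_k(L)S_k^*S_jq_j \ =\ q_1^*\pi_1(L)q_1+q_2^*\pi_2(L)q_2 .
\]
Therefore $p_1+p_2=r^*r+q^*\pi(L)q$ lies in the same cone.

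There is no real obstacle in this argument. The only points to verify with care are the existence of the isometries, which uses that $\dim\cH=\infty$, and the commutation of operator coefficients with the letters $x_j$ in the algebra $\cA$.
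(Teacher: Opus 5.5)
Your proposal is correct and is essentially the paper's proof. The paper uses a unitary $U:\cH\oplus\cH\to\cH$ with $q=U\begin{bmatrix} q_1\\ q_2\end{bmatrix}$ and $\pi=U(\pi_1\oplus\pi_2)U^*$, and your Cuntz isometries $S_1,S_2$ are just the two columns of that $U$. The only small difference is that you also merge the $r_k^*r_k$ terms with the same isometries, whereas the paper handles only the weighted terms and leaves the sum-of-squares part to the known fact that $\Sigma_\vd$ is a convex cone.
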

		
		\begin{proof}
			We establish the result for $\wtC_\vd,$ the case of $\wtC_{\vd+1,\vd}$ being similar.
			
			\smallskip
			
			It is enough to show that 
			\[
			q_{1}^{*} \pi_{1}(L) q_{1} + q_{2}^{*} \pi_{2}(L) q_{2}\ \in\ \wtC_{\vd}
			\]
			for any $q_{1},q_{2} \in \cA_{\vd}$ and $\pi_{1},\pi_{2} \in \UCP(\oS_{L},  B(\cH)).$ 
			Because $\cH$ is infinite  dimensional there is a unitary $U: \cH \oplus \cH \to \cH.$
			The result now follows from the identity, %
			\begin{align*}
				q_{1}^{*} \pi_{1}(L) q_{1} + q_{2}^{*} \pi_{2}(L) q_{2}
				\ =&\ \begin{bmatrix} q_{1}^{*} & q_{2}^{*} \end{bmatrix} \ \begin{bmatrix} \pi_{1} (L) & 0 \\ 0 & \pi_{2}(L)\end{bmatrix} \ \begin{bmatrix} q_{1} \\ q_{2} \end{bmatrix} \\
				\ =&\ \begin{bmatrix} q_{1}^{*} & q_{2}^{*} \end{bmatrix} \ U^{*}U\ \begin{bmatrix} \pi_{1} (L) & 0 \\ 0 & \pi_{2}(L)\end{bmatrix}\ U^{*}U \ \begin{bmatrix} q_{1} \\ q_{2} \end{bmatrix} \\
				\ =&\ q^{*} \pi(L) q,
			\end{align*}
			where $q = U \begin{bmatrix}
				q_{1} \\ q_{2}
			\end{bmatrix} \in \cA_{\vd}$ and $\pi = U \begin{bmatrix}
				\pi_{1} & 0 \\ 0 & \pi_{2}
			\end{bmatrix} U^{*} \in \UCP(\oS_{L},  B(\cH)).$  %
		\end{proof}
		
		\subsection{Fock space and coefficient extraction}\label{ssec:fock}
		This subsection reviews the full Fock space and the creation operators, introduces their symmetrized versions in \eqref{eq:symcreate} (compressed to a suitable finite-dimensional subspace), and summarizes relevant results from \cite[Section 3]{JKM25}.   One difference here is that we
		(implicitly) work with a scaled version of the creation operators. See equation~\eqref{eq:scaleA}.
		
		\smallskip
		
		The full Fock space can be defined over any Hilbert space. The \df{full Fock space} over $\mathbb{C}^{\vg},$ denoted \df{$\cF^2_{\vg}$},  is: 
		\[
		\cF^{2}_{\vg} \ =\  \bigoplus_{n=0}^{\infty} (\mathbb{C}^{\vg})^{\otimes n},
		\]
		where $(\mathbb{C}^{\vg})^{\otimes 0} := \mathbb{C}$ represents the \df{vacuum vector} \df{$\Omega$}.
		Thus elements of $\cF^{2}_{\vg}$ are sequences $(\psi_{0}, \psi_{1}, \psi_{2}, \dots )$ with $\psi_{n} \in (\mathbb{C}^{\vg})^{\otimes n}$ and $\| (\psi_{0}, \psi_{1}, \psi_{2}, \dots ) \|^{2} = \sum\limits_{n=0}^{\infty} \|\psi_{n}\|^{2} < \infty.$
		
		\subsubsection{Left creation operators} 
		Let $\{e_{1}, \dots, e_{\vg}\}$ be any orthonormal basis of $\mathbb{C}^{\vg}.$ With any $w = x_{i_{1}} \dots x_{i_{n}} \in \la x\ra,$ associate a vector 
		\[
		e_{w} \ =\  e_{i_{1}} \otimes \dots \otimes e_{i_{n}} \in (\mathbb{C}^{\vg})^{\otimes n}.
		\]
		The set $\{e_{w}: w \in \la x\ra \}$ forms an orthonormal basis for $\cF^{2}_{\vg}$, with $e_{\varnothing}$ corresponding to the vacuum vector $\Omega.$
		
		\smallskip
		
		For each $j =1, \dots, \vg,$ define the \df{left creation operator}  \df{$C_j$} on $\cF^{2}_{\vg}$ by 
		\begin{equation}\label{eq:create}
			C_{j} (e_{w}) \ =\  e_{x_{j} w} \in (\mathbb{C}^{\vg})^{\otimes (|w|+1)}, \quad (w \in \la x\ra).
		\end{equation}
		Clearly, each $C_{j}$ is an isometry. Moreover $C_{i}^{*} C_{j} = 0 $ if $i \neq j.$ 
		
		\smallskip
		
		Fix a positive integer $\ad\ge 2\vd+2.$ \index{$\ad$}  Let $\Ftgd$ denote the subspace of $\cF^{2}_{\vg}$ spanned by 
		$\{e_{w}: w \in \la x\ra_{\ad} \}$
		and $\iota=\iota_\ad:\Ftgd\to \cF^{2}_{\vg}$ the inclusion. Thus, for instance, for $|w|\le \ad,$
		\begin{equation*}
			\iota^*C_j \iota e_w \ =\  \iota^* C_j e_w \ =\ \begin{cases} e_{x_j w} & \text{ if } |w|<\ad \\
				0 & \text{ if } |w|=\ad. \end{cases}
		\end{equation*}
		Similarly, if $|v|\le \ad,$ then 
		\begin{equation*}
			\iota^* C_j^* \iota e_v \ =\  \iota^*C_j^* e_v \ =\  \begin{cases}  e_u & \text{ if } v=x_j u 
				\\ 0 & \text{otherwise}. \end{cases} 
		\end{equation*}
		Let \df{$A$} $= (A_{1}, \cdots, A_{\vg})$ be defined as
		\begin{equation}
			\label{eq:symcreate}
			A_j \ =\  \iota^* (C_j + C_j^*) \iota.
		\end{equation}
		
		Because
		\[
		L(tA) \ =\ I+t\sum_{j=1}^{\vg}\bA_j\otimes A_j \ \succeq\  \frac12 I
		\]
		for all sufficiently small \(t>0\), a rescaling of the variables \(x\mapsto tx\) allows us to assume, without loss of generality, that
		\begin{equation}
			\label{eq:scaleA}
			L(A)\succeq \frac12 I.
		\end{equation}
		To streamline the exposition, we suppress the corresponding scaling factors in the sequel.
		
		\begin{lemma}[\protect{\cite[Lemma 3.2]{JKM25}}] \label{lem:invertible M}
			The $N(\ad) \times N(\ad)$ scalar matrix $\ME_\ad$ with transpose
			\[
			\ME_{\ad}^\top \ =\  \begin{bmatrix} \langle A^{w} \Omega, e_{v} \rangle\end{bmatrix}_{v,w\in\la x\ra_{\ad}}
			\]
			is invertible.
		\end{lemma}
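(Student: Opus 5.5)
The plan is to show that the matrix $\ME_\ad$ is triangular with nonzero diagonal, after ordering words by the graded lexicographic order. The key object is the vector $A^{w}\Omega$ for $w\in\la x\ra_\ad$, computed by expanding each factor $A_j=\iota^*(C_j+C_j^*)\iota$.

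Write $w=x_{i_1}\cdots x_{i_n}$ with $n\le \ad$. Then
\[
A^w\Omega \;=\; \iota^*(C_{i_1}+C_{i_1}^*)\iota\cdots \iota^*(C_{i_n}+C_{i_n}^*)\iota\,\Omega .
\]
Expanding the product gives a sum over choices, at each position, of either a creation operator or an annihilation operator. Each such term maps $\Omega$ to either $0$ or a single basis vector $e_u$. The compressions $\iota^*\,\cdot\,\iota$ only kill vectors that would leave $\Ftgd$, which can happen only when the length exceeds $\ad$.

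Two facts about this expansion give the structure of $\ME_\ad$.

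\emph{Length bound.} Every term produces a vector of length at most $n$, and the length has the same parity as $n$. A length-$n$ vector arises only when every choice is a creation. Since $n\le\ad$, no intermediate vector has length exceeding $\ad$, so the truncation never interferes with the all-creation term. That term is $C_{i_1}\cdots C_{i_n}\Omega=e_w$.

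\emph{Uniqueness of the top term.} Any term containing at least one annihilation either vanishes or yields some $e_u$ with $|u|<n$. (Annihilating $\Omega$ gives $0$; otherwise each annihilation lowers the length.)

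Hence
\[
A^w\Omega \;=\; e_w+\sum_{|u|<|w|} c_{w,u}\,e_u .
\]
In particular $\langle A^w\Omega,e_v\rangle=\delta_{v,w}$ whenever $|v|\ge|w|$, and the entry can be nonzero only if $v=w$ or $|v|<|w|$.

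Now order $\la x\ra_\ad$ by graded lexicographic order, so that shorter words come first. The matrix $\big[\langle A^{w}\Omega,e_v\rangle\big]_{v,w}$ is then block upper triangular with respect to the grading by length. Its diagonal blocks are identity matrices, because for $|v|=|w|$ the entry is $\delta_{v,w}$. Consequently its determinant is $1$, so $\ME_\ad^\top$, and therefore $\ME_\ad$, is invertible.

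If the scaling in \eqref{eq:scaleA} is incorporated, so that $A$ is replaced by $tA$, each column indexed by $w$ is multiplied by $t^{|w|}\neq0$. The determinant becomes a nonzero power of $t$ and invertibility is preserved.

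The only step needing care is the claim that the truncation $\iota^*\,\cdot\,\iota$ does not destroy the leading term $e_w$. This holds precisely because $|w|\le \ad$, which keeps every intermediate word in the all-creation path within $\Ftgd$. The remainder is bookkeeping.
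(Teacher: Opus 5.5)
Your argument is correct. Every term in the expansion of $A^w\Omega$ other than the all-creation term $e_w$ is either zero or a basis vector $e_u$ with $|u|\le |w|-2$, so the matrix $\big[\langle A^w\Omega,e_v\rangle\big]_{v,w}$ is upper unitriangular in the graded order, and rescaling $A\mapsto tA$ only multiplies the column indexed by $w$ by $t^{|w|}\neq 0$. The paper gives no proof of its own here and only cites Lemma 3.2 of JKM25; your leading-term triangularity argument is the natural self-contained proof of that fact.
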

		
		\subsubsection{Extraction formula for coefficients}\label{ssec:coeff}
		Let $q = \sum Q_{w} w \in \cA_{\ad},$ and let $Q$ be the coefficient row vector of $q.$ For $v\in \la x\ra_{\ad}$ %
		define the linear functional
		\[
		\Omega_{v} :  B(\Ftgd) \to \mathbb{C}; \qquad \Omega_{{v}}(T) \ =\  \langle  T \Omega, e_{v} \rangle. 
		\]
		The operator coefficients \(Q_v\) are obtained from $q(A)$  by solving the linear system
		\begin{align*} 
			Z_{v}(q) \ :=&\ (\mathrm{id}_{ B(\cH)} \otimes \Omega_{v}) q(A) \ =\   \sum\limits_{w} Q_{w} \otimes \Omega_{v} (A^{w}) \\
			\ =&\   \sum\limits_{w} \langle  A^{w} \Omega , e_{v} \rangle \, Q_{w} \ =\   \sum\limits_{w}  [\ME_{\ad}^\top]_{v,w} Q_{w},
		\end{align*}
		where $[\ME_{\ad}^\top]_{v,w}$ is the $(v,w)$ entry of the matrix $\ME_{\ad}^\top.$  In short, \index{$[\ME_d]$}
		\begin{equation}\label{eq:coeff2}
			Z(q) \ =\  Q  \ME_{\ad}, 
		\end{equation}
		where $Z(q)$ and $Q$ are row vectors with $Z_{v}(q)$ and $Q_{v}$  as the $v^{\rm th}$ entry of $Z$ and $Q,$ respectively. 
		Since,  by Lemma~\ref{lem:invertible M}, $\ME_d$ is invertible, 
		\begin{equation} \label{eq:coeff}
			Q \ =\   Z(q) \ME_{\ad}^{-1}.
		\end{equation} 
		{We refer to $\ME$ as the \df{extraction matrix}, and equation~\eqref{eq:coeff} as the \df{extraction formula} for the coefficients of $q.$ Note that 
			the extraction formula depends only upon $q(A);$ that is, the coefficients of $q$ are determined uniquely by $q(A).$}
		
		\smallskip
		
		It follows from equation~\eqref{eq:coeff}  that  there exists a positive constant $\lambda_{\ad}$ (independent of $q$) such that 
		\begin{equation} \label{eq:Coeff bound}
			\|Q_{w}\| \ \leq\  \lambda_{\ad} \, \|q(A)\| \quad \text{ for all $w \in \la x\ra_{\ad}$.}
		\end{equation}
		
		Recall the Veronese column vector $V_\mu$ from the outset of Subsection~\ref{ssec:cones}.  Given a  $p \in \Sigma_{\ad},$ set 
		\begin{equation}
			\label{eq:Gammap}
			\Gamma_p \ =\  \{\, \SG \in  B(\cH)^{N(\ad)\times N(\ad)} \ : \quad \SG \succeq 0, \quad   V_\ad^* \SG V_\ad = p \,\}.
		\end{equation}
		
		\begin{prop}[\protect{\cite[Proposition 3.3]{JKM25}}] \label{prop:bounded}
			For $p \in \Sigma_{\ad}$ the set $\Gamma_p$  is non-empty and 
			norm bounded $($with respect to the operator norm on $ B(\cH^{N(\ad)})$$)$. More precisely, 
			there exists a constant $\mu_{\ad}$ $($depending only on $\ad$ and $\vg$ and not on $p$$)$ such that,
			for all $\SG\in\Gamma_p,$ 
			\[ 
			\|\SG\| \ \leq\  \mu_{\ad} \; \|p(A)\|.
			\]
		\end{prop}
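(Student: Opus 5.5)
Non-emptiness is immediate: since $p\in\Sigma_{\ad}$, write $p=r^*r$ with $r\in\cA_{\ad}$ and coefficient row vector $R$. Then $r=RV_\ad$ and $\SG:=R^*R\succeq 0$ satisfies $V_\ad^*\SG V_\ad=p$, so $\SG\in\Gamma_p$. The substance is the uniform bound, and the plan is to evaluate at the tuple $A$ of \eqref{eq:symcreate} and use that $V_\ad(A)\Omega$ is a family of vectors that is uniformly linearly independent. The invertibility of $\ME_\ad$ in Lemma~\ref{lem:invertible M} supplies this independence.

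First, factor $\SG=K^*K$ with $K=\SG^{1/2}$, a block row indexed by $\la x\ra_{\ad}$ with entries $K_{\cdot,w}$. Set $k=KV_\ad$, so $k^*k=V_\ad^*\SG V_\ad=p$. Evaluating at $A$ (self-adjoint, so $*$ commutes with evaluation) gives $p(A)=k(A)^*k(A)$, and hence $\|k(A)\|^2=\|p(A)\|$. Here $k(A)=\sum_w K_{\cdot,w}\otimes A^w$ maps $\cH\otimes\Ftgd$ into $\cH^{N(\ad)}\otimes \Ftgd$.

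Next, apply coefficient extraction to $k$, which is a column-block version of \eqref{eq:coeff}. Compress $k(A)$ by $I\otimes\Omega$ on the right and by $I\otimes e_v^*$ on the left. This gives $Z_v(k)=\sum_w [\ME_\ad^\top]_{v,w}K_{\cdot,w}$, and $\|Z_v(k)\|\le\|k(A)\|$. Inverting $\ME_\ad$ as in \eqref{eq:coeff} then gives $\|K_{\cdot,w}\|\le\lambda_\ad\|k(A)\|$ for every $w$. The argument leading to \eqref{eq:Coeff bound} carries over verbatim to operators with values in $ B(\cH,\cH^{N(\ad)})$, since it uses only the scalar matrix $\ME_\ad^{-1}$.

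Finally, $\|\SG\|=\|K\|^2$. Bounding the block row norm by the sum of its entries gives $\|K\|\le\sum_w\|K_{\cdot,w}\|\le N(\ad)\lambda_\ad\|k(A)\|$. Hence
\[
\|\SG\|\ \le\ N(\ad)^2\lambda_\ad^2\,\|k(A)\|^2 \ =\ N(\ad)^2\lambda_\ad^2\,\|p(A)\|,
\]
so one can take $\mu_\ad=N(\ad)^2\lambda_\ad^2$, which depends only on $\ad$ and $\vg$.

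The main point needing care is that $k$ has operator coefficients mapping into the larger space $\cH^{N(\ad)}$, so the extraction estimate must be applied in this rectangular setting. This is routine because $\ME_\ad$ is scalar and acts only on the Fock-space tensor factor. One should also check that the rescaling \eqref{eq:scaleA} does not matter here: it changes $A$ and $\lambda_\ad$ only by fixed constants.
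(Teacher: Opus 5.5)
Your argument is correct and is essentially the intended one. You factor a matrix $\SG\in\Gamma_p$ to get a polynomial $k$ with $k^*k=p$, so that $\|k(A)\|^2=\|p(A)\|$, and then bound its coefficients with the extraction estimate \eqref{eq:Coeff bound}, which comes from the invertibility of $\ME_\ad$. The only cosmetic difference is that one can factor $\SG=R^*R$ with $R:\cH^{N(\ad)}\to\cH$, as the paper does in Section~\ref{sec:GNS} using that $\cH$ is infinite-dimensional, so that \eqref{eq:Coeff bound} applies verbatim; your use of $\SG^{1/2}$ instead needs the equally routine rectangular version of the extraction bound.
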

		
		\section{Topologies on \texorpdfstring{$\cA_{\ad}$}{Ad}}\label{sec:top}
		
		In this section we introduce two topologies on $\cA_{\ad}$ used in the sequel and prove that the cone of weighted squares $\wtC_{\vd+1,\vd}$ is closed in the product ultraweak topology,
		defined immediately below.
		
		\smallskip
		
		Identify each polynomial $p=\sum_{w\in \la x\ra_{\ad}} P_w w\in \cA_{\ad}$ with its coefficient tuple $(P_w)_{w\in \la x\ra_{\ad}}\in  B(\cH)^{\la x\ra_{\ad}}$. We equip $\cA_{\ad}$ with the \df{product weak operator topology (WOT)} and the \df{product ultraweak topology} inherited from $ B(\cH)^{\la x\ra_{\ad}}$. Thus a net of polynomials
		\[
		p_\alpha=\sum_{w\in \la x\ra_{\ad}} P_{\alpha,w}w
		\]
		converges to a polynomial $p=\sum_w P_w w$ if and only if $P_{\alpha,w}\to P_w$ for each $w\in \la x\ra_{\ad}$ in the WOT, respectively in the ultraweak topology. In either case, $\cA_{\ad}$ is a locally convex topological vector space.
		
		\subsection{Point-WOT}
		To pass to limits of completely positive maps, we use the following topology. Let $\mathfrak{E}$ be a closed subspace of a $C^*$-algebra and $\cK$ a Hilbert space. A net $(\pi_{\alpha})_{\alpha}$ in $ B(\mathfrak{E},  B(\cK))$ converges to $\pi$ in the \df{point-WOT topology} if $\pi_{\alpha}(a)\to \pi(a)$ in the WOT for all $a\in \mathfrak{E}$. Complete positivity is preserved under point-WOT limits. The closed unit ball of $ B(\mathfrak{E}, B(\cK))$ is compact Hausdorff in the point-WOT; see \cite[Definition 14.7.6]{D25}.
		
		\subsection{Closedness of the cone \texorpdfstring{$\wtC_{\vd+1,\vd}$}{Cd+1,d}}
		
		Recall that $A$ has been scaled so that $L(A)\succeq \frac12$; see \eqref{eq:scaleA}.
		
		\smallskip
		
		Let $\cT(\cH)$ denote the trace-class operators on $\cH$. The space $\cT(\cH)^{\la x\ra_{\ad}}$, equipped with the norm
		\[
		\|(T_w)_w\|_{1} \ =\ \sum_{w\in \la x\ra_{\ad}} \|T_w\|_{1},
		\]
		is a Banach space. Its dual is $ B(\cH)^{\la x\ra_{\ad}}$ with norm
		\[
		\|(B_w)_w\| \ =\ \max_{w\in \la x\ra_{\ad}} \|B_w\|.
		\]
		Under this identification, the weak-$*$ topology induced by $\cT(\cH)^{\la x\ra_{\ad}}$ coincides with the product ultraweak topology. Accordingly, for
		\[
		p=\sum_{w\in \la x\ra_{\ad}} P_w w \in \cA_{\ad},
		\]
		we define
		\[
		\|p\|:=\max_{w\in \la x\ra_{\ad}} \|P_w\|.
		\]
		This norm is relevant only for this subsection. 
		
		\begin{prop} \label{prop:closedness of bounded cone}
			For any $t>0$, the set
			\[
			\wtC_{\vd+1,\vd,t} \ :=\  \{\, p \in \wtC_{\vd+1,\vd} : \|p\|\le t \}
			\]
			is closed in the product ultraweak topology on $\cA_{\ad}$. The same holds with $\wtC_{\vd+1,\vd,t}$ replaced by $\wtC_{\vd,t}$.
		\end{prop}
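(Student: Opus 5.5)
Since $\|\cdot\|$ is the dual norm on $B(\cH)^{\la x\ra_\ad}=\cT(\cH)^{\la x\ra_\ad *}$, the ball $\{\|p\|\le t\}$ is weak-$*$ compact (Banach--Alaoglu) and metrizable, because $\cT(\cH)$ is separable when $\cH$ is separable. It therefore suffices to show sequential closedness. Take $p_n=r_n^*r_n+q_n^*\pi_n(L)q_n\in\wtC_{\vd+1,\vd,t}$ converging ultraweakly (coefficientwise) to $p$. Lower semicontinuity of the dual norm gives $\|p\|\le t$ at once. The work is to show $p\in\wtC_{\vd+1,\vd}$ by extracting convergent subnets of the data $r_n,q_n,\pi_n$.

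\emph{Uniform bounds.} Evaluate at the scaled Fock tuple $A$ of \eqref{eq:symcreate}, for which $L(A)\succeq\frac12 I$ by \eqref{eq:scaleA}. Since each $A^w$ is a fixed finite-dimensional operator, $\|p_n(A)\|\le c_\ad\,\|p_n\|\le c_\ad t$. Moreover $\pi_n(L)(A)=(\pi_n\otimes\mathrm{id})(L(A))\succeq\frac12 I$, because $\pi_n$ is ucp. Hence
\[
0\preceq r_n(A)^*r_n(A)\preceq p_n(A),\qquad \tfrac12\, q_n(A)^*q_n(A)\preceq q_n(A)^*\pi_n(L)(A)q_n(A)\preceq p_n(A).
\]
Here $r_n(A)$ lives in $B(\cH\otimes\Ftgd)$, which is legitimate since $\ad\ge 2\vd+2$. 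It follows that $\|r_n(A)\|^2\le c_\ad t$ and $\|q_n(A)\|^2\le 2c_\ad t$. By the coefficient bound \eqref{eq:Coeff bound}, all coefficients of $r_n$ and $q_n$ are uniformly bounded in norm. The maps $\pi_n$ are ucp, so $\|\pi_n\|_{cb}=1$, and they lie in the point-WOT compact unit ball. The coefficients of $q_n^*\pi_n(L)q_n$ are also bounded, being differences $p_n-r_n^*r_n$.

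\emph{Passing to limits.} This is the main obstacle: products such as $R_{n,u}^*R_{n,v}$ do not converge to the product of the limits under weak convergence of the factors. So I would not pass to the limit in $r_n$ itself, and would instead use Gram representations. Write $r_n^*r_n=V_{\vd+1}^*G_nV_{\vd+1}$ with $G_n=R_n^*R_n\succeq0$, a bounded sequence in $B(\cH^{N(\vd+1)})$. Similarly write
\[
q_n^*\pi_n(L)q_n=\sum_j V_\vd^*H_{n,j}V_\vd\, x_j \quad(\text{with } x_0=1),
\]
where
\[
H_{n,j}=Q_n^*\pi_n(\bA_j)Q_n \quad\text{and}\quad H_{n,0}=Q_n^*Q_n .
\]
The key point is that the map
\[
\Phi_n: a\mapsto Q_n^*\pi_n(a)Q_n,\qquad \oS_L\to B(\cH^{N(\vd)}),
\]
is completely positive with uniformly bounded norm $\|\Phi_n(I)\|=\|Q_n\|^2$. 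By point-WOT compactness, a subnet has $G_n\to G\succeq0$ (WOT) and $\Phi_n\to\Phi$ (point-WOT), with $\Phi$ cp. On bounded sets WOT and ultraweak convergence agree, and coefficient extraction of the polynomial is a finite linear operation on the Gram entries. Hence
\[
p=V_{\vd+1}^*GV_{\vd+1}+\sum_j V_\vd^*\Phi(\bA_j)V_\vd\, x_j .
\]

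\emph{Refactoring.} Factor $G=R^*R$ with $R$ a row of operators into $\cH$. This is possible because $\cH$ is infinite-dimensional, via $G^{1/2}$ and a unitary $\cH^{N}\cong\cH$; it gives $V^*GV=r^*r$ with $r\in\cA_{\vd+1}$. For $\Phi$, apply Theorem~\ref{thm:Arv-Stine} to get $\Phi(a)=T^*\tau(a)T$ on a separable $\cF$. Embed $\cF$ isometrically into $\cH$, again using infinite dimension, possibly after adding a direct summand so that the compression of $\tau$ becomes a ucp map $\pi:\oS_L\to B(\cH)$ with $\Phi(a)=T'^*\pi(a)T'$. Setting $q=T'V_\vd\in\cA_\vd$ gives $q^*\pi(L)q$, so $p\in\wtC_{\vd+1,\vd}$.

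The case of $\wtC_{\vd,t}$ is identical, with $r\in\cA_\vd$. Because the Gram sizes are fixed, the limit has the same degrees.
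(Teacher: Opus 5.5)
Your proof is correct and takes essentially the paper's route, which packages the argument as Lemma \ref{lem:pre-bounded-cone}: uniform bounds from evaluating at the Fock tuple $A$ with $L(A)\succeq\frac12$, compactness limits of the psd Gram matrices and of the cp maps $Q_n^*\pi_n(\cdot)Q_n$, and refactoring through Stinespring as in Lemma \ref{l:cp-to-ucp}. The differences are cosmetic: your reduction to sequences via metrizability is harmless but unnecessary, you bound $R_n^*R_n$ directly via \eqref{eq:Coeff bound} instead of through Proposition \ref{prop:bounded}, and you treat $\wtC_{\vd,t}$ directly rather than as $\wtC_{\vd+1,\vd,t}\cap\cA_{2\vd+1}$.
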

		
		The proof uses the following two lemmas.
		
		\begin{lemma}
			\label{l:cp-to-ucp}
			Let $\cH$ be a fixed separable infinite-dimensional Hilbert space. If $\oS$ is a finite-dimensional unital  operator system, $\cE $ is a separable (infinite-dimensional) Hilbert space and $\psi:\oS\to  B(\cE )$ is completely positive (cp), then there is a ucp map $\pi: \oS \to  B(\cH)$ and a bounded operator $V :\cE\to\cH $ such that
			\[
			\psi(X) \ =\  V^* \pi(X) V
			\]
			for all $X\in\oS.$
		\end{lemma}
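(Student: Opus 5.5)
The plan is to dilate $\psi$ with Theorem~\ref{thm:Arv-Stine} and then move the dilation into $\cH$ via a unitary; the constants are absorbed into $V$. Since $\psi$ is cp but not necessarily unital, $\psi(I)$ is a positive operator, and Stinespring--Arveson applies to cp maps. So I would first apply Theorem~\ref{thm:Arv-Stine} to $\psi:\oS\to B(\cE)$, with $\oS\subset\oC$ for some $C^*$-algebra $\oC$ (e.g. $B(\tcH)$ or $C^*(\oS)$). This gives a Hilbert space $\cF$, a unital $*$-representation $\tau:C^*(\oS)\to B(\cF)$, and a bounded $T:\cE\to\cF$ with $\psi(X)=T^*\tau(X)T$. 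Since $\oS$ is finite dimensional and $\cE$ is separable, $\cF$ may be taken separable.

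Next I would arrange that $\cF$ is infinite dimensional, so that it is unitarily equivalent to $\cH$. If $\cF$ is finite dimensional, I would replace $\tau$ by $\tau\oplus\sigma$ on $\cF\oplus\cF'$. Here $\sigma$ is any unital representation of $C^*(\oS)$ on a separable infinite-dimensional space $\cF'$; for instance take $\tau^{(\infty)}$, the countable direct sum of copies of $\tau$, and take $\sigma$ nonzero (possible since $\oS$ is unital, so $C^*(\oS)\neq 0$). I would replace $T$ by $T\oplus 0$, i.e. $\xi\mapsto (T\xi,0)$. The identity $\psi(X)=T^*\tau(X)T$ survives the enlargement. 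More simply, I would always replace $\tau$ by $\tau^{(\infty)}=\tau\otimes I_{\ell^2}$ on $\cF\otimes\ell^2$ and $T$ by $T\otimes e_1$. The new space is then separable and infinite dimensional, provided $\cF\neq 0$; if $\cF=0$, then $\psi=0$ and we first add any unital representation.

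Then I would fix a unitary $U:\cF\to\cH$, which exists because both spaces are separable and infinite dimensional. Define $\pi(X)=U\tau(X)U^*$ for $X\in\oS$ and $V=UT:\cE\to\cH$. Then $\pi$ is the restriction to $\oS$ of a unital $*$-homomorphism conjugated by a unitary. Hence $\pi$ is ucp, since $\pi(I)=U\tau(I)U^*=UU^*=I_\cH$. Finally,
\[
V^*\pi(X)V \ =\ T^*U^*U\tau(X)U^*UT \ =\ T^*\tau(X)T \ =\ \psi(X).
\]

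There is no real obstacle. The only points needing care are ensuring that the Stinespring space is separable and infinite dimensional, handled by the amplification trick, and ensuring that $\tau$ is unital so that $\pi$ is unital. For the latter, note that $C^*(\oS)$ contains the unit of $\oS$, and if needed one can cut $\tau$ down to its essential subspace $\tau(I)\cF$, which contains $\range T$, since $T^*\tau(I)T=\psi(I)$ and the Stinespring construction is taken minimal.
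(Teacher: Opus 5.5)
Your proof is correct and follows the same route as the paper: dilate $\psi$ via Theorem~\ref{thm:Arv-Stine}, transport the representation to $\cH$ by a unitary $U:\cF\to\cH$, and set $\pi=U\tau(\cdot)U^*$ and $V=UT$. Your amplification to $\tau\otimes I_{\ell^2}$ and $T\otimes e_1$, and your check that $\tau$ is unital, fill in two points the paper takes for granted: that $\cF$ can be taken infinite dimensional, and that $\pi$ is unital and not merely cp. For unitality, you can simply replace $T$ by $\tau(I)T$ and restrict $\tau$ to $\tau(I)\cF$, which avoids any appeal to minimality.
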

		
		\begin{proof}
			By Theorem \ref{thm:Arv-Stine}, there exists a separable (infinite-dimensional) Hilbert space $\cF,$ a $*$-representation $\tau: C^*(\oS) \to  B(\cF),$ and a bounded operator $\VT: \cE \to \cF$ such that $\psi(X) = \VT^{*} \tau(X) \VT.$ Since $\cH$ and $\cF$ are both separable and infinite  dimensional, 
			there is a unitary operator $U: \cF \to \cH.$ The map $\pi: \oS \to  B(\cH)$ defined by
			\[
			\pi(X) \ = \ U \tau(X) U^{*}
			\]
			is cp. Setting $V = U \VT $ gives, 
			\[
			\psi(X) \ =\ \VT^{*} \tau(X) \VT \ = \ \VT^{*} U^{*} \pi(X) U \VT \ =\  V^*  \pi(X) V.  \qedhere
			\]
		\end{proof}

		\begin{lemma}\label{lem:pre-bounded-cone}
			Let $\oS=\spann\{I,B_1,\dots,B_\vh\}$ be a finite-dimensional operator system with $B_j$ self-adjoint and $\vh\ge \vg$. Let $(\penM_\alpha)_\alpha$ be a net of monic linear pencils
			\[
			\penM_\alpha(x) \ =\ I+\sum_{j=1}^{\vg} D_{\alpha,j} x_j,
			\]
			such that $\penM_\alpha(A)\succeq \frac12$ and $D_{\alpha,j}\in \spann\{B_1,\dots,B_\vh\}$. Suppose $D_{\alpha,j}\to D_j$ in norm for each $j$, and set
			\[
			\penM(x) \ =\ I+\sum_{j=1}^{\vg} D_j x_j.
			\]
			
			Let $r_\alpha\in \Sigma_{\vd+1}$, $q_\alpha\in \cA_\vd$, and $\pi_\alpha\in \UCP(\oS, B(\cH))$, and define
			\[
			p_\alpha \ =\  r_\alpha + q_\alpha^* \pi_\alpha(\penM_\alpha) q_\alpha.
			\]
			
			If there exists $\kappa>0$ such that $\|p_\alpha\|\le \kappa$ for all $\alpha$ and $(p_\alpha)_\alpha$ converges to $p$ in the product ultraweak topology, then there exist $r\in \Sigma_{\vd+1}$, $q\in \cA_\vd$, and $\pi\in \UCP(\oS, B(\cH))$ such that
			\[
			p \ =\  r + q^* \pi(\penM) q.
			\]
			
			The same conclusion holds with $\Sigma_{\vd+1}$ replaced by $\Sigma_\vd$.
		\end{lemma}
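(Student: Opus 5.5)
The plan is to evaluate everything at the canonical tuple $A$ from \eqref{eq:symcreate}, which gives uniform norm bounds. Then pass to a convergent subnet by compactness. The key device is to package the product $q_\alpha^*\pi_\alpha(\cdot)q_\alpha$ into a single cp map, so that no product of three weakly convergent nets has to be taken.

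\emph{Step 1 (uniform bounds).} Since $p_\alpha(A)=\sum_w P_{\alpha,w}\otimes A^w$ and $\|P_{\alpha,w}\|\le\kappa$, we get $\|p_\alpha(A)\|\le c\kappa$ for a constant $c$ depending only on $\ad,\vg$. Next,
\[
p_\alpha(A)\ =\ r_\alpha(A)+q_\alpha(A)^*\,(\pi_\alpha\otimes{\rm id})(\penM_\alpha(A))\,q_\alpha(A).
\]
Here $r_\alpha(A)\succeq0$, and $(\pi_\alpha\otimes{\rm id})(\penM_\alpha(A))\succeq\frac12$ because $\pi_\alpha$ is ucp and $\penM_\alpha(A)\succeq\frac12$. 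Hence $0\preceq r_\alpha(A)\preceq c\kappa$ and $q_\alpha(A)^*q_\alpha(A)\preceq 2c\kappa$. By \eqref{eq:Coeff bound}, the coefficients $Q_{\alpha,w}$ of $q_\alpha$ are uniformly bounded. By Proposition~\ref{prop:bounded}, every Gram matrix $\SG_\alpha\in\Gamma_{r_\alpha}$ satisfies $\|\SG_\alpha\|\le\mu_\ad c\kappa$.

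\emph{Step 2 (packaging).} Let $Q_\alpha=(Q_{\alpha,v})_{v\in\la x\ra_\vd}$ be the coefficient row, viewed as an operator $\cH^{N(\vd)}\to\cH$. Define the cp map
\[
\psi_\alpha:\oS\to B(\cH^{N(\vd)}),\qquad \psi_\alpha(X)=Q_\alpha^*\pi_\alpha(X)Q_\alpha .
\]
Then $\|\psi_\alpha\|\le\|Q_\alpha\|^2$ is uniformly bounded. Expanding,
\[
q_\alpha^*\pi_\alpha(\penM_\alpha)q_\alpha=\sum_{u,v}u^*\,[\psi_\alpha(I)]_{u,v}\,v+\sum_{j}\sum_{u,v}u^*\,[\psi_\alpha(D_{\alpha,j})]_{u,v}\,x_jv .
\]
So each coefficient of $p_\alpha$ is a finite sum of entries of $\SG_\alpha$, $\psi_\alpha(I)$, and $\psi_\alpha(D_{\alpha,j})$.

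\emph{Step 3 (limits).} By weak-$*$ compactness of bounded sets in $B(\cH^{N})$ and point-WOT compactness of bounded sets of maps on the finite-dimensional $\oS$, pass to a subnet with $\SG_\alpha\to\SG\succeq0$ ultraweakly and $\psi_\alpha\to\psi$ point-WOT, with $\psi$ cp. On bounded sets WOT and ultraweak convergence agree, so these limits are ultraweak. Since $D_{\alpha,j}\to D_j$ in norm and $\sup_\alpha\|\psi_\alpha\|<\infty$, we also get $\psi_\alpha(D_{\alpha,j})-\psi_\alpha(D_j)\to0$ in norm, hence $\psi_\alpha(D_{\alpha,j})\to\psi(D_j)$. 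Uniqueness of product ultraweak limits then gives
\[
p=V_{\vd+1}^*\SG V_{\vd+1}+\sum_{u,v}u^*[\psi(I)]_{u,v}v+\sum_{j,u,v}u^*[\psi(D_j)]_{u,v}x_jv .
\]

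\emph{Step 4 (refactoring).} First, use $\dim\cH=\infty$ to write $\SG=R^*R$ with $R:\cH^{N(\vd+1)}\to\cH$; for instance, take $\SG^{1/2}$ composed with a unitary $\cH^{N(\vd+1)}\to\cH$. Setting $r=(RV_{\vd+1})^*(RV_{\vd+1})$ gives $r\in\Sigma_{\vd+1}$. Second, apply Lemma~\ref{l:cp-to-ucp} to $\psi$ to get a ucp $\pi:\oS\to B(\cH)$ and $W:\cH^{N(\vd)}\to\cH$ with $\psi=W^*\pi(\cdot)W$. Let $q=WV_\vd\in\cA_\vd$, so its coefficients are the components $Q_v$ of $W$. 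The displayed formula then becomes $p=r+q^*\pi(\penM)q$. The $\Sigma_\vd$ case is identical.

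The main obstacle is Step 3: the naive limits of $Q_\alpha$ and $\pi_\alpha$ separately do not multiply correctly under weak convergence. Passing to the single cp map $\psi_\alpha$ and refactoring only at the end via Lemma~\ref{l:cp-to-ucp} is what makes the argument work. The only other care needed is the norm-to-ultraweak transfer for $D_{\alpha,j}$.
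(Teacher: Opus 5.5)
Your proposal is correct and follows essentially the same route as the paper's proof: uniform bounds by evaluating at $A$ (via Proposition~\ref{prop:bounded} and \eqref{eq:Coeff bound}), packaging $Q_\alpha^*\pi_\alpha(\cdot)Q_\alpha$ into a single bounded cp map $\psi_\alpha$, passing to a subnet by Banach--Alaoglu and point-WOT compactness, and refactoring the limit $\psi$ with Lemma~\ref{l:cp-to-ucp}. The differences are only in small details that the paper leaves implicit: you factor $\SG=R^*R$ explicitly to certify $r\in\Sigma_{\vd+1}$, and you read the coefficients of $r_\alpha$ directly from Gram-matrix entries instead of going through the extraction formula.
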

		
		\begin{proof}
			Recall the tuple $A$ from equation~\eqref{eq:symcreate} 
			derived from the creation operators and scaled so that 
			$L(A)\succeq \frac{1}{2}.$
			Choose $K\ge \kappa \sum_{w\in \la x\ra_{2\vd+2}} \|A^w\|$ and note,
			\[
			\|p_\alpha(A)\|\ \le\  \|p_\alpha\|\sum_{w\in \la x\ra_{2\vd+2}} \|A^w\| \ \le\  K.
			\]
			Since $r_\alpha\in \Sigma_{\vd+1}$ and $\pi_\alpha(\penM_\alpha)(A)\succeq \frac12$,
			\[
			0 \ \preceq\  r_\alpha(A), \qquad \frac12 q_\alpha^*q_\alpha(A) \ \preceq\  p_\alpha(A),
			\]
			and hence $\|r_\alpha(A)\|,\|q_\alpha(A)\|\le K$.
			
			\smallskip
			
			Let $G_\alpha\in \Gamma_{r_\alpha}$. By Proposition~\ref{prop:bounded}, the family $(\SG_\alpha)_\alpha$ is uniformly bounded. By Banach--Alaoglu, there exists a subnet $(\SG_\beta)_\beta$ converging ultraweakly to some $\SG\succeq 0$. Set $r=V_{\vd+1}^*\SG V_{\vd+1},$ where $V_{\vd+1}$ is the Veronese column vector from Subsection~\ref{ssec:cones}.  Using \eqref{eq:coeff}, evaluation at $A$ determines coefficients, hence $r_\beta\to r$ in the product ultraweak topology.
			
			\smallskip
			
			Let $Q_\beta$ denote the coefficient row vector of $q_\beta$ so that $q_{\beta} = Q_{\beta} V_{\vd}$. The maps 
			\[
			\psi_{\beta} : \oS \to  B(\cH^{N(\vd)}); \qquad X \ \mapsto \ Q_\beta^*\pi_\beta(X)Q_\beta
			\]
			are cp. By equations~\eqref{eq:Coeff bound} and the uniform bound on $\|q_\alpha(A)\|,$ 
			\[
			\sup\limits_{\beta} \| Q_{\beta} \| \ <\  \infty.
			\]
			Thus the net of cp maps $(\psi_{\beta})_{\beta}$ is uniformly bounded in the operator 
			norm. Therefore, there exists a subnet $(\psi_{\gamma})_{\gamma} $ of $(\psi_{\beta})_{\beta} $
			that converges to some cp map say $\psi: \oS \to  B(\cH^{N(\vd)})$ in the point-WOT. By Lemma \ref{l:cp-to-ucp}, there exists a ucp map $\pi : \oS \to  B(\cH)$ and a bounded operator $Q: \cH^{N(\vd)} \to \cH$ such that 
			\[
			\psi(X) \ = \ Q^{*} \pi(X) Q.
			\]
			
			\smallskip
			
			Since $(\psi_{\gamma})_{\gamma}$ converges to $\psi$ in the point-WOT,
			the net  
			$\psi_{\gamma}(I_{\tcH})=Q^*_\gamma Q_\gamma$  converges to $\psi(I_{\tcH})=Q^*Q$ in the WOT.
			Likewise,   {since also  the nets $(D_{\gamma,j})_\gamma$ norm converge to $D_j$ and the maps
				$\psi_\gamma$ are uniformly norm bounded}, 
			the net  $\psi_{\gamma}(D_{\gamma,j})$ converges to $\psi(D_j)$ in the WOT for each $j.$
			Hence, from the identity, 
			\[
			\begin{split}
				q_{\gamma}^{*} \pi_{\gamma}(\penM_\gamma) q_{\gamma} 
				\ =&\ V_{\vd}^{*} Q_{\gamma}^*  \Big( I_{\cH} + \sum\limits_{j=1}^{\vg} \pi_{\gamma} (D_{\gamma,j}) x_{j}\Big) Q_{\gamma} V_{\vd} \\
				=&\ V_{\vd}^{*} Q_{\gamma}^{*}Q_{\gamma} V_{\vd} + V_{\vd}^{*} \Big(\sum\limits_{j=1}^{\vg} \psi_\gamma (D_{\gamma,j}) x_{j} \Big) V_{\vd}, \\ 
			\end{split}
			\]
			it follows that $q_\gamma^* \pi_\gamma(\penM_\gamma) q_\gamma$ converges in the product WOT on $\cA_{\ad}$ to 
			\[
			\begin{split}
				\ \  V_{\vd}^* Q^*Q V_{\vd} + V_{\vd}^* \Big(\sum\limits_{j=1}^{\vg} \psi (D_{j}) x_j 
				\Big ) V_{\vd} 
				\ \ & =  \  V_{\vd}^{*} Q^* Q V_{\vd} + V_{\vd}^{*} Q^* \Big(\sum\limits_{j=1}^{\vg} \pi (D_{j})  x_{j} \Big) Q V_{\vd} 
				\\ & = \ V_{\vd}^{*} Q^* \pi(\penM) Q  V_{\vd} 
				\ =\ q^{*} \pi(\penM) q,
			\end{split}
			\]
			where $q = Q V_{\vd}\in \cA_{\vd}.$  
			Hence  $p_\gamma = r_{\gamma} + q_{\gamma}^{*} \pi(\penM_\gamma) q_{\gamma}$ 
			converges to $r + q^{*} \pi(\penM) q \in \wtC_{\vd+1,\vd}$ in the product WOT on $\cA_{\ad}.$ Since, by assumption, 
			$(p_\alpha)$ also converges in the product ultraweak topology to $p,$ it follows that
			$p=r+q^* \pi(\penM)q$ as desired. 
		\end{proof}
		
		\begin{proof}[Proof of Proposition~\ref{prop:closedness of bounded cone}]
			Let $p_\alpha\in \wtC_{\vd+1,\vd,t}$ converge ultraweakly to $p$. Since $\cA_{\ad}$ is a dual space, the norm is weak-$*$ lower semicontinuous, hence $\|p\|\le t$. It remains to show $p\in \wtC_{\vd+1,\vd}$.
			
			\smallskip
			
			Write $p_{\alpha} = r_{\alpha} + q_{\alpha}^{*} \pi_{\alpha}(L) q_{\alpha}$ for some $r_{\alpha} \in \Sigma_{\vd+1},$ $q_{\alpha} \in \cA_{\vd},$ and $\pi_{\alpha} \in \UCP (\oS_{L},  B(\cH)).$ 
			Choosing, in Lemma~\ref{lem:pre-bounded-cone},  $\oS=\oS_L,$ each $\penM_\alpha =L$ and each $D_{\alpha,j} =\bA_j,$  the net
			$(p_\alpha)$ satisfies the hypotheses of that lemma with $\penM=L.$ Hence, there exists $r\in \Sigma_{\vd+1},$ $q\in \cA_\vd$ and  $\pi\in\UCP(\oS_L, B(\cH))$ such that 
			\[
			p = r + q^* \pi(L)q \ \in\  \wtC_{\vd+1,\vd}. \qedhere
			\]
			
			{Since $\cA_{2\vd+1}$ is closed 
				in $\cA_{\ad}$
				in the product ultraweak topology, the result for
				$\wtC_{\vd,t}$ follows by noting that 
				\[
				\wtC_{\vd,t} \ =\ \wtC_{\vd+1,\vd,t}\cap\cA_{2\vd+1},
				\]
				see Lemma \ref{lem:reduction}.} 
		\end{proof}
		
		\begin{prop}\label{prop:closedcone}
			The cone $\wtC_{\vd+1,\vd}$ is closed in the product ultraweak topology on $\cA_{\ad}$. The same holds for $\wtC_\vd$.
		\end{prop}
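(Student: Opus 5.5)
The natural route is the Krein--Smulian theorem. By the identification in Section~\ref{sec:top}, $\cA_{\ad}\cong B(\cH)^{\la x\ra_{\ad}}$ is the Banach dual of the Banach space $\cT(\cH)^{\la x\ra_{\ad}}$. Moreover, its weak-$*$ topology is exactly the product ultraweak topology. The Krein--Smulian theorem states that a convex subset of the dual of a Banach space is weak-$*$ closed if and only if its intersection with every closed ball $\{p:\|p\|\le t\}$ is weak-$*$ closed. So the plan is to verify the hypotheses of that theorem for $\wtC_{\vd+1,\vd}$ and for $\wtC_\vd$.

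\textbf{Convexity.} By the preceding proposition, both cones are closed under addition. They are also closed under multiplication by scalars $c\ge 0$: replace $r,q$ by $\sqrt{c}\,r,\sqrt{c}\,q$. Hence both sets are convex cones.

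\textbf{Bounded pieces are closed.} Proposition~\ref{prop:closedness of bounded cone} says precisely that $\wtC_{\vd+1,\vd}\cap\{\|p\|\le t\}=\wtC_{\vd+1,\vd,t}$ is closed in the product ultraweak topology for every $t>0$. It gives the same for $\wtC_{\vd,t}$. For $t=0$ the intersection is $\{0\}$, which is trivially closed.

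\textbf{Conclusion.} Krein--Smulian now yields that $\wtC_{\vd+1,\vd}$ is weak-$*$ closed, that is, closed in the product ultraweak topology. The same argument applies verbatim to $\wtC_\vd$. Alternatively, one can note that $\wtC_\vd=\wtC_{\vd+1,\vd}\cap\cA_{2\vd+1}$ (Lemma~\ref{lem:reduction}) and that $\cA_{2\vd+1}$ is closed, being the common zero set of the weak-$*$ continuous coordinate maps $p\mapsto P_w$ for $|w|>2\vd+1$.

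\textbf{Main point to check.} The only genuine issue is that the identification of the product ultraweak topology with a weak-$*$ topology on a Banach dual is set up correctly, so that Krein--Smulian applies. The norm $\max_w\|P_w\|$ is exactly the dual norm of $\sum_w\|T_w\|_1$, and the duality pairing is $\sum_w\trace(P_wT_w)$, so this is already in place. The substantive analytic work is in Proposition~\ref{prop:closedness of bounded cone}, which is assumed here.
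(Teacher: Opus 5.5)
Your proposal is correct and is the paper's own proof: the paper simply combines Proposition~\ref{prop:closedness of bounded cone} with the Krein--Smulian theorem. Your explicit checks just spell out what the paper leaves implicit, namely convexity of the cones and the identification of the product ultraweak topology with the weak-$*$ topology coming from $\cT(\cH)^{\la x\ra_{\ad}}$.
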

		
		\begin{proof}
			Combine  Proposition~\ref{prop:closedness of bounded cone} and the Krein--Smulian theorem \cite[Theorem 3.6.2]{D25}.
		\end{proof}

		\section{GNS construction}\label{sec:GNS}
		This section is devoted to the proof of the GNS-inspired result Theorem \ref{thm:GNS} below.

		\begin{thm} \label{thm:GNS}
			Suppose $\cD_{L}$ is bounded. If $\varphi:\cA_{2\vd+2}\to\mathbb C$ is a continuous (in the product ultraweak topology) linear functional such that, 
			for all $q\in \wtC_{\vd+1,\vd},$
			\[
			\varphi(q)\ \geq\  0, %
			\] 
			then there exist a separable Hilbert space $\cE$, a  $\vg$-tuple $Y=(Y_1,\dots,Y_\vg)$ of
			bounded self-adjoint operators on $\cE$, and a vector $\gamma\in \cH\otimes \cE$ such that $L(Y) \succeq 0,$ and 
			\begin{equation} \label{eq:representation formula}
				\varphi(q^ *r)\ =\ \big\langle r(Y)\gamma,\ q(Y)\gamma\big\rangle_{\cH\otimes \cE} \qquad \text{ for all $r \in \cA_{\vd+1}$ and $q\in\cA_{\vd}.$} 
			\end{equation}
			Therefore, for all $p\in\cA_{2\vd+1}$,
			\[
			\varphi(p)\ =\ \langle p(Y)\gamma,\ \gamma\rangle. 
			\]
		\end{thm}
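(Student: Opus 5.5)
We follow the usual GNS construction for the sesquilinear form coming from $\varphi$. The difference is that the "scalars" are now vectors of $\cH$ and the coefficients are operators, so $\varphi$ must first be turned into a form on $\cH$-valued polynomial vectors. Because $\varphi$ is ultraweakly continuous on each coefficient, there are trace-class operators $T_w$ with
\[
\varphi\Big(\sum_w P_w w\Big)\ =\ \sum_w \trace(P_w T_w),
\]
for $|w|\le 2\vd+2$. Let $\cK_0$ be the algebraic tensor product $\cH\otimes\C\la x\ra_{\vd+1}$, that is, finite sums $\sum_u h_u\otimes u$. The plan is to define a form $\la\cdot,\cdot\ra_\varphi$ on $\cK_0$ so that the polynomial $r=\sum R_u u$ corresponds to "$r$ applied to $\gamma$". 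A natural route is to diagonalize the positive part. The form $(q,r)\mapsto\varphi(q^*r)$ on $\cA_{\vd+1}$ is positive semidefinite, since $r^*r\in\Sigma_{\vd+1}\subset\wtC_{\vd+1,\vd}$. Using $\cA_{\vd+1}\cong B(\cH)\otimes\C\la x\ra_{\vd+1}$ and trace-class duality, this yields a psd operator matrix $\Phi=[\Phi_{u,v}]$ acting on $\cH\otimes\cH'$ for an auxiliary separable space $\cH'$ (from the polar/trace-class decomposition of the $T_w$), with
\[
\varphi(q^*r)\ =\ \la \Phi\, \widehat r,\widehat q\ra
\]
for suitable vectors $\widehat r$ built from coefficients and a fixed Hilbert–Schmidt factor. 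Quotienting by the null space and completing gives a separable Hilbert space $\cE_0$. Inside it sits the subspace $\cE_1$, the closure of the image of $\cA_\vd$. The vector $\gamma$ is the class of $I\cdot\varnothing$, suitably identified inside $\cH\otimes\cE$.

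Next we define $Y_j$ on $\cE_1$ by compressing left multiplication: $Y_j[q]:=P_{\cE_1}[x_jq]$ for $q\in\cA_\vd$. The value $[x_jq]$ lies in $\cE_0$ because $\deg x_jq\le\vd+1$. Symmetry of $Y_j$ is immediate from $x_j^*=x_j$ and the identity $\varphi((x_jq)^*r)=\varphi(q^*x_jr)$, which holds for $q,r\in\cA_\vd$. The \emph{main obstacle} is boundedness: in the scalar/matrix case this is automatic because $\cE_1$ is finite-dimensional, but here it is not. To obtain it we use that $\cD_L$ is bounded, so there is $N$ with $N^2-x_j^2\succeq0$ on $\cD_L$. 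By the finite-dimensional/matrix convex Positivstellensatz (e.g.\ \cite{HKM12,Zal17} applied with scalar coefficients, or a direct argument using that bounded $\cD_L$ means $N\pm x_j$ lies in the cone generated by $L$), $N\pm x_j= \sum c_k^* c_k + \sum b_k^*Lb_k$ with scalar $b_k$ constant. Multiplying by $q^*(\cdot)q$ and sending scalars into $\pi$ via a ucp map then gives
\[
N\varphi(q^*q)\pm\varphi(q^*x_jq)\ \ge\ 0
\]
for every $q\in\cA_\vd$, because $q^*(N\pm x_j)q\in\wtC_{\vd+1,\vd}$; the weights $q^*b_k^*\pi(L)b_kq$ are handled by the same direct-sum/unitary trick as in the closedness-under-addition proposition. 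This yields $|\la Y_j\xi,\xi\ra|\le N\|\xi\|^2$ on a dense set, so each $Y_j$ extends to a bounded self-adjoint operator. The point of the weighted cone $\wtC_{\vd+1,\vd}$ is precisely that $q^*x_jq$ has degree up to $2\vd+1$ and is still controlled.

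It remains to prove $L(Y)\succeq0$ and \eqref{eq:representation formula}. For the latter, an induction on word length shows $w(Y)\gamma=[w]$ for $|w|\le\vd$; the final letter may leave $\cE_1$, but only one further step $r=x_jq$ with $|q|=\vd$ is needed. Then $\la[x_jq],[q']\ra=\la Y_j[q],[q']\ra$ for $q'\in\cA_\vd$, because the projection is harmless when pairing against $\cE_1$, and this gives the formula with $\cE=\cE_1$. For $L(Y)\succeq0$ it suffices to test on vectors $\sum_k e_k\otimes[q_k]$ in $\tcH\otimes\cE_1$ that are dense. Pairing gives
\[
\sum_{k,l}\la e_k,\bA_j e_l\ra\,\varphi(q_l^*x_jq_k)+\cdots .
\]
This can be written as $\varphi(\tilde q^*\pi(L)\tilde q)$, where $\pi$ is the vector-state-type ucp map $X\mapsto (\la Xe_l,e_k\ra)_{k,l}$ amplified to $B(\cH)$ through the identification $\cH\cong\cH\otimes\C^n$. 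That quantity is nonnegative by hypothesis. Finally, separability of $\cE$ follows because it is spanned by classes of $\cH\otimes\la x\ra_{\vd+1}$ and $\cH$ is separable, and the last assertion $\varphi(p)=\la p(Y)\gamma,\gamma\ra$ follows by writing $p\in\cA_{2\vd+1}$ as a sum of terms $q^*r$.
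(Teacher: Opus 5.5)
\textbf{Gap: the tensor structure $\cH\otimes\cE$ is asserted, not constructed.} The step you dispose of with ``suitably identified'' is the genuinely operator-valued part of the theorem, and as written it is not correct. You announce a form on $\cH\otimes\C\la x\ra_{\vd+1}$, which is exactly the paper's space $\notV=\bigoplus_w\cH$. But the construction you actually carry out is the GNS space of the form $(q,r)\mapsto\varphi(q^*r)$ on $\cA_{\vd+1}$. There the vectors are classes $[r]$ of $B(\cH)$-valued polynomials, $\gamma=[I\varnothing]\in\cE_1$, and the coefficients act by left multiplication $\sigma(a)[r]=[ar]$. What your computation proves is $\varphi(q^*r)=\la\sum_w\sigma(R_w)Y^w\gamma,\ \sum_v\sigma(Q_v)Y^v\gamma\ra_{\cE_1}$.

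The theorem instead asks for $\gamma\in\cH\otimes\cE$ and $r(Y)=\sum_w R_w\otimes Y^w$, with $Y$ acting on $\cE$ alone. Setting $\cE=\cE_1$, as you do at the end, does not give this. Your $\gamma$ is not a vector of $\cH\otimes\cE_1$, and obvious candidates fail: for example $e\otimes[I\varnothing]$ only reproduces $\sum_{v,w}\la Q_v^*R_we,e\ra\,\varphi(v^*w)$. What is needed is an identification $\cE_1\cong\cH\otimes\cE$ with $\sigma(a)=a\otimes I_\cE$ and $Y_j=I_\cH\otimes Y_j'$.

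This is exactly where ultraweak continuity of $\varphi$ must enter. For $\varphi(p)=\omega(P_\varnothing)$ with $\omega$ a singular state on $B(\cH)$, $\varphi$ is nonnegative on $\wtC_{\vd+1,\vd}$ and every other step of your argument goes through. Yet no representation of the required form exists, because $\omega(R)=\varphi(R\varnothing)$ would have to equal the normal functional $\la(R\otimes I)\gamma,\gamma\ra$. Your proposal uses continuity only to write $\varphi$ through trace-class operators. The remarks about an auxiliary $\cH'$ and a ``fixed Hilbert--Schmidt factor'' do not amount to a construction. Your separability argument has the same defect: $\cE_1$ is spanned by classes of $B(\cH)$-polynomials, not of elements of $\cH\otimes\C\la x\ra_{\vd+1}$, and $B(\cH)$ is not separable.

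\textbf{How to close it.} One option is to follow the paper:
\begin{enumerate}
\item[(a)] Encode $\varphi$ by the positive trace-class block matrix $S=[S_{w^*v}]_{v,w}$ and build the Hilbert space from $\bigoplus_w\cH$. It is then spanned by $\cH$-vectors and hence separable.
\item[(b)] The coordinate maps $\Phi(w)$ are Hilbert--Schmidt, since $\Phi(w)^*\Phi(w)=S_{w^*w}$ is trace class.
\item[(c)] Set $\gamma=\mathrm{vec}(P_\cE\rho\,\Phi(\varnothing))$. Then $(I\otimes Y^w)\gamma=\mathrm{vec}(P_\cE\rho\,\Phi(w))$, and \eqref{eq:representation formula} follows from the vectorization identity.
\end{enumerate}

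The other option is to keep your algebra-level GNS and prove the decomposition:
\begin{enumerate}
\item[(a)] $\sigma$ is bounded, because $r^*(\|a\|^2-a^*a)r\in\Sigma_{\vd+1}$.
\item[(b)] $\sigma$ is unital and normal, because $a\mapsto\varphi(q^*ar)$ is ultraweakly continuous. So on the reducing subspace $\cE_1$ it is a multiple of the identity representation, $\cE_1\cong\cH\otimes\cE$.
\item[(c)] $Y_j\sigma(a)=\sigma(a)Y_j$, since $x_ja=ax_j$ and $P_{\cE_1}$ commutes with $\sigma$. Hence $Y_j=I\otimes Y_j'$, your identity becomes exactly \eqref{eq:representation formula} for $Y'$, and $L(Y)\succeq0$ transfers to $L(Y')\succeq0$.
\end{enumerate}
Concretely, $[r]\mapsto RS^{1/2}$, with $R$ the coefficient row, is an isometry into $\mathrm{HS}(\cH^{N(\vd+1)},\cH)\cong\cH\otimes\overline{\cH^{N(\vd+1)}}$ carrying $\sigma(a)$ to $a\otimes I$; this also gives separability.

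With this in place the rest of your argument is sound. Your boundedness step is in fact simpler than the paper's. Every vector of your dense subspace is already $[q]$ with $q\in\cA_\vd$, so $\la Y_j[q],[q]\ra=\varphi(q^*x_jq)$ directly, with no need for the auxiliary polynomial with rank-one coefficients.
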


		\begin{remark}\rm 
			While the construction of the auxiliary Hilbert spaces and coordinate maps in Subsections \ref{ssec:4.1} and \ref{ssec:4.2} proceeds without topological assumptions on $\cD _L$, the boundedness of $\cD _L$ is strictly necessary to ensure the boundedness of the left-multiplication operators $Y_j$ defined in Subsection \ref{ssec:E}. More precisely, this assumption is used in the Subsubsection \ref{Ybdd}.
		\end{remark}

		We begin with a brief outline of the argument. We first encode the linear functional $\varphi$ by an 
		$N(\vd+1)\times N(\vd+1)$ positive trace-class block matrix
		$ S=[S_{u,v}]_{u,v\in \la x\ra_{\vd+1}},$ which 
		induces a positive sesquilinear form on the vector space 
		$ \notV=\bigoplus_{w\in \la x\ra_{\vd+1}} \cH. $
		Passing to the quotient by the null space and completing, we obtain an auxiliary Hilbert space
		$\cM.$
		
		\smallskip
		
		For each word $w\in \la x\ra_{\vd+1}$, we consider the ``coordinate'' map $\Phi(w):\cH\to \cM$ 
		that 
		places a vector first in the $w$-th component in $\notV$ followed by the canonical projection $\notV\to\cM$. 
		The Hilbert space $\cE\subseteq \cM$ is then obtained as the closure of the subspace generated
		by the ranges of the coordinate maps corresponding to words of degree at most $\vd$.  We then
		define operators $Y_1,\dots,Y_\vg$ on $\cE$ via left-multiplication maps. Using the positivity of $\varphi$ on $\wtC_{\vd+1,\vd}$, we show that each $Y_j$ is
		well-defined, bounded, and self-adjoint.
		
		\smallskip
		
		Next, using the coordinate map corresponding to the empty word, we construct a vector
		$\gamma\in \cH\otimes \cE$ that satisfies \eqref{eq:representation formula}.  Moreover, the set $ \{\,q(Y)\gamma : q\in \cA_\vd\,\} $
		is dense in $\cH\otimes \cE$. Finally, using this density together with the assumption that
		$\varphi$ is nonnegative on $\wtC_{\vd+1,\vd}$, we show, by testing against ucp maps, that
		$L(Y)\ge 0$.  
		
		\smallskip
		
		We now carry out this construction in detail. The proof proceeds in five steps.
		
		\subsection{The positive block matrix \texorpdfstring{$S$}{S} and an auxiliary Hilbert space \texorpdfstring{$\cM$}{M}}\label{ssec:4.1}
		We begin by encoding the functional $\varphi$ into a positive block operator matrix $S$ and using $S$ to construct a Hilbert space.
		
		\smallskip
		
		Since $\varphi$ is ultraweak continuous, there exist trace class operators $S_{w}$ $(w \in \la x\ra_{2\vd+2})$ in $ B(\cH)$ such that 
		\[
		\varphi(p) \ =\  \sum\limits_{w \in \la x\ra_{2\vd+2}} {\rm Tr}\, (S_{w} P_{w}),
		\]
		where $p = \sum_{w \in \la x \ra_{2\vd+2}} P_{w} w.$ 
		Denote by $S$ the $N(\vd+1) \times N(\vd+1)$ block matrix whose $(v,w)$ entry is $S_{w^{*}v}.$
		For  $r = \sum_{v \in \la x \ra_{\vd+1}} R_{v} v$
		and $r^\prime = \sum_{w \in \la x \ra_{\vd+1}} R^\prime_{w} w,$ we have
		\begin{equation} \label{eq:block-trace}
			\varphi(r^{*}r^\prime) \ =\ \sum\limits_{v,w\in \la x \ra_{\vd+1}} {\rm Tr}\, (S_{v^{*}w} R_{v}^{*}R^\prime_{w}).
		\end{equation}
		Letting $R$ denote the  row operator $R : \cH^{N(\vd+1)} \to \cH,$ with $R_{u}$ as the $u$-th element,
		gives
		\[
		\varphi(r^{*}r) 
		\ =\ \sum\limits_{v,w\in \la x \ra_{\vd+1}} {\rm Tr}\, (S_{v^{*}w} R_{v}^{*}R_{w}) 
		\ =\  \sum_{v,w\in \la x \ra_{\vd+1}} {\rm Tr}\, (\, [S]_{w,v} [R^{*}R]_{v,w}\,) 
		\ =\  \ \ \ {\rm Tr}\, (SR^{*}R).  
		\]
		Given a psd operator $T \in B (\cH^{N(\vd+1)}),$ factor  $T = R^{*}R$ for some $R: \cH^{N(\vd+1)} \to \cH$ (using $\cH$ is infinite-dimensional), let  $r = \sum R_{u} u,$ where $R_{u}$ is the $u^{\rm th}$ element of the row operator $R$ and note
		\[ 
		{\rm Tr}\, (ST) \ =\  {\rm Tr}\, (SR^*R) \ =\  \varphi (r^{*}r) \ \geq\  0.
		\]
		It follows that $S \succeq 0.$ 
		
		\smallskip
		
		We now use the operator $S$ to define a Hilbert space via a GNS-type construction. Consider the vector space 
		\[
		\notV =\ \bigoplus_{w\in \la x \ra_{\vd+1}} \cH.
		\]
		Equip $\notV$ with the sesquilinear form 
		\begin{align*}	
			\langle (\xi_w)_{w},\,(\eta_v)_{v}\rangle_{\notV}\  := &\ \la S (\xi_{w})_{w}, (\eta_{v})_{v} \ra_{\cH^{N(\vd+1)}} 
			\ =\  \sum\limits_{v \in \la x\ra_{\vd+1}} \Big\la \sum\limits_{w\in \la x \ra_{\vd+1}} [S]_{v,w} \xi_{w},\, \eta_{v}\Big\ra_{\cH} \\
			\ =&\  \sum\limits_{v,w \in \la x \ra_{\vd+1}} \la S_{w^{*}v} \xi _{w}, \eta_{v}\ra_{\cH} 
			\ =\  \sum_{v,w\in \la x \ra_{\vd+1}}\!\langle \xi_w,\,S_{v^ *w}\,\eta_v\rangle_\cH . 
		\end{align*}
		This form is psd by the positivity of $S.$ Let 
		\[
		\cN\ =\ \{z\in \notV:\langle z,z \rangle_{\notV}=0\}
		\]
		denote its subspace of null vectors, and let $\cM$ denote the Hilbert space obtained by the completion of the quotient space $ \notV/\cN. $ Clearly, $\cM$ is separable. 
		Let $\rho: \notV\to \cM$  denote the quotient map (followed by the inclusion of $\notV/\cN$ into $\cM$).
		
		\subsection{The coordinate maps \texorpdfstring{$\Phi(w)$}{P} and the Hilbert space \texorpdfstring{$\cE$}{E}}\label{ssec:4.2}
		We now introduce coordinate maps that allow us to identify coefficient vectors inside the Hilbert space $\cM$ and use them to construct the Hilbert space $\cE$ as a subspace of $\cM.$
		
		\smallskip
		
		For each word $w\in \la x\ra_{\vd+1},$ define a linear map 
		\[
		\Phi(w):\cH\to \notV; \qquad \xi\ \mapsto \ \big[(\delta_{v,w}\xi)_{v\in \la x\ra_{\vd+1}}\,\big], 
		\]
		where $\delta_{v,w}$ denotes the Kronecker delta. Thus, for all $v,w\in \la x\ra_{\vd+1}$ and $\xi,\eta\in \cH$,
		\begin{equation}\label{eq:Phi-kernel}
			\big\langle \Phi(w)\xi,\ \Phi(v)\eta\big\rangle_\notV
			\ =\ \langle \xi,\ S_{v^ *w}\,\eta\rangle_\cH .
		\end{equation}
		For later use, observe, identifying $\Phi (w)$ with $\rho \circ\Phi (w),$ 
		equation~\eqref{eq:Phi-kernel}
		gives  $\Phi(w)^* \Phi(w)=S_{w^*w}.$  Since $S_{w^*w}$ is trace class, 
		each $\Phi(w)$ is a Hilbert--Schmidt operator and moreover,
		\begin{equation}
			\label{eq:Swv}
			\Phi(w)^* \Phi(v) \ =\  S_{v^* w}.
		\end{equation}
		
		We define the Hilbert space $\cE$ as 
		\[
		\cE \ :=\  \overline{\mathrm{span}} \big\{\, (\rho \circ \Phi(w))(h) :\  |w| \le \vd,\ h \in \cH \,\big\} \ \subseteq\  \cM.
		\]
		The Hilbert space $\cE$ will serve as the space on which the operators $Y_1,\dots,Y_\vg$ act.
		
		\subsection{The operator tuple \texorpdfstring{$Y$}{Y}}\label{ssec:E}
		We now define operators corresponding to the noncommuting variables and show that they act as bounded self-adjoint operators on $\cE$.
		
		\smallskip
		
		We consider a subspace of the vector space $\notV.$ Let 
		\[ 
		\Ddom \ :=\ {\rm span}\{\, \Phi(w)\xi:\ w\in \II{\vd},\ \xi\in \cH \}\ \subset\  \notV.
		\]
		Note that the Hilbert space $\cE$ is the closure of $\rho(\Ddom)$ in $\cM.$ For each $ j = 1, \cdots, \vg,$ define a linear map $ \Delta_j:\Ddom\to \notV$ by 
		\[
		\Delta_j(\Phi(w)\zeta) = \Phi(x_j w) \zeta.
		\]
		We first show that $\Delta_j$ behaves well with the null vectors in $\Ddom \cap \cN$.
		Suppose $f=\sum_{w\in \II{\vd}}\Phi(w)\xi_w \in \Ddom\cap \cN.$ %
		Given $v\in \la x\ra_\vd$ and $\eta\in \cH$ an application of
		equation~\eqref{eq:Swv} gives, %
		\[
		\big\langle \Delta_j(f)  ,\ \Phi(v)\eta\big\rangle_\notV \ =\  \sum_{w \in \la x\ra_{\vd}}\langle \xi_w,\ S_{v^ * x_j w}\eta\rangle_\cH 
		\ =\   \sum_{w \in \la x \ra_{\vd}}\langle \xi_w,\ S_{(x_jv)^ * w}\eta\rangle_\cH
		\ =\  \big\langle f,\ \Phi(x_jv)\eta\big\rangle_\notV 
		\ =\  0.
		\]
		It follows that $ \langle \Delta_j f, g\rangle_{\notV} \ =\ 0 $ for $f\in \Ddom\cap \cN$ and $g\in \Ddom.$ 
		
		\smallskip
		
		Let $P_\cE$ denote the projection of
		$\cM$ onto $\cE.$ The computation above says that if $f\in \Ddom\cap \cN,$ then $P_\cE \rho (\Delta_j(f))=0.$
		Hence, for each $ j = 1, \cdots, \vg,$ we obtain a linear map
		$\widehat{\Delta}_j:\Ddom\to \cE$ defined by 
		\begin{equation}
			\label{eq:hatLambda}
			\widehat{\Delta}_j f = P_\cE \rho(\Delta_j f)
		\end{equation}
		that maps $\Ddom\cap \cN$ to $0.$ Finally, for each $j=1, \cdots, \vg,$ we define a linear map
		$Y_j:\Ddom/(\cN\cap \Ddom) \to \cE$ by
		\[
		Y_j \rho(f) = P_\cE \rho(\Delta_j(f)),
		\]
		for $f\in \Ddom.$
		
		\subsubsection{The \texorpdfstring{$Y_j$}{Yj} are bounded} \label{Ybdd}
		The boundedness of the $Y_j$ rests on the following fact. 
		
		\begin{lemma} \label{lem:bounded}
			If $\cD _{L}$ is bounded, then there exists a constant $c>0$ such that $c\pm x_j \in \wtC_\vd$ for any $\vd \geq 0.$ Moreover, the constant $c$ does not depend on $\vd.$
		\end{lemma}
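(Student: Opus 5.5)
The plan is to produce the certificate with polynomials of degree zero, so that it lies in $\wtC_\vd$ for every $\vd\ge 0$ at once. Take $r=0$ and $q=\sqrt{c}\,I_\cH$, a constant polynomial. Then
\[
q^*\pi(L)q \;=\; c\,\pi(I) + c\sum_{k=1}^{\vg}\pi(\bA_k)\,x_k .
\]
Hence it suffices to find, for each $j$ and each sign, a ucp map $\pi_\pm:\oS_L\to B(\cH)$ with
\[
\pi_\pm(\bA_k) \;=\; \pm\,\tfrac1c\,\delta_{jk}\, I_\cH, \qquad k=1,\dots,\vg .
\]
I will look for $\pi_\pm$ of the form $\pi_\pm(a)=f_\pm(a)\,I_\cH$, where $f_\pm$ is a state on $\oS_L$. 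Such a map is ucp because a positive functional on an operator system is automatically completely positive, and tensoring with $I_\cH$ preserves this. Since $q$ and $r$ have degree $0\le \vd$, the constant $c$ will then be independent of $\vd$.

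So the problem reduces to a scalar, level-one convexity statement. Let $\mathcal{S}(\oS_L)$ be the state space of $\oS_L$, and let
\[
K \;:=\; \{\, (s(\bA_1),\dots,s(\bA_\vg)) : s\in \mathcal{S}(\oS_L)\,\}\ \subset\ \R^\vg .
\]
This is the image of a weak-$*$ compact convex set under a continuous affine map, so $K$ is compact and convex. The values are real because the $\bA_k$ are self-adjoint. The required states $f_\pm$ exist exactly when $\pm\frac1c e_j\in K$. It is therefore enough to show that $0$ lies in the interior of $K$: then $K$ contains a ball of some radius $1/c$, and this one $c$ works for all $j$ and both signs.

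To see that $0\in\operatorname{int}K$, first describe the first level of $\cD_L$. For $x\in\R^\vg$, the self-adjoint operator $I+\sum_k x_k\bA_k$ is psd if and only if $s\big(I+\sum_k x_k\bA_k\big)\ge0$ for every state $s$. This holds because states norming the spectrum of a self-adjoint element of $C^*(\oS_L)$ can be restricted to $\oS_L$. Hence
\[
\cD_L(1) \;=\; \{\,x\in\R^\vg : 1+\langle x,y\rangle\ge 0 \ \ \forall\, y\in K\,\}.
\]
Now suppose $0\notin\operatorname{int}K$. By Hahn–Banach separation in $\R^\vg$ (a supporting hyperplane at the boundary point $0$, or strict separation if $0\notin K$), there is $x\neq0$ with $\langle x,y\rangle\ge0$ for all $y\in K$. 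Then $tx\in\cD_L(1)$ for every $t>0$, which contradicts the boundedness of $\cD_L$.

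I expect no serious obstacle in this argument. The only points needing care are the following.
\begin{enumerate}[\rm (a)]
\item The passage from psd-ness of $L(x)$ at scalar points to the state description uses only states on the operator system, not on all of $B(\tcH)$.
\item The rescaling \eqref{eq:scaleA} of the variables changes $c$ only by a fixed factor.
\item One must check that $\pi_\pm$ is indeed unital and completely positive into $B(\cH)$, so that it belongs to $\UCP(\oS_L,B(\cH))$ as the definition of $\wtC_\vd$ requires.
\end{enumerate}
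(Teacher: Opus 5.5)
Your proof is correct, but it takes a different route from the paper's. The paper sets $c=\sup_{X\in\cD_L}\|X_j\|$ and notes $\cD_L\subseteq\cD_{L_j}$ for the scalar pencil $L_j=I-\frac1c x_j$. It then invokes the inclusion theorem for free spectrahedra (\cite[Theorem 5.13]{DDSS17}, or \cite[Theorem 1.1]{Zal17} for infinite-dimensional $\tcH$) to get a ucp map $\pi_j$ with $\pi_j(\bA_i)=-\delta_{ij}\frac1c I_\cH$, so that $\pi_j(L)=I-\frac1c x_j$ lies in the cone.

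You instead exploit the fact that the target pencil is scalar. A ucp map of the form $f(\cdot)\,I_\cH$ needs only a state $f$ on $\oS_L$, and positive functionals are automatically completely positive. So everything reduces to a level-one, finite-dimensional separation argument showing $0\in\operatorname{int}K$.

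What your route buys:
\begin{enumerate}[\rm (a)]
\item It is self-contained, with no appeal to the inclusion theorem.
\item It uses only boundedness of $\cD_L(1)$, not of all levels.
\item It avoids checking that a map on $\oS_L$ prescribed by its values on $\bA_1,\dots,\bA_\vg$ is well defined. That requires $I,\bA_1,\dots,\bA_\vg$ to be linearly independent, which the cited theorem secures through its boundedness hypothesis; in your argument a degenerate $K$ is simply excluded by the separation step.
\end{enumerate}

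What the paper's route buys: it is shorter given the cited theorem, and it yields the natural constant $\sup\|X_j\|$. Your constant is instead the reciprocal of an inradius of $K$. That is harmless, since only existence of $c$ is claimed; moreover, since $0\in K$, the bipolar theorem would recover the sharp level-one constant anyway.

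Your caveat (b) about the rescaling is unnecessary, since the argument applies verbatim to whichever pencil is in force.
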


		\begin{proof}
			Since $\cD_{L}$ is bounded, there exists a constant $c>0$  such that 
			$\sup\limits_{X \in \cD_{L}}\|X_{j} \| \leq c $
			for all $j = 1,\dots, \vg.$ Fix $j \in \{ 1, \dots, \vg\}.$ Let 
			\[
			L_{j} (x) \ =\ I_{\cH} - \frac{1}{c} x_{j}.
			\]
			Since $\cD_{L} \subseteq \cD_{L_{j}},$ an application of  \cite[Theorem 5.13]{DDSS17} (see also \cite[Theorem 1.1]{Zal17}), produces 
			a ucp map $\pi_{j} : \oS_{L} \to \oS_{L_{j}} \subset  B(\cH)$ such that 
			\[
			\pi_{j} (\bA_{i}) \ =\ -\, \delta_{i,j} \frac{1}{c} I_{\cH},
			\]
			where $\delta_{i,j}$ is the Kronecker delta.   It follows that $\pi_j(L) = I-\frac{1}{c}x_j \in \wtC_\vd$ as desired.
		\end{proof}
		
		We now show that $Y_j$ is bounded. For $f \in \Ddom,$ we have
		\[
		\la Y_{j} \rho(f) , \rho(f) \ra_{\cE} \ \leq \ c\, \la \rho(f), \rho(f)\ra_{\cE},
		\]
		with  $c$ as in Lemma~\ref{lem:bounded}.
		Indeed, for $f = \sum\limits_{w \in \la x\ra_{\vd}} \Phi(w) \xi_{w} \in \Ddom,$ %
		\begin{align*}
			\Big\la Y_{j}\, \rho \Big(\sum\limits_{w \in \la x\ra_{\vd}} \Phi(w) \xi_{w}\Big), & \, \rho\Big(\sum\limits_{w \in \la x\ra_{\vd}} \Phi(w) \xi_{w}\Big) \Big\ra_{\cE}
			\ = \ \sum\limits_{v,w \in \la x\ra_{\vd}} \la\, \rho (\Phi(x_{j}w) \xi_{w}), \rho (\Phi(v) \xi_{v}) \ra_{\cM}  \\[3pt]
			\ = &\ \sum\limits_{v,w \in \la x\ra_{\vd}} \la  \Phi(x_{j}w) \xi_{w}, \Phi(v) \xi_{v} \ra_{\notV} 
			\ = \ \sum\limits_{v,w \in \la x \ra_{\vd}}\la \xi_{w}, S_{v^{*}x_{j}w} \xi_{v} \ra_{\cH} \\[3pt]
			\ = &\ \sum\limits_{v,w \in \la x \ra_{\vd}}\la S_{w^{*}x_{j} v}\xi_{w},  \xi_{v} \ra_{\cH}
			\ = \ \sum\limits_{v,w \in \la x \ra_{\vd}} \operatorname{Tr}\, (S_{w^{*} x_{j} v} \, \xi_{w} \xi_{v}^{*}\,)  \\[3pt]
			\ = & \ \sum\limits_{v,w \in \la x \ra_{\vd}}  \varphi\, ( \xi_{w} \xi_{v}^{*}\,w^{*}x_{j}v)
			\ = \  \varphi\, \Big( \sum\limits_{v,w \in \la x \ra_{\vd}} \, \xi_{w} \xi_{v}^{*}\,w^{*} x_{j} v \Big)  \\[3pt]
			\ = &\ \ \varphi\, \Big( \sum\limits_{v,w \in \la x \ra_{\vd}} R_{w}^{*}R_{v} w^{*}x_{j} v \Big) 
			\ = \ \varphi(r^{*}x_{j}r),
		\end{align*}
		where $r = \sum_{w \in \la x \ra_{\vd}} R_{w} w,$ and $R_{w}$ is the {rank-one operator} 
		that  maps $h \in \cH$ to {$\la h , \xi_{w}\ra\, e$  for some fixed unit vector $e$ in $\cH$.}  
		An application of Lemma~\ref{lem:bounded} (with $c-x_j \in \wtC_\vd$)  gives,
		\[
		\Big\la Y_{j}\, \rho \Big(\sum\limits_{w \in \la x\ra_{\vd}} \Phi(w) \xi_{w}\Big),\, \rho \Big(\sum\limits_{w \in \la x\ra_{\vd}} \Phi(w) \xi_{w}\Big) \Big\ra \ =\ \varphi (r^{*} x_{j} r) \ \leq\  c\, \varphi (r^{*}r).
		\]
		Essentially the same calculation also gives,
		\[
		\varphi(r^{*}r) \ =\  	\Big\| \rho \Big(\sum\limits_{w \in \la x\ra_{\vd}} \Phi(w) \xi_{w}\Big) \Big\|^{2} .
		\]
		Hence  $\la Y_{j} \rho (f) , \rho(f) \ra_{\cE} \leq c\, \la \rho(f), \rho(f)\ra_{\cE}$ for all $f \in \Ddom.$ 
		The same argument (with instead $c+x_j\in\wtC_\vd$) also shows $-c\, \la f,f\ra_\cE \le \la Y_{j} f ,f \ra_{\cE}.$
		Since $\langle Y_j f,f\rangle$ is real for all $f,$ it follows from polarization that $Y_{j}$ is self-adjoint. Because
		$\vert \langle Y_j f,f\rangle \vert \le  c\|f\|$ for all $f$ and $Y_j$ is self-adjoint, $Y_j$ is bounded with $\|Y_j\|\le c.$
		Finally, $Y_j$ extends to a bounded self-adjoint operator on $\cE.$
		
		\smallskip
		
		We now give the details of the polarization argument sketched above.
		For $f,g \in \Ddom,$ by the polarization identity for sesquilinear forms we get
		\[
		\la Y_{j} \rho(f), \rho(g) \ra \ =\ \frac{1}{4}\sum\limits_{k=0}^{3} \iota^{k} \big\la\, Y_{j} \rho (f+\iota^{k}g), \rho (f+\iota^{k} g)  \,\big\ra.
		\]
		Thus,
		\[
		|\,\la Y_{j} \rho(f), \rho(g) \ra\,| \ \leq\  \frac{c}{4} \sum\limits_{k=0}^{3} \| \rho( f+\iota^{k}g) \|^{2} \ =\ c\, (\|\rho(f)\|^{2} + \|\rho(g)\|^{2}). 
		\]
		We claim that $|\,\la Y_{j} \rho( f), \rho(g) \ra\,| \leq 2 c \, \|\rho(f)\|\, \|\rho(g)\|$ for all $f,g \in \Ddom.$ If $\rho(f) = 0,$ then this is trivially true. Assume $\rho(f) \neq 0.$ For any real number $t > 0,$ we have 
		\[
		|\,\la Y_{j} \rho(f), \frac{1}{t} \rho(g) \ra\,| \ \leq\   c (\|\rho(f)\|^{2} + \frac{1}{t^{2}}\|\rho(g)\|^{2}). 
		\]
		This implies that 
		\[
		|\,\la Y_{j} \rho(f), \rho(g) \ra\, | \ \leq\  c (t \|\rho(f)\|^{2} + \frac{1}{t} \|\rho(g)\|^{2})
		\]
		for all $t >0.$ By the AM-GM inequality, we get that  
		\[
		t \|\rho(f)\|^{2} + \frac{1}{t} \|\rho(g)\|^{2}  \leq 2 \, \|\rho(f)\|\, \|\rho(g)\|,
		\]
		where equality holds when $t = \|\rho(g)\| /\|\rho(f)\|.$ Finally, we get that 
		\[
		|\,\la Y_{j} \rho(f), \rho(g) \ra\, | \ \leq\ 2 c \, \|\rho(f)\|\, \|\rho(g)\|
		\]
		for all $f,g \in \Ddom.$ This proves that $Y_{j}$ are bounded on $\Ddom/(\cN\cap \Ddom),$ and hence, can be extended to bounded operators on $\cE.$ 
		
		\subsubsection{The \texorpdfstring{$Y_j$}{Y} are self-adjoint} 
		If  $v,w\in \II{\vd}$ and $\xi,\eta\in \cH,$  then 
		\begin{align*}
			\big\langle Y_{j} \rho(\Phi(w)\xi),\rho(\Phi(v)\eta)\big\rangle_\cE 
			\ = &\ \big\langle \rho (\Delta_{x_{j}}\Phi(w)\xi),P_{\cE} \rho(\Phi(v)\eta)\big\rangle_\cM 
			\ = \ \big\langle \Phi(x_{j}w)\xi,\Phi(v)\eta\big\rangle_\notV \\
			\ = & \ \langle \xi,S_{v^ *x_j w}\eta\rangle_\cH 
			\ = \ \langle \xi,S_{(x_j v)^ * w}\eta\rangle_\cH 
			\ = \ \big\langle \Phi(w)\xi,\ \Delta_{x_{j}}\Phi(v)\eta\big\rangle_{\notV} \\ 
			\ = & \  \big\langle Y_{j}^{*}\rho(\Phi(w)\xi), \rho(\Phi(v)\eta) \big\rangle_\cE.
		\end{align*}
		This proves that $Y_{j}$ are self-adjoint.
		
		\subsection{The representing vector and evaluation}\label{ssec:vec} 
		We now construct a vector $\gamma \in \cH \otimes \cE$ that realizes the functional $\varphi$.
		
		\smallskip
		
		Let $HS(\cH,\cE)$ denote the Hilbert-Schmidt operators from the Hilbert space $\cH$
		to the Hilbert space $\cE.$ For a fixed orthonormal basis $(e_n)_n$ of $\cH$, the \df{vectorization map}, $\mathrm{vec}:HS(\cH,\cE)\to \cH\otimes \cE$ is defined, for $T\in HS(\cH,\cE),$ by
		\[
		\mathrm{vec}(T)\ =\ \sum_n e_n\otimes Te_n .
		\]
		
		\smallskip
		
		We define
		\[
		\gamma\ :=\ \mathrm{vec}\big(P_{\cE} \rho\, \Phi(\varnothing)\big)\ =\ \sum\limits_{n} e_{n} \otimes P_{\cE} \rho (\Phi(\varnothing) e_{n}) \ \in\ \cH\otimes \cE, 
		\]
		where, as usual, $\varnothing$ is the empty word. Consider a word $w=x_{i_1}\cdots x_{i_k}$ with $k\le \vd+1.$ We claim that 
		\[
		Y^{w} P_{\cE} \rho\, \Phi(\varnothing) \ =\  P_{\cE} \rho\,\Phi(w).
		\]
		Indeed, for $\xi \in \cH,$ a word $|v|\le \vd$, and $1\le j\le \vg,$ 
		\[ 
		Y_j P_{\cE} \rho (\Phi(v)\xi) \ =\ Y_{j} \rho(\Phi(v) \xi) \ =\  P_{\cE} \rho(\Delta_{x_j} \Phi(v)\xi) \ =\  P_{\cE} \rho( \Phi(x_jv) \xi),
		\]
		since $\rho(\Phi(v) \xi) \in \cE.$ Hence, a finite induction argument gives,
		\begin{align*} 
			Y^{w} P_{\cE} \rho(\Phi(\varnothing) \xi) 
			\ = & \  Y_{i_{1}} \dots Y_{i_{k}} \rho(\Phi(\varnothing) \xi) \\
			\ = & \  Y_{i_{1}} \dots Y_{i_{k-1}} P_{\cE} \rho(\Delta_{x_{k}} \Phi(\varnothing) \xi)\\
			\ = & \  Y_{i_{1}} \dots Y_{i_{k-1}} \rho(\Phi(x_{k}) \xi)\\
			\ = & \  Y_{i_{1}} \rho(\Phi(x_{i_{2}} \cdots x_{i_{k}})\xi) \\
			\ = & \ P_{\cE} \rho(\Phi(w) \xi).
		\end{align*}
		Thus for any word $w \in \la x\ra_{\vd+1}$, we have
		\begin{equation} \label{eq:shift-gamma}
			(I_\cH\otimes Y^{w})\,\gamma 
			\ =\  (I_\cH\otimes Y^{w}) \Big( \sum\limits_{n} e_{n} \otimes P_{\cE} \rho(\Phi(\varnothing) e_{n})\Big) 
			\ =\  \sum\limits_{n} e_{n} \otimes P_{\cE} \rho(\Phi(w) e_{n})   
			\ =\  \mathrm{vec}\big(P_{\cE} \rho\, \Phi(w)\big).
		\end{equation}
		
		\smallskip
		
		Let $r=\sum_{w\in \II{\vd+1}}R_w\,w$ and $q=\sum_{v\in \II{\vd}}Q_v\,v$. 
		From the standard vectorization identity
		\[
		\big\langle (T\otimes I)\mathrm{vec}(A),\ (R\otimes I)\mathrm{vec}(B)\big\rangle 
		= \operatorname{Tr}\!\big(TA^{*} B R^{*}\big),
		\]
		and using \eqref{eq:Phi-kernel} and \eqref{eq:shift-gamma}, we obtain
		\begin{align*}
			\big\langle r(Y)\gamma,\ q(Y)\gamma\big\rangle 
			&= \sum_{w\in \II{\vd+1}} \sum_{v \in \la x\ra_{\vd}}
			\big\langle (R_w\otimes I)\,\mathrm{vec}(P_{\cE} \rho\, \Phi(w)),\ 
			(Q_v\otimes I)\,\mathrm{vec}(P_{\cE} \rho\, \Phi(v))\big\rangle \\
			&= \sum_{w\in \II{\vd+1}} \sum_{v \in \la x\ra_{\vd}} 
			\operatorname{Tr}\!\big(R_{w}\, (\rho\, \Phi(w))^{*} P_{\cE} \rho\, \Phi(v)\,Q_{v}^{*}\big) \\
			&= \sum_{w\in \II{\vd+1}} \sum_{v \in \la x\ra_{\vd}} \sum_{n} 
			\la P_{\cE} \rho\, \Phi(v) Q_{v}^{*}e_{n},\ \rho\,\Phi(w) R_{w}^{*} e_{n} \ra \\
			&= \sum_{w\in \II{\vd+1}} \sum_{v \in \la x\ra_{\vd}} \sum_{n} 
			\la Q_{v}^{*}e_{n},\ S_{w^{*}v} R_{w}^{*} e_{n} \ra .
		\end{align*}
		
		We now rewrite the last expression in terms of trace and use the definition of the block matrix $S$ to recover the linear functional:
		
		\begin{align*}
			\big\langle r(Y)\gamma,\ q(Y)\gamma\big\rangle
			&= \sum_{w\in \II{\vd+1}} \sum_{v \in \la x\ra_{\vd}} \sum_{n} 
			\la R_{w} S_{v^{*}w} Q_{v}^{*}e_{n},\ e_{n} \ra \\
			&= \sum_{w\in \II{\vd+1}} \sum_{v \in \la x\ra_{\vd}} 
			\operatorname{Tr}\!\big(R_{w} S_{v^{*}w} Q_{v}^{*}\big) \\
			&= \sum_{w\in \II{\vd+1}} \sum_{v \in \la x\ra_{\vd}} 
			\operatorname{Tr}\!\big(S_{v^{*}w} Q_{v}^{*} R_{w}\big).
		\end{align*}
		By \eqref{eq:block-trace}, the right-hand side equals $\varphi(q^* r)$, and hence
		\[
		\varphi(q^* r) = \langle r(Y)\gamma,\ q(Y)\gamma\rangle,
		\]
		for all $r \in \cA_{\vd+1}$ and $q \in \cA_{\vd}$.

		\subsection{Positivity of \texorpdfstring{$L(Y)$}{L(Y)}} First we show that the subspace
		\[
		\{\, q(Y)\gamma : q \in \cA _\vd \,\} \ \subseteq \ \cH \otimes \cE
		\]
		is dense in $\cH \otimes \cE$.
		
		\smallskip
		
		Fix $k \in \N$, a word $w \in \la x\ra_\vd$, and a vector $h \in \cH$. 
		Let $q \in \cA_\vd$ denote the polynomial
		\[
		q = e_k h^*\, w.
		\]
		Using \eqref{eq:shift-gamma} in the second equality, we compute:
		\begin{align*}
			q(Y)\gamma 
			&= ( e_k h^*\otimes I)\, \big( (I \otimes Y^w)\,\gamma \big) \\
			&= (e_k h^* \otimes I)\, \mathrm{vec}\!\big(P_{\cE} \rho\,\Phi(w)\big) \\
			&= \mathrm{vec}\!\big(P_{\cE} \rho\,\Phi(w)\, h e_k^* \big) \\
			&= \sum_n  e_n \otimes P_{\cE} \rho\,\Phi(w)\, h\, e_k^* e_n \\
			&= e_k \otimes P_{\cE} \rho\,\Phi(w)\, h.
		\end{align*}
		Since vectors of the form $P_{\cE}\rho\,\Phi(w)\, h$ are dense in $\cE$, and the vectors $(e_k)$ span $\cH$, it follows that
		\[
		\{\, q(Y)\gamma : q \in \cA_\vd \,\}
		\]
		is dense in $\cH \otimes \cE$.
		
		\smallskip
		
		We now prove that $L(Y) \ge 0$. By assumption, the linear functional $\varphi$ satisfies
		\[
		\varphi(q^{*} \pi(L)\, q) \ \ge\  0
		\]
		for all $q \in \cA_{\vd}$ and $\pi \in \UCP(\oS_{L},  B(\cH))$. Fix such a $\pi$. Expanding $\pi(L)$ and using linearity of $\varphi$, we obtain
		\begin{equation} \label{eq:L-positivity}
			\varphi(q^{*} \pi(L) q)
			\ =\  \varphi(q^{*}q) 
			+ \sum_{j=1}^{\vg} \varphi\big(q^{*} \pi(\bA_j) x_j q\big)
			\ \ge\  0,
			\qquad q \in \cA_\vd.
		\end{equation}
		We now apply the representation formula from Subsection~\ref{ssec:vec}. 
		For the first term,
		\[
		\varphi(q^*q) 
		\ =\  \big\langle q(Y)\gamma,\ q(Y)\gamma \big\rangle_{\cH\otimes\cE}.
		\]
		For each $j = 1, \cdots, \vg$, define $q_j := (\pi(\bA_j)x_j)q \in \cA_{\vd+1}$. Then
		\[
		\varphi(q^{*} \pi(\bA_j) x_j q)
		\ =\  \varphi(q^{*} q_j) 
		\ =\  \big\langle q_j(Y)\gamma,\ q(Y)\gamma \big\rangle_{\cH\otimes\cE} 
		\ =\  \big\langle (\pi(\bA_j) \otimes Y_j)\, q(Y)\gamma,\ q(Y)\gamma \big\rangle_{\cH\otimes\cE}.
		\]
		Substituting into \eqref{eq:L-positivity}, we obtain
		\[
		\big\langle q(Y)\gamma,\ q(Y)\gamma \big\rangle
		+ \sum_{j=1}^{\vg} 
		\big\langle (\pi(\bA_j) \otimes Y_j)\, q(Y)\gamma,\ q(Y)\gamma \big\rangle
		\ \ge\  0.
		\]
		Equivalently,
		\[ 
		\Big\langle 
		\Big(I_{\cH\otimes\cE} + \sum_{j=1}^{\vg} (\pi(\bA_j) \otimes Y_j)\Big)
		q(Y)\gamma,\ q(Y)\gamma 
		\Big\rangle_{\cH\otimes\cE} \ \geq \ 0.
		\]
		Since
		\[
		I_{\cH\otimes\cE} + \sum_{j=1}^{\vg} (\pi(\bA_j) \otimes Y_j)
		= (\pi \otimes \mathrm{id})(L(Y)),
		\]
		this shows that
		\[
		\big\langle (\pi \otimes \mathrm{id})(L(Y))\,\zeta,\ \zeta \big\rangle \ \ge\  0,
		\]
		for all $\zeta$ of the form $\zeta = q(Y)\gamma$. Since the subspace $\{\, q(Y)\gamma : q \in \cA_\vd \,\}$ is dense in $\cH \otimes \cE,$ it follows that $(\pi \otimes \mathrm{id})(L(Y)) \succeq 0$. As $\pi \in \UCP(\oS_L,  B(\cH))$ was arbitrary, $L(Y) \succeq 0$.
		\qed 
		
		\section{Proof of Theorem \ref{t:main1}} \label{sec:proofoft1}
		Since $L$ is a monic pencil, 
		$\cD_L$ has nonempty interior, i.e., it contains a neighborhood of $0$. 
		Thus the positivity of $p$ on $\cD_L$ implies that $p$ is self-adjoint. Thus,  Theorem \ref{t:main1} can be reformulated as the following proposition. 
		
		\begin{prop}
			\label{prop:SOS}
			Suppose $p \in \cA_{2\vd+1}$ is self-adjoint. If $p \notin \wtC_{\vd}$, then there exist a finite-dimensional Hilbert space $\cE_n$, a self-adjoint $\vg$-tuple $X = (X_1,\dots,X_{\vg})$ on $\cE_n$, and a vector $\gamma_n \in \cH \otimes \cE_n$ such that
			\[
			L(X) \succeq 0,
			\qquad
			\la p(X)\gamma_n,\gamma_n\ra < 0.
			\]
		\end{prop}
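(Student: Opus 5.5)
The plan is to run the Hahn--Banach/GNS scheme first for bounded $\cD_L$, then reduce the general case to it, and finally compress to finite dimensions.

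\emph{Bounded case.} Assume first that $\cD_L$ is bounded. Work in $\cA_{2\vd+2}$ with the product ultraweak topology. By Proposition~\ref{prop:closedcone}, $\wtC_{\vd+1,\vd}$ is a closed convex cone. I also need $p\notin \wtC_{\vd+1,\vd}$. This should follow from $\wtC_{\vd,L}=\wtC_{\vd+1,\vd}\cap\cA_{2\vd+1}$ (the reduction lemma invoked in the proof of Proposition~\ref{prop:closedness of bounded cone}), because $p$ has degree at most $2\vd+1$.

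Hahn--Banach separation in this locally convex space then gives an ultraweakly continuous functional $\varphi$ with $\varphi\ge0$ on $\wtC_{\vd+1,\vd}$ and $\operatorname{Re}\varphi(p)<0$. Replacing $\varphi$ by $\frac12(\varphi+\varphi^\natural)$, where $\varphi^\natural(s)=\overline{\varphi(s^*)}$, makes it hermitian. This preserves nonnegativity on the cone, since elements of the cone are self-adjoint, and since $p=p^*$ it yields $\varphi(p)<0$.

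Theorem~\ref{thm:GNS} now supplies a separable $\cE$, a self-adjoint tuple $Y$ with $L(Y)\succeq0$, and $\gamma\in\cH\otimes\cE$ with $\la p(Y)\gamma,\gamma\ra=\varphi(p)<0$.

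\emph{Compression to finite dimensions.} To pass from $Y$ on $\cE$ to matrices, choose finite-rank projections $P_n\uparrow I_\cE$ and set $\cE_n=P_n\cE$ and $X=P_nYP_n|_{\cE_n}$. Since $L$ is linear,
\[
L(X)=(I\otimes P_n)L(Y)(I\otimes P_n)\big|_{\tcH\otimes\cE_n}\succeq0 .
\]
The map $p$ is not linear, so $p(X)$ is not a compression of $p(Y)$. However, as $n\to\infty$ the compressions converge strongly to $Y$ on bounded sets, so each monomial satisfies $(P_nYP_n)^w\to Y^w$ strongly. Because $p$ has finitely many bounded coefficients $P_w$, it follows that $(P_w\otimes (P_nYP_n)^w)\gamma\to(P_w\otimes Y^w)\gamma$.

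Let $\gamma_n=(I\otimes P_n)\gamma$. Then $\la p(X)\gamma_n,\gamma_n\ra\to\la p(Y)\gamma,\gamma\ra<0$, so the inequality holds for large $n$. Strong convergence of products of uniformly bounded, strongly convergent nets is standard, so this step is routine.

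\emph{Unbounded case (main obstacle).} Here Theorem~\ref{thm:GNS} is unavailable. The approach is to enlarge the pencil so its domain becomes bounded: set $L_N=L\oplus\bigoplus_j\big(I\pm\tfrac1N x_j\big)$. Then $\cD_{L_N}=\cD_L\cap\{\|X_j\|\le N\}$ is bounded, and the operator system $\oS_{L_N}$ consists of pairs $(a,\text{scalars})$.

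The goal is to show that $p\notin\wtC_{\vd,L_N}$ for some $N$. Granting this, the bounded case produces $X\in\cD_{L_N}\subseteq\cD_L$ with $\la p(X)\gamma_n,\gamma_n\ra<0$.

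Suppose instead that $p\in\wtC_{\vd,L_N}$ for every $N$. Then $p=r_N^*r_N+q_N^*\pi_N(L_N)q_N$, and $\pi_N(L_N)$ is a monic pencil whose coefficients $D_{N,j}=\pi_N(\bA_j\oplus(\pm\tfrac1N))$ lie in a fixed finite-dimensional operator system. This is precisely the situation of Lemma~\ref{lem:pre-bounded-cone}, with $\oS$ spanned by the $\bA_j$ together with the extra scalar directions, and with the pencils $\penM_N$ varying. As $N\to\infty$ the pencils $\penM_N$ converge to one whose $x_j$-coefficients come from $\pi(\bA_j)$, i.e.\ to $\pi(L)$ for a suitable ucp $\pi$.

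The hypothesis $\penM_N(A)\succeq\frac12$ is arranged by the scaling of $A$, after possibly rescaling once more uniformly in $N$. The hypotheses also require the coefficients to converge in norm, so I would pass to a subnet and absorb $\pi_N$ into a factorization through $\oS_L\oplus\C^{2\vg}$, taking care that the $1/N$ parts vanish in the limit. The lemma then gives $p\in\wtC_{\vd,L}$, a contradiction.

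This limiting step is the delicate part, specifically arranging a fixed operator system and norm-convergent coefficients. If it fails, the fallback is to approximate $L$ by bounded pencils and invoke the closedness argument directly.
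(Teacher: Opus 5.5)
Your overall route is the paper's. In the bounded case you separate from $\wtC_{\vd+1,\vd}$ and apply Theorem~\ref{thm:GNS}. For unbounded $\cD_L$ you adjoin the blocks $I\pm\frac1N x_j$, which are unitarily equivalent to the paper's $\frac1n S_j$, and show $\bigcap_N\wtC_{\vd,L_N}=\wtC_{\vd,L}$ via Lemma~\ref{lem:pre-bounded-cone}.

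Your compression step differs from the paper's and is correct; it is in fact more careful. Letting $P_n\uparrow I_\cE$ and using strong convergence of $(P_nYP_n)^w$ handles the nonlinearity of $p$. The paper instead asserts $\la p(X)\gamma_n,\gamma_n\ra=\la p(Y)\gamma_n,\gamma_n\ra$ for $\cE_n=\operatorname{span}\{f_1,\dots,f_n\}$, which is not automatic once $\deg p\ge 2$.

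The genuine gap is the last step of the unbounded case. To invoke Lemma~\ref{lem:pre-bounded-cone} you need one fixed operator system, for example $\oS=\operatorname{span}\{I,\ \bA_j\oplus0,\ 0\oplus(e_{j,+}-e_{j,-})\}$, where $e_{j,\pm}$ are the diagonal matrix units of the added $\C^{2\vg}$ block. Take $D_{N,j}=\bA_j\oplus\frac1N(e_{j,+}-e_{j,-})$, which converges in norm to $\bA_j\oplus0$, and extend each $\pi_N$ from $\oS_{L_N}$ to $\oS$ by Arveson. Your $D_{N,j}=\pi_N(\cdots)$ live in $B(\cH)$, not in a finite-dimensional system.

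The lemma then outputs $p=r^*r+q^*\pi(\penM)q$ with $\pi\in\UCP(\oS,B(\cH))$ and $\penM=I+\sum_j(\bA_j\oplus0)x_j$. Your claim that this is ``$\pi(L)$ for a suitable ucp $\pi$'' on $\oS_L$ is unjustified. The natural candidate is $c_0I+\sum_jc_j\bA_j\mapsto\pi\big((c_0I+\sum_jc_j\bA_j)\oplus c_0I\big)$, and it need not even be positive. For unbounded $\cD_L$ one can have $c_0<0$ with $c_0I+\sum_jc_j\bA_j\succeq0$. For instance, take $\bA_1=\mathrm{diag}(1,2)$, $c_0=-\frac12$, $c_1=1$, and let $\pi$ be a coordinate state of the added block times $I_\cH$. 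So the contradiction $p\in\wtC_{\vd,L}$ is not reached.

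The missing idea is Lemma~\ref{lem:MntoM}\ref{i:MntoM:ii}:
\begin{enumerate}[\rm (1)]
\item Extend $\pi$ to $\operatorname{span}(\oS\cup\{I\oplus0\})$.
\item Observe that $\psi(a)=\pi(a\oplus0)$ is completely positive, but not unital, on $\oS_L$.
\item Factor $\psi=T^*\widetilde\pi T$ with $\widetilde\pi$ ucp, using Lemma~\ref{l:cp-to-ucp}.
\item Split $I=\pi(I\oplus0)+\pi(0\oplus I)$ to obtain
\[
q^*\pi(\penM)q=(Tq)^*\widetilde\pi(L)(Tq)+\big(\pi(0\oplus I)^{1/2}q\big)^*\big(\pi(0\oplus I)^{1/2}q\big).
\]
\item Merge the extra square with $r^*r$ using $\cH\oplus\cH\cong\cH$.
\end{enumerate}
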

		
		The proof of Proposition \ref{prop:SOS} proceeds in two stages. First, we reduce to a restricted setting. We then prove the proposition under this restriction.
		
		\subsection{Reductions} \label{ssec:relax}
		We begin with a simple degree reduction.
		
		\begin{lemma}
			\label{lem:reduction}
			If $p \in \cA_{2\vd+1}$ and $p \in \wtC_{\vd+1,\vd}$, then $p \in \wtC_{\vd}$.
		\end{lemma}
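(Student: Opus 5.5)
The plan is to compare top-degree coefficients. Suppose $p\in\cA_{2\vd+1}$ and $p=r^*r+q^*\pi(L)q$ with $r\in\cA_{\vd+1}$, $q\in\cA_\vd$ and $\pi\in\UCP(\oS_L, B(\cH))$. Since $\pi(L)$ is a pencil of degree one and $q$ has degree at most $\vd$, the term $q^*\pi(L)q$ lies in $\cA_{2\vd+1}$. Hence $r^*r=p-q^*\pi(L)q\in\cA_{2\vd+1}$, so every coefficient of $r^*r$ on a word of length $2\vd+2$ must vanish. The goal is to show that this forces all coefficients $R_u$ of $r$ with $|u|=\vd+1$ to be zero, so that in fact $r\in\cA_\vd$ and $p\in\wtC_\vd$ with the same $q$ and $\pi$.

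Write $r=\sum_{|u|\le \vd+1}R_u u$. Then
\[
r^*r \ =\ \sum_{u,v} R_u^*R_v\, u^*v .
\]
A word $u^*v$ has length $2\vd+2$ exactly when $|u|=|v|=\vd+1$. The key (elementary) observation is that in the free monoid the map $(u,v)\mapsto u^*v$ is injective on pairs with $|u|=|v|=\vd+1$: the first $\vd+1$ letters of $u^*v$ recover $u^*$ (hence $u$), and the last $\vd+1$ letters recover $v$. Therefore the coefficient of $r^*r$ at the word $u^*v$ with $|u|=|v|=\vd+1$ is exactly the single operator $R_u^*R_v$, with no cancellation from other pairs.

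Setting these coefficients equal to zero and taking $v=u$ gives $R_u^*R_u=0$, hence $R_u=0$ for every $|u|=\vd+1$. Thus $r\in\cA_\vd$, and the original representation already exhibits $p\in\wtC_\vd$. I do not expect a real obstacle here. The only points to state carefully are that $q^*\pi(L)q$ contributes nothing in degree $2\vd+2$, and the uniqueness of the factorization $u^*v$ of a word of length $2\vd+2$ into two halves of length $\vd+1$. Neither uses $\cH$ infinite-dimensional or any topology.
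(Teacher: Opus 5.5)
Your proof is correct and follows the same route as the paper: $q^*\pi(L)q$ has degree at most $2\vd+1$, so the degree-$(2\vd+2)$ part of $r^*r$ must vanish, which forces $r\in\cA_\vd$. The one difference is that you spell out the step the paper leaves implicit: since $u^*v$ with $|u|=|v|=\vd+1$ factors uniquely, the coefficient of $u^*u$ in $r^*r$ is exactly $R_u^*R_u$, so each $R_u$ with $|u|=\vd+1$ vanishes.
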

		
		\begin{proof}
			Since $p \in \wtC_{\vd+1,\vd}$, we can write
			\[
			p \ =\  r^{*}r + q^{*} \pi(L) q
			\]
			for some $r \in \cA_{\vd+1}$ and $q \in \cA_{\vd}$. The term $q^{*} \pi(L) q$ has degree at most $2\vd+1$, and hence cannot cancel any degree $2\vd+2$ contribution coming from $r^{*}r$, since $p$ has degree at most $2\vd+1.$  It follows that $r \in \cA_{\vd}$ and thus
			$p\in \wtC_\vd.$
		\end{proof}
		
		We now show that if Proposition \ref{prop:SOS} holds under the additional assumption that $\cD_L$ is bounded, then it also holds when $\cD_L$ is unbounded. 
		
		\smallskip
		
		Let $S = (S_{1}, \dots, S_{\vg})$ denote the  $\vg$-tuple of $2\vg\times 2\vg$ self-adjoint matrices where the  $(2j-1,2j)$ and $(2j,2j-1)$ entries of $S_{j}$ are $1$, and all other entries of $S_{j}$ are $0.$ 
		Let $\oS$ denote the operator system spanned by
		\[
		\{I_{\tcH}\otimes I_{2\vg},\, \bA_{1} \oplus 0,\, \cdots, \bA_{\vg} \oplus 0,\, 0 \oplus S_{1},\, \dots ,\,0 \oplus S_{\vg} \}.
		\]
		For positive integers $n$ and $1\le j\le\vg,$ let
		\[
		D_{n,j} \ =\   \bA_j \oplus  \frac{1}{n} S_j  \ \in \ \oS
		\]
		and let $\penM_n$ denote the monic linear pencil,
		\[
		\penM_n \ =\  I +\sum_{j=1}^\vg D_{n,j} x_j.
		\]
		There is an $N$ sufficiently large so that $\penM_n(A)\succeq \frac12$ for all $n\ge N.$  From here on
		we consider only $n\ge N.$ The sequence $(\penM_n)$ converges coefficient-wise in the operator norm to
		the monic linear pencil
		\[
		\penM \ =\  I + \sum_{j=1}^\vg D_j x_j,
		\]
		where $D_j = \bA_j\oplus 0.$
		
		\smallskip
		
		Since $\oS_\penM$ and $\oS_{\penM_n}$ are both subsets of $\oS,$ and a cp map on either space with values
		in $ B(\cH)$ extends to a cp map on $\oS,$
		\[
		\wtC_{\vd,\penM_0} \ =\  \{\,r^*r + q^* \pi(\penM_0) q:\ r,q\in \cA_\vd,  \, \pi\in \UCP(\oS, B(\cH)) \}, 
		\]
		for $\penM_0=\penM$ or $\penM_n.$
		
		\begin{lemma} 
			\label{lem:MntoM}
			With notations as above, 
			\begin{enumerate}[\rm (1)]\itemsep=6pt
				\item \label{i:MntoM:i}
				$\cD_L=\cD_\Lambda;$ 
				\item \label{i:MntoM:ii}
				$\wtC_{\vd,L}=\wtC_{\vd,\penM};$ and 
				\item  \label{i:MntoM:iii}
				$\cap_{n \ge N} \wtC_{\vd, \penM_{n}} \ =\ \wtC_{\vd,\penM}.$
			\end{enumerate}
		\end{lemma}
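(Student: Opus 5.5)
The plan is to handle the three items separately. Items (1) and (2) are essentially formal. Item (3) splits into an easy inclusion, which comes from comparing free spectrahedra via \cite[Theorem 5.13]{DDSS17} (see also \cite[Theorem 1.1]{Zal17}), and a harder inclusion, which comes from the compactness statement Lemma~\ref{lem:pre-bounded-cone}.

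For \ref{i:MntoM:i}, compute directly. Since $D_j=\bA_j\oplus 0$, for any self-adjoint tuple $X$ we have, after the canonical shuffle,
\[
\penM(X)\ =\ L(X)\oplus (I_{2\vg}\otimes I).
\]
Hence $\penM(X)\succeq 0$ if and only if $L(X)\succeq 0$, so $\cD_L=\cD_\penM$.

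For \ref{i:MntoM:ii}, I would use the mechanism from the proof of Lemma~\ref{lem:bounded}. An inclusion $\cD_{L_1}\subseteq \cD_{L_2}$ of monic pencils yields, by \cite[Theorem 5.13]{DDSS17}/\cite[Theorem 1.1]{Zal17}, a ucp map $\tau:\oS_{L_1}\to\oS_{L_2}$ with $\tau(L_1)=L_2$ coefficientwise. If $\pi\in\UCP(\oS_{L_2}, B(\cH))$, then $\pi\circ\tau$ is ucp on $\oS_{L_1}$ and $(\pi\circ\tau)(L_1)=\pi(L_2)$. This gives $\wtC_{\vd,L_2}\subseteq \wtC_{\vd,L_1}$ whenever $\cD_{L_1}\subseteq\cD_{L_2}$. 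Applying it in both directions with \ref{i:MntoM:i} gives $\wtC_{\vd,L}=\wtC_{\vd,\penM}$. The $L\to\penM$ direction can also be seen concretely: compression to the $\tcH$ summand is ucp and sends $\bA_j\oplus 0\mapsto \bA_j$.

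For \ref{i:MntoM:iii}, first note that
\[
\penM_n(X)\ =\ L(X)\oplus\Big(I+\tfrac1n\sum_j S_j\otimes X_j\Big).
\]
So $\cD_{\penM_n}\subseteq\cD_L=\cD_\penM$. The principle above then gives $\wtC_{\vd,\penM}\subseteq\wtC_{\vd,\penM_n}$ for every $n\ge N$, which is the inclusion ``$\supseteq$''. For ``$\subseteq$'', fix $p\in\bigcap_n\wtC_{\vd,\penM_n}$ and write
\[
p\ =\ r_n^*r_n+q_n^*\pi_n(\penM_n)q_n ,
\]
where $\pi_n\in\UCP(\oS, B(\cH))$ by the extension remark preceding the lemma. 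Now apply Lemma~\ref{lem:pre-bounded-cone} in its $\Sigma_\vd$ version to the constant sequence $p_n=p$, with the following inputs:
\begin{itemize}
\item the operator system $\oS$, spanned by $I$, the $\bA_j\oplus0$ and the $0\oplus S_j$, so $\vh=2\vg\ge\vg$;
\item the pencils $\penM_n$, which satisfy $\penM_n(A)\succeq\frac12$ for $n\ge N$;
\item coefficients $D_{n,j}\to D_j$ in norm.
\end{itemize}
The sequence $p_n=p$ is trivially norm bounded and ultraweakly convergent. The lemma returns
\[
p\ =\ r^*r+q^*\pi(\penM)q
\]
with $\pi\in\UCP(\oS, B(\cH))$, and restricting $\pi$ to $\oS_\penM$ shows $p\in\wtC_{\vd,\penM}$.

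The main obstacle I anticipate is applying the spectrahedral-inclusion theorem in exactly the needed generality. The coefficient space $\tcH$ may be infinite-dimensional, so I rely on Zalar's version, as Lemma~\ref{lem:bounded} already does. The output map must be well defined and ucp on the operator system even if the coefficients are linearly dependent. If the generators of $\oS_{L_1}$ are linearly dependent, I would check that the inclusion of spectrahedra forces the corresponding relations among the target coefficients, which is part of the cited theorem's conclusion. The rest of the argument is bookkeeping once this and Lemma~\ref{lem:pre-bounded-cone} are available.
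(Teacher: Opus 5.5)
\textbf{There is a genuine gap in item (2), in the inclusion $\wtC_{\vd,\penM}\subseteq\wtC_{\vd,L}$.} The rest is sound:
\begin{itemize}
\item item (1);
\item the inclusion $\wtC_{\vd,L}\subseteq\wtC_{\vd,\penM}$ via compression to the $\tcH$ summand;
\item both halves of item (3). For ``$\supseteq$'' you apply the inclusion theorem to the \emph{bounded} spectrahedron $\cD_{\penM_n}$, which is legitimate. The paper instead chains through $L$.
\end{itemize}
The inclusion $\wtC_{\vd,\penM}\subseteq\wtC_{\vd,L}$ is exactly the direction needed later in Lemma~\ref{l:bounded-enough}. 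Your principle would need a ucp map $\tau:\oS_L\to\oS_\penM$ with $\tau(I)=I$ and $\tau(\bA_j)=\bA_j\oplus 0$, obtained from $\cD_L\subseteq\cD_\penM$. That correspondence between spectrahedral inclusions and ucp maps requires an extra hypothesis on the smaller spectrahedron, such as boundedness. That is how the paper uses it in Lemma~\ref{lem:bounded}. Here $\cD_L$ is unbounded in exactly the situation this lemma is built for, and in that generality the inclusion does not force the relations you hoped for. For example, take $\vg=1$, $\tcH=\C^2$ and $\bA_1=\mathrm{diag}(1,2)$. Then $-I+\bA_1=\mathrm{diag}(0,1)$ is psd in $\oS_L$, but $\tau$ would send it to $\mathrm{diag}(0,1)\oplus(-I_2)$, which is not psd, so $\tau$ is not even positive. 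If $\bA_1=I_{\tcH}$, then $\tau$ is not even well defined.

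The missing idea is that, when $\cD_L$ is unbounded, you must allow an extra constant psd term, which the cone absorbs into its $r^*r$ summand. The paper does this as follows. Given $\pi\in\UCP(\oS, B(\cH))$, extend it to a ucp map on $\spann(\oS\cup\{I\oplus 0\})$. Set $\psi(a)=\pi(a\oplus 0)$ for $a\in\oS_L$; this map is completely positive but not unital. By Lemma~\ref{l:cp-to-ucp}, write $\psi=T^*\wtpi(\cdot)T$ with $\wtpi\in\UCP(\oS_L, B(\cH))$. Since $\pi(\penM)=\psi(L)+\pi(0\oplus I)$ and $\pi(0\oplus I)\succeq 0$,
\[
q^*\pi(\penM)q \ =\ (Tq)^*\wtpi(L)(Tq)+r^*r,\qquad r=\pi(0\oplus I)^{1/2}q\in\cA_\vd ,
\]
so $q^*\pi(\penM)q\in\wtC_{\vd,L}$. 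With this replacement for the faulty direction of item (2), the rest of your argument goes through as written.
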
 
		
		\begin{proof}
			Item~\ref{i:MntoM:i} is evident from the definitions.
			To prove item~\ref{i:MntoM:ii},   first let $q\in \cA_\vd$ and $\pi\in \UCP(\oS_L, B(\cH))$ be given. Define $\wtpi:\oS\to  B(\cH)$ by 
			\[
			\wtpi(I) \ =\ I, \qquad \wtpi(\bA_j\oplus 0) \ =\   \pi(\bA_j), \qquad  \wtpi(0\oplus S_j) \ =\  0.
			\]
			
			\smallskip
			
			We now show that $\wtpi$ is completely positive. An element $Y \in \oS \otimes M_{\ell}(\mathbb{C})$ has the form
			\[
			Y \ =\  I\otimes X_0 + \sum_{j=1}^\vg (\bA_j\oplus 0)\otimes X_j + \sum_{k=1}^\vg (0\oplus S_k)\otimes X_{\vg+k} 
			\]
			for a tuple $X=(X_0,X_1,\dots,X_{2\vg})$ of $\ell \times \ell$ matrices. If $Y \succeq 0$ in $\oS \otimes M_{\ell}(\C)$ then, by item~\ref{i:MntoM:i}, it follows that
			\[
			Z \ =\  I_{\tcH} \otimes X_0 + \sum_{j=1}^\vg \bA_j \otimes X_j \ \succeq\ 0
			\]
			in $\oS_L\otimes M_\ell(\C)$. Since $\pi$ is ucp, we obtain
			\[
			(\wtpi \otimes I_\ell)(Y) \ =\  (\pi \otimes I_\ell)(Z) \ \succeq\ 0.
			\]
			Thus $\wtpi \in \UCP(\oS, B(\cH))$, and
			\[
			q^* \wtpi(\penM_\ast) q \ =\  q^* \pi(L) q,
			\]
			where $\penM_\ast$ denotes either $\penM$ or $\penM_n$. Thus $\wtC_{\vd,L}\subseteq \wtC_{\vd,\penM_\ast}$ in either case. 
			
			\smallskip

			Next let $\pi\in \UCP(\oS, B(\cH))$ and $q\in \cA_\vd$ be given. Let
			$\widetilde{\oS}$ denote the finite-dimensional operator system spanned by $\oS\cup\{I\oplus 0\}.$ 
			Since $\pi$ is ucp on $\oS$ it extends to a ucp map, still denoted $\pi,$ on $\widetilde{\oS}$. Define $\psi: \oS_L\to  B(\cH)$ by 
			$\psi(I)=\pi(I\oplus 0)$ and $\psi(\bA_j) = \pi(\bA_j\oplus 0).$ Since $\pi$ is cp, $\psi$ is cp. Indeed, if $X=(X_0,X_1,\dots,X_\vg)$
			is a tuple of $\ell \times \ell$ matrices such that
			\[
			Z = I\otimes X_0 + \sum_{j=1}^\vg \bA_j \otimes X_j \ \succeq\  0,
			\]
			in $\oS_L\otimes M_\ell(\C),$ then
			\[
			Y \ =\   (I\oplus 0) \otimes X_0 + \sum_{j=1}^\vg (\bA_j\oplus 0)\otimes X_j \succeq 0,
			\]
			in $\oS\otimes M_\ell(\C),$ and thus, 
			\[
			(\psi\otimes I_\ell)(Z) =  (\pi\otimes I_\ell)(Y) \ \succeq\ 0.
			\]
			
			\smallskip
			
			By Lemma~\ref{l:cp-to-ucp}, there is a bounded operator
			$T:\cH \to \cH$ and a ucp map $\wtpi\in \UCP(\oS_L,  B(\cH))$   such that $\psi(X)=T^* \wtpi(X)T.$ 
			Since 
			\[
			\begin{split}
				q^* \pi(\Lambda)q  & \ =\  q^*\left ( \pi(I\oplus 0) -\sum \pi(\bA_j\oplus 0) x_j  \right ) q + q^* \pi(0\oplus I) q
				\\ & \ =\  (Tq)^* \wtpi(L) (Tq) + r^*r,
			\end{split}
			\]
			where $r=  (\pi(0\oplus I))^{\frac{1}{2}} q \in \cA_{\vd},$ it follows that $q^* \pi(\Lambda)q\in \wtC_{\vd,L}.$
			Thus $\wtC_{\vd,L}=\wtC_{\vd,\penM}$, as claimed.
			
			\smallskip
			
			Note at this point it has also been demonstrated that $\wtC_{\vd,\penM}\subseteq  \wtC_{\vd,\penM_n}$ for all $n\ge N.$
			
			\smallskip
			
			To complete the proof, let $p \in \cap_{n \geq N} \wtC_{\vd, \penM_{n}}$ be given.  For each $n$ 
			there exists $r_n,q_n\in \cA_\vd$ and $\pi_n\in \UCP(\oS, B(\cH))$ such that  $p = r_n^* r_n + q_{n}^{*} {\pi}_{n} (\penM_{n}) q_{n}.$
			Trivially the sequence $(r_n^* r_n + q_{n}^{*} {\pi}_{n} (\penM_{n}) q_{n})_{n}$ converges to $p$ in the product ultraweak topology.  The sequence $(\penM_n)$ converges coefficient-wise
			in norm to $\penM.$ Hence, by Lemma~\ref{lem:pre-bounded-cone}, there exists $r,q\in \cA_\vd$ and $\pi\in \UCP(\oS, B(\cH))$ such that
			\[
			p = r^*r +q^* \pi(\penM) q \in \wtC_{\vd,\penM}.
			\]
			From \ref{i:MntoM:i} it follows that $p\in \wtC_{\vd,L}$ and the proof of item~\ref{i:MntoM:iii} and the lemma is complete.
		\end{proof} 
		
		\begin{lemma}
			\label{l:bounded-enough}
			If Proposition \ref{prop:SOS} holds in the case where $\cD_L$ is bounded, then it holds in general.
		\end{lemma}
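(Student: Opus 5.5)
Suppose $p\in\cA_{2\vd+1}$ is self-adjoint and $p\notin\wtC_{\vd,L}$. We use the pencils $\penM_n$ introduced above. By Lemma~\ref{lem:MntoM}\ref{i:MntoM:ii} we have $p\notin\wtC_{\vd,\penM}$. By Lemma~\ref{lem:MntoM}\ref{i:MntoM:iii} there is then some $n\ge N$ with $p\notin\wtC_{\vd,\penM_n}$.

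Next we check that $\cD_{\penM_n}$ is bounded. Since
\[
\penM_n(X)\ =\ L(X)\oplus\Big(I+\tfrac1n\sum_j S_j\otimes X_j\Big),
\]
membership $X\in\cD_{\penM_n}$ forces the block $I+\frac1n\sum_j S_j\otimes X_j$ to be psd. This block is a direct sum over $j$ of the $2\times 2$ block matrices
\[
\begin{bmatrix} I & \tfrac1n X_j\\ \tfrac1n X_j & I\end{bmatrix}.
\]
Each of these is psd iff $\|X_j\|\le n$. Hence $\cD_{\penM_n}$ is bounded, with bound $n$.

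Now apply the bounded case of Proposition~\ref{prop:SOS}, which we are assuming, to the monic pencil $\penM_n$ (coefficients in $B(\tcH\oplus\C^{2\vg})$). This gives a finite-dimensional $\cE_n$, a self-adjoint tuple $X$ with $\penM_n(X)\succeq0$, and a vector $\gamma_n$ with $\la p(X)\gamma_n,\gamma_n\ra<0$.

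Finally, $\penM_n(X)\succeq0$ contains $L(X)\succeq0$ as a direct summand. So $X\in\cD_L$, which is the conclusion of Proposition~\ref{prop:SOS} for $L$.

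Two points need care, and the main subtlety is the first. The bounded case was set up under the scaling normalization \eqref{eq:scaleA}. We must make sure it applies to $\penM_n$ with the same $A$, and that $\wtC_{\vd,\penM_n}$ is the cone defined with $\UCP(\oS_{\penM_n},B(\cH))$. The first holds since $\penM_n(A)\succeq\frac12$ for $n\ge N$. The second is the identification noted just before Lemma~\ref{lem:MntoM}: ucp maps on $\oS_{\penM_n}$ extend to $\oS$ by Arveson, so using $\oS$ or $\oS_{\penM_n}$ gives the same cone.

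The second point is that Lemma~\ref{lem:MntoM}\ref{i:MntoM:iii} really supplies a single $n$. This is immediate from the contrapositive: if $p\in\wtC_{\vd,\penM_n}$ for every $n\ge N$, then $p\in\wtC_{\vd,\penM}=\wtC_{\vd,L}$, contradicting the hypothesis.
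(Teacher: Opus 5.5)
Your proof is correct and follows essentially the same route as the paper: use Lemma~\ref{lem:MntoM} to find an $n$ with $p\notin\wtC_{\vd,\penM_n}$, apply the bounded case to $\penM_n$, and conclude $L(X)\succeq 0$ from $\penM_n(X)\succeq 0$. Your $2\times 2$ block computation showing that $\cD_{\penM_n}$ is bounded by $n$ fills in a step the paper only asserts.
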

		
		\begin{proof}
			Suppose $\cD_{L}$ is unbounded. Since $p \notin \wtC_{\vd,L},$ by Lemma~\ref{lem:MntoM},   there exists a natural number $n_{0}$ such that $ p \notin \wtC_{\vd,\penM_{n_{0}}}.$ Since $\cD_{\penM_{n_{0}}}$ is bounded, by assumption, Proposition \ref{prop:SOS} holds for $\penM_{n_{0}}.$ Thus there exists a $\vg$-tuple of self-adjoint operators $X = (X_{1}, \dots, X_{\vg})$ on some finite-dimensional Hilbert space $\cE_{n}$ and a vector $\gamma_{n} \in \cH \otimes \cE_{n}$ such that  $\penM_{n_{0}}(X)\succeq0,$ but
			\[ 
			\la p(X) \gamma_{n} , \gamma_{n} \ra_{\cH \otimes \cE_{n}} \ <\  0.
			\]
			By the definition of $\penM_{n_{0}}$ and Lemma~\ref{lem:MntoM} item~\ref{i:MntoM:i},   $\cD_{\penM_{n_{0}}} \subset \cD_{\penM}=\cD_L.$ Thus, $L(X) \succeq 0$
			and the proof is complete. 
		\end{proof}
		
		\subsection{Proof of Proposition \ref{prop:SOS}}

		In this subsection,
		we complete a proof of Proposition \ref{prop:SOS}. 
		
		\smallskip
		
		We begin with the separation argument (that does not require $\cD_L$ to be bounded).
		
		\begin{prop}\label{prop:sep}
			Let $p \in \cA_{2\vd+1}$ be such that $p \notin \wtC_{\vd}$. Then there exists a continuous (with respect to the product ultraweak topology) linear functional 
			$\varphi : \cA_{2\vd+2} \to \mathbb{C}$  such that
			\[
			\real(\varphi(p)) \ <\  0,
			\qquad
			\varphi(q) \ \ge\  0 \quad \text{for all } q \in \wtC_{\vd+1,\vd}.
			\]
		\end{prop}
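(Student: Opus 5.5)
We apply Hahn--Banach separation in $\cA_{2\vd+2}$ with the product ultraweak topology, which is a locally convex Hausdorff topology. By Proposition~\ref{prop:closedcone}, the cone $\wtC_{\vd+1,\vd}$ is a closed convex cone there. It is closed under addition by the earlier proposition, and under multiplication by $t\ge0$ since $t(r^*r+q^*\pi(L)q)=(\sqrt t r)^*(\sqrt t r)+(\sqrt t q)^*\pi(L)(\sqrt t q)$.

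The first step is to check that $p\notin\wtC_{\vd+1,\vd}$. If $p$ were in it, Lemma~\ref{lem:reduction} would give $p\in\wtC_\vd$, since $p\in\cA_{2\vd+1}$, contradicting the hypothesis.

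The second step is to separate the point $p$ from the closed convex set $\wtC_{\vd+1,\vd}$. We regard $\cA_{2\vd+2}$ as a real locally convex space. The geometric Hahn--Banach theorem gives a continuous real-linear functional $\ell$ and a constant $\alpha\in\R$ with $\ell(p)<\alpha\le \ell(c)$ for all $c$ in the cone. Since $0$ lies in the cone, $\alpha\le 0$. Since the cone is invariant under positive scaling, $\ell(c)\ge0$ on the cone; otherwise scaling some $c$ with $\ell(c)<0$ would make $\ell(c)$ arbitrarily negative. Hence $\ell(p)<0\le \ell|_{\text{cone}}$.

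The third step is to complexify. We set $\varphi(f)=\ell(f)-i\,\ell(if)$. This is complex-linear and continuous, and $\real\varphi=\ell$. Continuous functionals for the weak-$*$ topology are exactly those given by the predual $\cT(\cH)^{\la x\ra_{2\vd+2}}$, so $\varphi$ has the trace form used later in Section~\ref{sec:GNS}.

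The fourth step, which is the main subtlety, is to upgrade $\real\varphi\ge0$ on the cone to $\varphi\ge0$, that is, to show $\varphi$ is real on the cone. Every element $c$ of the cone is self-adjoint, $c=c^*$. We replace $\varphi$ by its hermitian part $\varphi_h(f)=\tfrac12\big(\varphi(f)+\overline{\varphi(f^*)}\big)$. This is complex-linear because $f\mapsto f^*$ is conjugate-linear, and it is still ultraweakly continuous because adjoints are ultraweakly continuous on trace-class pairings. For self-adjoint $f$ we get $\varphi_h(f)=\real\varphi(f)$. Therefore $\varphi_h(c)=\ell(c)\ge0$ for every $c$ in the cone.

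On $p$ itself, $\real\varphi_h(p)=\tfrac12\big(\ell(p)+\real\varphi(p^*)\big)$. The hypothesis does not assume $p$ is self-adjoint. If it is, as in Proposition~\ref{prop:SOS}, this equals $\ell(p)<0$. In general we use $\ell(p^*)$ as well: if $p^*$ is also not in the cone, then symmetrizing $\ell\mapsto \tfrac12(\ell+\ell\circ{}^*)$ keeps the strict inequality whenever $\ell(p^*)\le\ell(p)$ or similar holds. The simplest course is to note that the statement only requires $\real\varphi(p)<0$ with $\varphi\ge 0$ on the cone. So we take $\varphi$ as constructed in the third step, and the only thing left to verify is that $\varphi(c)$ is real. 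For that we use $\varphi_h$ whenever $p=p^*$, which is the case used in Proposition~\ref{prop:SOS}.
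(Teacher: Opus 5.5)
For self-adjoint $p$ your argument is correct and is the paper's proof. Lemma~\ref{lem:reduction} gives $p\notin\wtC_{\vd+1,\vd}$, Proposition~\ref{prop:closedcone} gives closedness, and Hahn--Banach separates. Then $0\in\wtC_{\vd+1,\vd}$ together with scaling gives $\real\varphi\ge 0$ on the cone and $\real\varphi(p)<0$. Your passage to the hermitian part $\varphi_h(f)=\tfrac12\bigl(\varphi(f)+\overline{\varphi(f^*)}\bigr)$ is exactly what the paper's one-line justification leaves implicit. That line reads ``since $\wtC_{\vd+1,\vd}$ is a cone of self-adjoint elements, it follows that $\real\varphi(p)<0\le\varphi(q)$''. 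The step is genuinely needed, because Hahn--Banach controls only $\real\varphi$ on the cone, not whether $\varphi$ is real there.

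Your last paragraph, however, does not handle non-self-adjoint $p$. The remark about symmetrizing when ``$\ell(p^*)\le\ell(p)$ or similar holds'' is not an argument. Moreover, the $\varphi$ of your third step is in general not real on the cone, so there is nothing to ``verify'' without changing $\varphi$, and changing it to $\varphi_h$ alters $\real\varphi(p)$ unless $p=p^*$.

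No argument can close this, because the statement is false without $p=p^*$. Any linear $\varphi$ with $\varphi\ge 0$ on $\wtC_{\vd+1,\vd}$ is real on $x_1$. Indeed, $x_1=\tfrac14\bigl[(1+x_1)^*(1+x_1)-(1-x_1)^*(1-x_1)\bigr]$ is a difference of elements of $\Sigma_1\subseteq\wtC_{\vd+1,\vd}$. Also $\varphi(1)\ge 0$. Now take $p=1+i\,x_1\in\cA_1\subseteq\cA_{2\vd+1}$, with coefficients $I_\cH$ and $iI_\cH$. Then $p\ne p^*$, so $p\notin\wtC_\vd$, since every element of $\wtC_\vd$ is self-adjoint. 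Yet $\real\varphi(p)=\varphi(1)\ge 0$ for every admissible $\varphi$.

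More generally, every self-adjoint element of $\cA_{2\vd+2}$ is $V_{\vd+1}^*GV_{\vd+1}$ with $G=G^*$, hence a difference of two elements of $\Sigma_{\vd+1}$. So admissible $\varphi$ are automatically hermitian, and $\real\varphi(p)=\varphi\bigl(\tfrac12(p+p^*)\bigr)$. The correct hypothesis for general $p$ is therefore $\tfrac12(p+p^*)\notin\wtC_\vd$.

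You should state the proposition with the hypothesis $p=p^*$ and prove it with $\varphi_h$, rather than suggesting the general case might be rescued. That is the only case used, in Proposition~\ref{prop:SOS}, and it is also the case the paper's own final step tacitly requires.
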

		
		\begin{proof}
			By Lemma~\ref{lem:reduction}, $p \notin \wtC_{\vd+1,\vd}$. The space $\cA_{2\vd+2}$ is locally convex, and the cone $\wtC_{\vd+1,\vd}$ is closed in the product ultraweak topology (Proposition~\ref{prop:closedcone}). Hence, by the Hahn--Banach separation theorem (see \cite[Corollary 3.3.9]{D25}), there exist a continuous linear functional $\varphi$ and real numbers $\gamma_1 < \gamma_2$ such that
			\[
			\real(\varphi(p)) \ <\  \gamma_1 \ <\  \gamma_2 \ <\  \real(\varphi(q))
			\quad \text{for all } q \in \wtC_{\vd+1,\vd}.
			\]
			Since $\wtC_{\vd+1,\vd}$ is a cone of self-adjoint elements, it follows that
			\[
			\real(\varphi(p)) \ <\ 0 \ \le\  \varphi(q)
			\quad \text{for all } q \in \wtC_{\vd+1,\vd}. \qedhere
			\] 
		\end{proof}
		
		We are now ready to prove the Proposition \ref{prop:SOS}.
		
		\begin{proof}[Proof of Proposition \ref{prop:SOS}] 
			
			Taking advantage of Lemma~\ref{l:bounded-enough}, it suffices to prove
			the proposition under the additional hypothesis that $\cD_L$ is bounded.
			We divide the proof into three steps.
			
			\smallskip
			
			\noindent Step 1: Separation. 
			By Proposition~\ref{prop:sep}, there exists a product ultraweakly continuous linear functional $\varphi : \cA_{2\vd+2} \to \mathbb{C}$ such that
			\[
			\real(\varphi(p)) \ <\  0,
			\qquad
			\varphi(q) \ \ge\  0 \quad \text{for all } q \in \wtC_{\vd+1,\vd}.
			\]
			
			\smallskip
			
			\noindent Step 2: GNS construction.
			By Theorem \ref{thm:GNS} {(it is here where boundedness of $\cD_L$ is used)},
			there exist a separable Hilbert space $\cE$, a bounded self-adjoint tuple $Y = (Y_1,\dots,Y_{\vg})$ on $\cE$, and a vector $\gamma \in \cH \otimes \cE$ such that
			\[
			L(Y) \ \succeq\  0,
			\quad \text{and} \quad
			\varphi(p) \ =\  \la p(Y)\gamma,\gamma\ra \quad \text {for all $p \in \cA_{2\vd +1}$}.
			\]
			
			\smallskip
			
			\noindent
			Step 3: Finite-dimensional compression.
			Write $\gamma = \sum_{k=1}^\infty h_k \otimes f_k$, where $(f_k)$ is an orthonormal basis for $\cE$, and define
			\[
			\gamma_n \ :=\  \sum_{k=1}^n h_k \otimes f_k.
			\]
			Since $\real(\varphi(p)) < 0$, there exists $n$ such that
			\[
			\la p(Y)\gamma_n,\gamma_n\ra < 0.
			\]
			Let $\cE_n := \mathrm{span}\{f_1,\dots,f_n\}$ and let $P_{\cE_n}$ denote the orthogonal projection onto $\cE_n$. Define
			\[
			X_j \ :=\  P_{\cE_n} Y_j|_{\cE_n}, \quad j=1,\dots,\vg.
			\]
			Then
			\[
			L(X) \ =\  (I_{\cH} \otimes P_{\cE_n})\, L(Y)\,|_{\cH \otimes \cE_n} \ \succeq\  0,
			\]
			while
			\[
			\la p(X)\gamma_n,\gamma_n\ra
			\ =\  \la p(Y)\gamma_n,\gamma_n\ra \ <\  0. \qedhere
			\] 
		\end{proof}
		
		As a consequence of Theorem~\ref{t:main1}, we obtain a closedness property of the cone. {We note for clarity that the corollary below does not require \(\cD_L\) to be bounded.}

		\begin{corollary}
			\label{cor:WOT closed}
			The convex cone $\wtC_{\vd,L}$ is closed in the product WOT.
		\end{corollary}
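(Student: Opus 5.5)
The plan is to use Theorem~\ref{t:main1} to identify $\wtC_{\vd,L}$ with the set of $p\in\cA_{2\vd+1}$ that are positive on $\cD_L$. Then show that this positivity set is closed in the product WOT, because positivity is tested by WOT-continuous functionals of the coefficients.

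First, by Theorem~\ref{t:main1}, $\wtC_{\vd,L}$ equals the set of $p\in\cA_{2\vd+1}$ with $p(X)\succeq 0$ for all $n$ and all self-adjoint $X\in M_n(\C)^{\vg}$ with $L(X)\succeq0$. This uses only implication (i)$\Rightarrow$(ii) together with the easy converse, which follows from $\pi(L)(X)=(\pi\otimes\mathrm{id})(L(X))\succeq0$. Neither direction needs $\cD_L$ to be bounded, since Lemma~\ref{l:bounded-enough} already removed that hypothesis in the proof of the theorem.

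Next I would describe positivity through functionals. A polynomial $p\in\cA_{2\vd+1}$ is positive on $\cD_L$ exactly when, for every $X\in\cD_L$ of size $n$ and every vector $\gamma\in\cH\otimes\C^n$,
\[
\la p(X)\gamma,\gamma\ra\ =\ \sum_{w\in\la x\ra_{2\vd+1}}\la (P_w\otimes X^w)\gamma,\gamma\ra\ \ge\ 0 .
\]
Write $\gamma=\sum_{k=1}^n h_k\otimes e_k$. Then each term equals $\sum_{k,l}(X^w)_{lk}\la P_w h_k,h_l\ra$, which is a finite linear combination of the WOT-continuous functionals $P\mapsto\la Ph_k,h_l\ra$. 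So $p\mapsto\la p(X)\gamma,\gamma\ra$ is continuous in the product WOT on $\cA_{\ad}$.

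The positivity set is therefore an intersection of the sets $\{p:\real\la p(X)\gamma,\gamma\ra\ge0,\ \operatorname{Im}\la p(X)\gamma,\gamma\ra=0\}$, each closed in the product WOT. The degree constraint is also closed: $\cA_{2\vd+1}$ is cut out inside $\cA_{\ad}$ by requiring the coefficients of words of length greater than $2\vd+1$ to vanish, and each such condition is WOT closed. Intersecting these closed sets shows that $\wtC_{\vd,L}$ is closed in the product WOT.

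I expect no real obstacle. The only care needed is to note that $p(X)$ involves finitely many coefficients paired against finite-rank data, so finite-dimensional evaluations suffice and no unbounded or infinite-dimensional $X$ is required.
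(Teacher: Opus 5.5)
Your proposal is correct and follows essentially the paper's route. Both arguments use Theorem~\ref{t:main1} to identify $\wtC_{\vd,L}$ with the polynomials in $\cA_{2\vd+1}$ that are positive on $\cD_L$, and then note that this positivity survives product-WOT limits because $p\mapsto p(X)$ is WOT-continuous for each fixed finite-dimensional $X\in\cD_L$. The paper phrases this step with nets and the fact that WOT limits of psd operators are psd, which is the same as your intersection of the closed sets $\{p:\la p(X)\gamma,\gamma\ra\ge 0\}$.
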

		
		\begin{proof}
			Let $(p_\alpha)$ be a net in $\cA_{2\vd+1}$ that converges to $p$ in the product WOT. For each $X \in \cD_L$, the operators $p_\alpha(X)$ are positive semidefinite and converge in WOT to $p(X)$. Since WOT convergence preserves positivity, $p(X) \succeq 0$. The result now follows from Theorem~\ref{t:main1}.
		\end{proof}

		\section{The finite-dimensional setting}\label{sec:fd}
		This section presents the sums of squares representations obtained from  Theorem~\ref{t:main1} in the cases
		that either one (or both) of $\cH$ and $\tcH$ are finite dimensional. See Theorem~\ref{c:HKM12}.
		When both are finite dimensional the main result of \cite{HKM12} is recovered;
		and when $\cH$ is finite dimensional and $\tcH,$ the space that the coefficients
		of $L$ act on, is finite dimensional,  \cite[Theorem 1.5]{Zal17} is obtained.  
		Here we add the bound $\nu^2 N(d)$ implicit there.
		
		\smallskip
		
		Theorem~\ref{c:HKM12}  uses the following conventions. 
		In the case that  $\tcH$ is finite dimensional  we let   $L=\oplus_{k=1}^K L_k$ 
		denote a direct sum decomposition of $L.$  It is not assumed that $L$ so written
		is fully reduced and thus $K$ can always be taken to be $1.$  The corresponding Hilbert space decomposition 
		is written as $\tcH =\oplus_{k=1}^K \tcH_k$ and the dimensions of the $\tcH_k$
		are denoted by $\mu_k.$ Hence $\mu=\sum_{k=1}^K \mu_k$  is the dimension of $\tcH.$

		\begin{thm}%
			\label{c:HKM12}
			Let $\cF$ denote a separable Hilbert space. In the case $\cF$ is finite dimensional, let $\nu=\dim \cF.$
			Suppose  %
			$p\in   B({\cF}) \otimes \C\la x \ra_{2\vd+1}$  and $p(X) \succeq 0$ for every $X\in \cD_L.$ 
			
			\begin{enumerate}[\rm (i)]\itemsep=5pt
				\item \label{i:cHKM:i}
				If $\cF$ is finite dimensional and $\tcH$ is infinite dimensional, then there is a Hilbert space $\cE$ 
				of dimension at most $\nu^3  N(\vd),$  a ucp map $\pi:\oS_L \to  B(\cE),$ 
				and polynomials $r, q \in  B(\cF,\cE)\otimes \C\la x \ra_{\vd}$ 
				such that $p = r^* r + q^* \pi(L) q.$

				\item \label{i:cHKM:iii}
				If both $\cF$ and $\tcH$ are finite dimensional,  
				then there exist Hilbert spaces $\cE_k$ of dimension at most $\nu \mu_k N(\vd)$
				and   $r_{k,j}, q_{k,j} \in  B(\cF,\cE_k\otimes \tcH_k)\otimes \C\la x \ra_{\vd}$ for $1\le k \le K$ and $1\le j \le \nu \mu_k N(\vd)$ 
				such that
				\[
				p =\sum_{k=1}^K \sum_{j=1}^{N_k}  r_{k,j}^* r_{k,j} + \sum_{k=1}^K\sum_{j=1}^{N_k} q_{k,j}^* L_k  q_{k,j},
				\]
				where $N_k\le \nu \mu_k^2 N(\vd).$ 
				In particular, if each $L_k$ is scalar-valued, then there are at most  $\nu \mu  N(\vd)$ 
				many  polynomials  $r_{k,j}$ and at most $\nu \mu N(\vd)$ many  $q_{k,j}$ 
				and these polynomials 
				can be identified with elements of  $M_{1,\nu}(\C) \otimes \C\la x \ra_{\vd}.$

				\item  \label{i:cHKM:ii}
				If $\cF$ is infinite dimensional and $\tcH$ is finite dimensional, then, 
				for a Hilbert space $\cE$ such that $\cF = \cE\otimes \tcH,$ 
				there exist polynomials $r_k, q_k \in   B(\cF,\cE\otimes \tcH_k)\otimes \C\la x \ra_{\vd} $ 
				such that 
				\[
				p  = \sum r_k^* r_k +  \sum q_k^* (I_\cE \otimes L_k) q_k.
				\]
			\end{enumerate}
		\end{thm}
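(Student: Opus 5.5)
The plan is to embed the given data into the infinite-dimensional setting of Theorem~\ref{t:main1}, apply it there, and then compress back down. Fix a separable infinite-dimensional Hilbert space $\cH$ and an isometry $J:\cF\to\cH$; this is unitary when $\cF$ is itself infinite dimensional. Set $\hat p = J p J^* \in \cA_{2\vd+1}$. For every $X\in\cD_L$,
\[
\hat p(X) = (J\otimes I)\,p(X)\,(J\otimes I)^* \succeq 0 ,
\]
so Theorem~\ref{t:main1} gives $\hat p = r^*r + q^*\pi(L)q$ with $r,q\in\cA_\vd$ and $\pi\in\UCP(\oS_L,B(\cH))$. Compressing by $J$ returns
\[
p = (rJ)^*(rJ) + (qJ)^*\pi(L)(qJ),
\]
where $rJ,\, qJ \in B(\cF,\cH)\otimes\C\la x\ra_\vd$. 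All three items then come from shrinking the target space $\cH$, or restructuring $\pi$, in this representation.

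For item (i) the key observation is that only the ranges of the coefficients matter. Let $\cE_0$ be the span of the ranges of the coefficients $Q_w$ of $qJ$. It has dimension at most $\nu N(\vd)$. Let $P$ be the projection onto $\cE_0$; then $q^*\pi(L)q = q^*P\pi(L)Pq$, and $P\pi(\cdot)P$ is ucp into $B(\cE_0)$. Do the same for $r$, replacing $rJ$ by its polar part with range of dimension at most $\nu N(\vd)$. Then take $\cE$ to be a common space containing both pieces, using the direct-sum trick from the proof that $\wtC_\vd$ is closed under addition (enlarging $\pi$ by a state on the extra summand). This yields a dimension bound of order $\nu N(\vd)$, comfortably within the stated $\nu^3 N(\vd)$. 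The slack in the stated bound presumably accounts for a cruder bookkeeping that I will not try to reproduce; I will only verify that the bound I obtain is at most $\nu^3N(\vd)$.

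For items (ii) and (iii), where $\tcH$ is finite dimensional, I would use Theorem~\ref{thm:Arv-Stine}. The algebra $C^*(\oS_L)\subseteq\oplus_k B(\tcH_k)$ is finite dimensional, so any $*$-representation is, up to unitary equivalence, a direct sum of multiples of its irreducible representations. The irreducible representations appearing in $L$ are subrepresentations of the identity representations on the $\tcH_k$. Hence
\[
\pi(a) = V^*\Bigl(\bigoplus_k I_{\cE_k}\otimes a_k\Bigr)V ,
\]
where $a_k$ is the compression of $a$ to $\tcH_k$, and
\[
q^*\pi(L)q = \sum_k q_k^*(I_{\cE_k}\otimes L_k)q_k \quad\text{with } q_k = V_k q .
\]
The subtle point here is that the Stinespring representation factors through $C^*(\oS_L)$, whose irreducibles are the summands of the identity representation only if $L$ is decomposed far enough. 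To handle this, I would either allow $K$ to refine to the irreducible decomposition, or note that any representation of $C^*(\oS_L)$ is a compression of multiples of the identity representation on $\tcH$, which suffices after absorbing into $V$. For (ii), all multiplicity spaces are infinite-dimensional separable, so they can be identified with a single $\cE$ satisfying $\cE\otimes\tcH\cong\cF$, padding with zeros as needed.

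For (iii), the main obstacle is the counting. I would repeat the range argument: $q_k$ has coefficients in $B(\cF,\cE_k\otimes\tcH_k)$, and the joint range of its $N(\vd)$ coefficients has dimension at most $\nu N(\vd)$. Choosing $\cE_k$ minimal so that this range sits inside $\cE_k\otimes\tcH_k$ gives $\dim\cE_k\le\nu\mu_kN(\vd)$. Then split
\[
I_{\cE_k}\otimes L_k = \sum_j (e_j^*\otimes I)\, L_k\, (e_j\otimes I)
\]
along an orthonormal basis $(e_j)$ of $\cE_k$, which writes the $k$-th term as at most $\nu\mu_kN(\vd)$ terms $q_{k,j}^*L_kq_{k,j}$. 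Similarly, $r^*r$ splits into at most $\nu N(\vd)$ rank-one-row squares. I expect that matching the precise stated constants ($N_k\le\nu\mu_k^2N(\vd)$) will require tracking a Carathéodory-type reduction, and this is where I would spend the care.
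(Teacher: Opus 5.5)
Your argument is essentially the paper's. You embed $\cF$ isometrically into a separable infinite-dimensional $\cH$ (the paper takes $\cH=\ell^2\otimes\cF$), apply Theorem~\ref{t:main1} to $JpJ^*$, pull back by $J$ and compress to the coefficient ranges. When $\tcH$ is finite dimensional, you write $\pi(a)=V^*(I_\cE\otimes a)V$ and slice along orthonormal bases. The paper gets this Stinespring form by extending $\pi$ (Arveson) to all of $B(\tcH)=M_\mu(\C)$, whose unital representations are multiples of the identity. That is your second option, so refining the decomposition $L=\oplus_k L_k$ is never needed. In item (i) you are sharper than the paper. It compresses to $\cS(q)\otimes\cF$ with $\cS(q)\subseteq\ell^2$ of dimension at most $\nu^2N(\vd)$, which is where $\nu^3N(\vd)$ comes from. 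Compressing directly to the span of the ranges gives $\nu N(\vd)$.

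Two points need fixing. First, the direct-sum trick in (i) only gives $\dim\cE\le 2\nu N(\vd)$, and this exceeds $\nu^3N(\vd)$ when $\nu=1$. Instead, use that $r^*r$ is unchanged under $r\mapsto Wr$ for any isometry $W$ defined on the joint range $\cR$ of the coefficients of $rJ$. Pick $\cE\subseteq\cH$ containing $\cE_0$ with $\dim\cE=\max(\dim\cE_0,\dim\cR)\le\nu N(\vd)$, move $\cR$ isometrically into $\cE$, and compress $\pi$ to $\cE$. This is the paper's step of enlarging $\cS(q)$ and replacing $r$ by $Ur$.

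Second, no Carath\'eodory-type reduction is needed when both spaces are finite dimensional. Your own count $\dim\cE_k\le\nu\mu_kN(\vd)$ already gives $N_k\le\nu\mu_kN(\vd)\le\nu\mu_k^2N(\vd)$, which is exactly what the paper's proof yields. For scalar $L_k$ it gives the $\nu\mu N(\vd)$ count, and the at most $\nu N(\vd)$ terms from $r$ fit after padding with zeros.

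Minor points:
\begin{itemize}
\item In the statement, the both-finite case is item (ii) and the infinite-$\cF$ case is item (iii); you have them swapped.
\item In the infinite-$\cF$ case you still need to split $r$ as $r_k=(I_\cE\otimes P_k)Ur$, with $U:\cH\to\cE\otimes\tcH$ unitary and $P_k$ the projection onto $\tcH_k$.
\item The slicing identity should read $I_{\cE_k}\otimes L_k=\sum_j(e_j\otimes I)L_k(e_j^*\otimes I)$.
\end{itemize}
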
  
		
		\begin{proof}  
			To prove item~\ref{i:cHKM:i}, 
			let $\cH = \ell^2\otimes \cF,$  
			where $\ell^2=\ell^2(\N)$ is the usual space of $\ell^2$ sequences  $a=(a_m)_{m=0}^\infty.$ 
			Thus $\cH$ is separable and infinite dimensional.   Let $\iota: \cF\to \cH$ 
			denote the isometry $\iota f  = \zeta \otimes f$  for $f\in\cF,$  where $\zeta= (\zeta_{n})_{n \geq 0} \in \ell^2$ is the sequence with $\zeta_0=1$ and $\zeta_j=0$ for $j>0.$ 
			Let $\hat{p} = \iota  p \iota^*\in  B(\cH)\otimes \C\la x\ra_{2d+1} =\cA_{2\vd+1}.$   By construction, $\hat{p}(X)\succeq 0$
			for all $X\in \cD_L.$ Thus, by {Theorem~\ref{t:main1}}, there exists $\hat{r}, \, \hat{q}\in \cA_\vd$ 
			and $\pi\in \UCP(\oS_L, B(\cH))$ such that  
			\[ 
			\hat{p} \ =\  \hat{r}^* \hat{r} + \hat{q}^*  \pi(L) \hat{q}.
			\]
			It follows that,
			\begin{equation}
				\label{eq:HKM:2}
				p \ =\  r^* r + q^* \pi(L) q,
			\end{equation}
			where $r= \hat{r} \iota$ and $q= \hat{q}\iota.$  In particular, the coefficients of $r$ and $q$ map $\cF$ into $\cH.$
			The span of the ranges of the coefficients of $q$ has dimension  at most $\nu N(\vd).$  Since these ranges lie in $\ell^2\otimes \cF$
			and $\cF$ has dimension $\nu,$ 
			there is a subspace $\cS(q)$ of $\ell^2$ of dimension at most $\nu^2  N(\vd)$ such that the ranges of the coefficients of $q$
			lie in $\cS(q)\otimes \cF.$   Similarly, there is a subspace $\cS(r)$ of $\ell^2$ of dimension at most $\nu^2 N(\vd)$ 
			such that the ranges of the coefficients of $r$ lie in $\cS(r)\otimes \cF.$ By enlarging $\cS(q)$ and replacing
			$r$ with $Ur$ for an appropriate unitary as needed, we may (and do)
			assume $\cS(r)$ is $\cS(q).$  Let $\cE=\cS(q)\otimes \cF$ and let $V:\cE\to \ell^2\otimes \cF =\cH$ denote the inclusion.  Thus
			$V^* \pi(L) V: \oS_L\to  B(\cE)$ is ucp and 
			\[ 
			p \ = \ r^* r + q^* V^* \pi(L) V q, 
			\]
			where, without loss of generality, $r,q\in  B(\cF, \cE)\otimes \C\la x \ra_{\vd} .$  Finally the dimension of $\cE$ is at most
			$\dim \cF\, \dim S(q) \le \nu^3 N(\vd).$
			
			\smallskip
			
			Turning to  item~\ref{i:cHKM:iii}, since every unital $\ast$-representation 
			of $M_\mell(\C)$
			on Hilbert space is a multiple of the identity representation,  
			if $\pi \in \UCP(\oS_L, B(\cH)),$  then there is an auxiliary Hilbert  $\cE,$  an isometry $V:\cH\to \cE\otimes\C^\mell$ 
			and unital $\ast$-representation $\psi:M_\mell(\C)\to  B(\cE\otimes \C^\mell)$ such that $\psi(S) = I_\cE\otimes S$ and
			\begin{equation}
				\label{eq:HKM:1}
				\pi(S)  \ =\  V^* \psi(S) V \ =\  V^* (I_\cE \otimes S) V,
			\end{equation}
			for all $S\in \oS_L.$  Thus, since the proof of item~\ref{i:cHKM:i} was agnostic about
			whether $\tcH$ is finite or infinite dimensional, in this case still with $\cH=\ell^2\otimes \cF,$  equation~\eqref{eq:HKM:2} becomes,
			\begin{equation}
				\label{eq:HKM:2.5}
				p \ =\  r^* r + q^* (I_{\cE} \otimes L) q.
			\end{equation}
			
			Let $r_w$ and $q_w$ denote the coefficients of $r$ and $q$ respectively.  The range of each coefficient is a subspace of $\ell^2\otimes \cF$
			of dimension  at most $\nu$  and there are at most $N(\vd)$ of each. For $1\le k\le K$ choose an
			orthonormal  basis $\{e_{k,j} \mid 1\le k\le K, \, 1\le j\le \mu_k\}$ of $\tcH_k$  where, for fixed $k,$
			$\{e_{k,1},\dots, e_{k,\mu_k} \}$ is an orthonormal basis  
			of $\tcH_k.$ Let 
			\[
			\cS_k(s) = \spann \{ \gamma_{k,j} \in \ell^2 \mid \gamma \in \bigcup_w \range s_w, \,  (I\otimes P_k)  \gamma =\sum_{j=1}^{\mu_k} \gamma_{k,j} \otimes e_{k,j}\}  \subseteq \ell^2 
			\]
			for $s=r,q,$ where $P_k =\sum_{j=1}^{\mu_k} e_{k,j} e_{k,j}^*$ is the projection onto $\tcH_k.$
			In particular, the dimension of $\cS_k(s)$ is at most $\nu \mu_k N(\vd)$ and the range of each $s_w$
			lies in $\oplus_k (\cS_k(s) \otimes  \tcH_k).$  By enlarging $\cS_k(r)$  and 
			by 
			replacing $r$ with
			$Ur$ for an appropriate choice of unitary $U$  as needed,
			it may be (and is) assumed that $\cS_k(r)$ is  $\cS_k(q).$   Let $\cE_k=\cS_k(q).$
			
			\smallskip
			
			Let $\{u_{k,1},\dots,u_{k,\mu_k}\}$ denote an orthonormal basis for $\cE_k$  and set  $s_{k,j} = (u_{k,j}^* \otimes I_{\tcH_k}) s$,
			where $s$ is either $r$ or $q$, and $I_{\tcH_k}$ is the identity on $\tcH_k$.  With $I$ the identity of $\tcH,$ 
			\begin{equation}
				\label{eq:HKM:3}
				r^*  (I_\cE \otimes I_{\tcH})  r\ =\  r^*  \big ( \bigoplus_k (I_{\cE_k}\otimes I_{\tcH_k})\big )  r \ = \ 
				r^* \Big (\bigoplus_k  \big((\sum_{j=1}^{\mu_k} u_{k,j} u_{k,j}^*)\otimes I_{\tcH_k}\big)\Big ) r
				= \sum_{k=1}^K  \sum_{j=1}^{N_k}  r_{k,j}^*  r_{k,j},
			\end{equation}
			and similarly, 
			\begin{equation}
				\label{eq:HKM:4}
				q^* (I_\cE \otimes L) q \ = \   q^*  \big ( \bigoplus_k ( \sum u_{k,j} u_{k,j}^*) \otimes L \big ) q 
				=  \sum_{k=1}^K  \sum_{j=1}^{N_k} q_{k,j}^* L_k q_{k,j}.
			\end{equation}
			Combining equations~\eqref{eq:HKM:2.5}, \eqref{eq:HKM:3}  and \eqref{eq:HKM:4}
			completes the proof of item~\ref{i:cHKM:iii}. 
			
			\smallskip
			
			To prove item~\ref{i:cHKM:ii} modify the proof of item~\ref{i:cHKM:iii} as follows.
			Let $P_k$ denote the projection of $\tcH$ onto $\tcH_k,$  choose $r_k= (I_\cE \otimes P_k) r$
			and $q_k = (I_\cE \otimes P_k) q$ so that, for instance, $q=\oplus_k q_k,$ and substitute into 
			equation~\eqref{eq:HKM:2.5} using $P_k L P_k =L_k.$
		\end{proof} 
		
		\begin{remark}\rm
			\label{r:HKM12}
			In item~\ref{i:cHKM:iii}  let $\cE=\oplus \cE_k$. It is a simple matter to construct  
			$r_\ell \in  B(\cE\otimes \tcH)\otimes \C\la x\ra_{\vd}$    for $1\le \ell\le \nu$ such that
			$\sum_\ell r_\ell^* r_\ell = \sum_{k,j} r_{k,j}^* r_{k,j}.$ 
			
			Similarly in item~\ref{i:cHKM:ii} 
			there is an $r\in  B(\cF,\cE\otimes \tcH)\otimes \C\la x \ra_{\vd} $ such that
			$r^*r =\sum_k r_k^* r_k.$
		\end{remark}

		\section{Not necessarily monic pencils and affine linear change of variable}
		\label{sec:aff}
		The condition that $L$ is a monic linear pencil can be relaxed in several different ways. 
		Here we consider an affine change of variable  tailored for use in applying  Proposition~\ref{prop:SOS}
		in the proof of Theorem~\ref{t:posforpovmm} below,  a result used in the proof of  Theorem~\ref{t:main2}.
		
		\smallskip
		
		Let $\nmL$ be a given linear pencil (not necessarily monic) with self-adjoint coefficients in $ B(\tcH),$
		\[
		\nmL(x) \ =\  \bA_0 +\sum_{j=1}^\vg \bA_j x_j .
		\]
		Suppose $T$ is an invertible $\vg\times\vg$ real matrix and $b\in \mathbb{R}^\vg.$ The pair $(T,b)$ gives rise to the change of variables,
		\[
		x \ \mapsto\  y \ =\ Tx+b,
		\]
		where $y=(y_1,\dots,y_\vg)$ and $y_j = \sum_{k} T_{j,k} x_k + b_j.$ Let 
		\[
		\affL(x) \ =\  \nmL(Tx+b) \ =\ \Bigl(\bA_{0}+\sum_{i=1}^{\vg} b_{i}\bA_{i} \Bigr) + \sum_{j=1}^{\vg} \Bigl( \sum_{i=1}^{\vg} T_{i,j} \bA_{i} \Bigr) x_{j} .
		\]
		Thus $\affL$ is a linear pencil.  Note that if $\affL$ is monic, then the identity
		is in the vector space $\oS$ spanned by  $\{\bA_0,\bA_1,\dots,\bA_\vg\}$
		and $\oS =\oS_{\affL}$ is an operator system.
		
		\begin{proposition} 
			\label{prop:aff}
			Suppose $T$ is an invertible $\vg\times \vg$ matrix,  $b\in \R^\vg$ and  
			\[
			\affL(x) \ =\  \nmL(Tx+b) \ =\  I +\sum \affbA_j x_j
			\]
			is a monic linear pencil, $\cF$ is a separable Hilbert space
			and  $p\in  B(\cF)\otimes \cA_{2\vd+1}.$
			
			If both $\cF$ and $\tcH$ are  infinite dimensional
			and  $p(X)\succeq0$ for $X\in \cD_\nmL,$ then 
			there exists
			$r,q\in B(\cF)\otimes \cA_\vd$ and a ucp map $\pi:\oS_{\affL} \to B(\cF)$ such that
			\[
			p \ =\  r^*r + q^* \pi(\nmL)q.
			\]
			
			If either $\cF$ or $\tcH$ are finite dimensional and $p(X)\succeq0$ for $X\in \cD_\nmL,$ then 
			the conclusions of  Theorem~\ref{c:HKM12} hold for $p$ and $\nmL.$
		\end{proposition}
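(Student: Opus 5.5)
The plan is to reduce to the monic case by the change of variables. Write $\Theta(x)=Tx+b$ and let $\Theta^{-1}(y)=T^{-1}(y-b)$. For a polynomial $s$ define $s\circ\Theta$ by substituting $x_j\mapsto \sum_k T_{j,k}x_k+b_j$. This substitution is a unital $\ast$-algebra automorphism of $B(\cF)\otimes\C\la x\ra$, because $T$ and $b$ are real and so each $x_j$ goes to a self-adjoint affine polynomial. It preserves degree (its inverse is substitution by $\Theta^{-1}$), and so it maps $B(\cF)\otimes\C\la x\ra_{k}$ onto itself for every $k$. On evaluations at self-adjoint tuples it satisfies $(s\circ\Theta)(X)=s(TX+b)$, where $(TX+b)_j=\sum_k T_{j,k}X_k+b_jI$ is again a self-adjoint tuple. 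In particular $\affL=\nmL\circ\Theta$, and $X\in\cD_{\affL}$ if and only if $TX+b\in\cD_\nmL$. Since $T$ is invertible, $X\mapsto TX+b$ is a bijection of $g$-tuples of self-adjoint matrices of each size $n$, and hence a bijection $\cD_{\affL}\to\cD_\nmL$.

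Set $\hat p=p\circ\Theta\in B(\cF)\otimes\C\la x\ra_{2\vd+1}$. For $X\in\cD_{\affL}$ we have $\hat p(X)=p(TX+b)\succeq0$, so $\hat p$ is positive on $\cD_{\affL}$, and $\affL$ is monic. When $\cF$ and $\tcH$ are infinite dimensional, identify $\cF$ with $\cH$ by a unitary and apply Theorem~\ref{t:main1} to $\hat p$ and $\affL$. This gives $\hat r,\hat q$ of degree at most $\vd$ and a ucp map $\pi:\oS_{\affL}\to B(\cF)$ with
\[
\hat p \;=\; \hat r^*\hat r+\hat q^*\pi(\affL)\hat q .
\]
Now apply the inverse substitution $\Theta^{-1}$, which is a $\ast$-homomorphism. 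Put $r=\hat r\circ\Theta^{-1}$ and $q=\hat q\circ\Theta^{-1}$; both still have degree at most $\vd$. The first summand then becomes $r^*r$ and the left-hand side becomes $p$.

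The only point needing care is the pencil. The ucp map $\pi$ is defined on $\oS_{\affL}$, which is the span of $\{\bA_0,\dots,\bA_\vg\}$, and it is applied coefficientwise: $\pi(\affL)=\pi(I)+\sum_j\pi(\affbA_j)x_j$. This coefficientwise application commutes with the affine substitution, because the coefficients of $\affL$ are linear combinations of the $\bA_i$ with real scalars and $\pi$ is linear. Consequently
\[
\pi(\affL)\circ\Theta^{-1}=\pi(\nmL\circ\Theta\circ\Theta^{-1})=\pi(\nmL)=\pi(\bA_0)+\sum_j\pi(\bA_j)x_j,
\]
which is well-defined because each $\bA_i$ lies in $\oS_{\affL}$. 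The middle term therefore becomes $q^*\pi(\nmL)q$, which is the claimed identity.

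For the finite-dimensional cases the same transport applies. The conclusions of Theorem~\ref{c:HKM12} for $\hat p$ and $\affL$ are identities of the forms $r^*r+q^*\pi(\affL)q$, $\sum r_{k,j}^*r_{k,j}+\sum q_{k,j}^*\affL_k q_{k,j}$, and the $I_\cE\otimes\affL_k$ version. Composing with $\Theta^{-1}$ turns each into the corresponding identity for $p$ and $\nmL$. Here the blocks $\affL_k=\nmL_k\circ\Theta$ come from the decomposition $\nmL=\oplus\nmL_k$, the dimension counts are unchanged, and degrees are preserved.

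I expect no real obstacle. The step deserving the most attention is confirming that composition with $\Theta^{-1}$ respects involution and degree and commutes with the coefficientwise ucp map; all three follow from realness and invertibility of $T$ and linearity of $\pi$.
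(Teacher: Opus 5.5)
Your proposal is correct and follows the same route as the paper: set $\hat p(x)=p(Tx+b)$, apply Theorem~\ref{t:main1} (respectively Theorem~\ref{c:HKM12}) to $\hat p$ and the monic pencil $\affL$, and substitute $x\mapsto T^{-1}(x-b)$ to return to $p$ and $\nmL$. You also make explicit several points the paper leaves implicit: the substitution is a degree-preserving unital $*$-automorphism, $\oS_{\affL}$ contains $\bA_0,\dots,\bA_\vg$ so that $\pi(\nmL)$ is defined, and the coefficientwise application of $\pi$ (and any direct sum decomposition of $\nmL$) commutes with the substitution.
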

		
		\begin{proof}
			Let $\affp(x) = p(Tx+b).$  Thus $\affp \in \cA_{2\vd+1}$ and $\affp(X)\succeq0$
			for $X\in\cD_{\affL}.$   If both $\cF$ and $\tcH$ are infinite dimensional, then 
			by Theorem~\ref{t:main1},  there exists $\affr,\affq\in \cA_\vd$ and 
			a $\pi \in \UCP(\oS_{\affL}, B(\cH))$ such that 
			\[
			\affp \ =\  \affr^* \affr + \affq^* \pi(\affL) \affq.
			\]
			Setting $r=\affr(T^{-1}(x-b))$ and $q=\affq(T^{-1}(x-b))$ gives,
			\[
			p \ =\  r^* r + q^* \pi(L) q.
			\]
			
			If either $\cF$ or $\tcH$ is finite dimensional  then the same change of variable gives the desired conclusion
			after noting that the change of variable commutes with any choice of direct sum decomposition
			of $\nmL.$
		\end{proof}

		\subsection{A special linear pencil and an application of Proposition \ref{prop:aff}} 
		
		This subsection presents the application of Proposition~\ref{prop:aff} used in the proof of Theorem~\ref{t:posforpovmm}, which is subsequently employed in the proof of Theorem~\ref{t:main2}.
		
		\smallskip
		
		Let
		$\fqy=(\fqy_1,\dots,\fqy_{n-1})$ denote an $n-1$ tuple of freely non-commuting self-adjoint variables.
		Let \df{$\snmL[n]$} denote the  linear matrix polynomial (linear pencil) \index{$L[n]$}
		\begin{equation}\label{eq:L_ndef}
			\snmL [n](\fqy)
			\ =\  \begin{pmatrix}
				\fqy_1 & & & \\
				& \ddots & & \\
				& & \fqy_{n-1} & \\
				& & & 1 - \sum_{i=1}^{n-1} \fqy_{i}
			\end{pmatrix}.
		\end{equation}
		For a Hilbert space $\cE,$ we define \index{$\cD_n(\HE)$}
		\[
		\cD_{n}(\HE) \ =\ 
		\Bigl\{ E=(E_1,\dots,E_{n-1}):\ E_{i} \in  B(\cE)_{\rm sa}, \  E_{i} \succeq 0, \ 1 \succeq \sum_{i=1}^{n-1} E_i \Bigr\},
		\]
		where \df{$ B(\cE)_{\rm sa}$}  denotes the (bounded) self-adjoint operators on $\HE.$
		The \df{free spectrahedron} associated to the linear pencil $\snmL [n]$ is the sequence
		$\cD_n= (\cD_n(\C^\ell))_\ell.$ %
		Observe that if $E\in \cD_n(\HE),$ then, setting $E_n=I-\sum_{i=1}^{n-1} E_i,$  each $E_i$ is psd and
		\[
		I \ =\ \sum_{i=1}^n E_i
		\]
		so that $(E_1,\dots,E_n)$ corresponds to a  positive $\BO(\HE)$-valued  measure (\df{$povm$})
		on the set $\{1,2,\ldots,n\}.$
		
		\smallskip
		
		Fix  a positive integer $m$ and positive integers $n_1,\dots,n_m\ge 2.$ Let 
		\[
		y \ =\ (y_{1,1},\dots,y_{1,n_1-1}, y_{2,1},\dots, y_{m,n_m-1})
		\]denote freely non-commuting self-adjoint variables. 
		For notational convenience, let 
		\[
		\snmL_i=\snmL [n_i], \quad \snmL =\oplus_{i=1}^m  n_i \snmL_i.
		\]
		For a Hilbert space $\cE,$ we have 
		\begin{equation}
			\label{eq:DLE}
			\cD_{\snmL}(\cE)
			=
			\left\{
			(E_{i,j}) :
			\begin{array}{l}
				
				1\le i\le m,\quad  1\le j\le n_i-1, \\[4pt]
				E_{i,j}\in  B(\cE)_{\rm sa}, \quad  E_{i,j}\succeq 0, \quad  I \succeq \displaystyle\sum_{j=1}^{n_i-1} E_{i,j}
			\end{array}
			\right\}.
		\end{equation}
		The free spectrahedron associated to the linear pencil $\snmL$ takes the form $\cD_\snmL = (\cD_\snmL(\C^\ell))_\ell.$ 
		
		\smallskip 
		
		To see that there is an affine linear transformation that converts $\snmL$ to a monic linear pencil $\affL,$ 
		let $I_i$ denote the $(n_i-1)\times (n_i-1)$ identity matrix and  let $b_i$ denote the vector in $\mathbb{R}^{n_{i}-1}$ with entries $\frac{1}{n_i}$.  Let $T_{i} = I_{i},$ $T = \oplus_{i=1}^m I_i,$ and $b=\oplus_{i=1}^m b_i.$ Thus, with $y_{i,*} = (y_{i,1},\dots,y_{i,n_i-1}),$ \index{$y_{i,*}$}
		\[
		n_i \snmL_i(T_i y_{i,*} +b_i) \ =\   I_i  + 
		n_i \, \begin{pmatrix}
			y_{i,1} & & & \\
			& \ddots & & \\
			& & y_{i,n_i-1} & \\
			& & &  - \sum_{j=1}^{n_i-1} y_{i,j}
		\end{pmatrix},
		\]
		and  $\affL = \oplus_{i=1}^{m} n_{i} \snmL_{i}(T_{i} y_{i,*} + b_{i})$ is monic. 
		
		\smallskip
		
		The following result interprets Proposition~\ref{prop:aff}  concretely for the pencil $\snmL$ above
		taking $\vg=\sum_{i=1}^m  (n_i-1).$
		
		\begin{proposition}
			\label{prop:WS}
			Let $\cF$ denote a separable Hilbert space.
			If $p\in B(\cF)\otimes \C \la y\ra_{2\vd+1}$
			is  self-adjoint, then $p(E)\succeq 0$ for all $E\in \cD_\snmL $ if and only if the following hold:
			
			\begin{enumerate}[\rm (i)]\itemsep=5pt
				\item 
				\label{i:WS:i}
				If $\cF$ is infinite  dimensional, then there exist
				$f\in B(\cF)\otimes \C \la y\ra_{\vd} $
				and $f_i, f_{i,j}\in  B(\cF,\C)\otimes \C \la y\ra_\vd$ such that 
				\[
				p \ =\  f^*f + \sum_{i=1}^m \left[ \sum_{j=1}^{n_i-1} f_{i,j}^* y_{i,j}f_{i,j}
				+ f_i^*\Bigl(1-\sum_{j=1}^{n_i -1}y_{i,j}\Bigr)f_i \right].
				\]
				
				\item 
				\label{i:WS:ii}
				If $\cF=\C^\nu$ is finite-dimensional,   then there exist 
				\[
				g_{k,i,j},f_{i,k},f_{i,j,k}\in M_{1,\nu}(\C)\otimes \C \la y \ra_{\vd}
				\]
				such that
				\[
				p \ =\  \sum_{k=1}^\mu \sum_{i=1}^m \sum_{j=1}^{n_i}  g_{k,i,j}^* g_{k,i,j}
				+ \sum_{k=1}^\mu \sum_{i=1}^m
				\left[\sum_{j=1}^{n_i-1} f_{i,j,k}^* y_{i,j} f_{i,j,k}
				+ f_{i,k}^*\Bigl(1-\sum_{j=1}^{n_i-1} y_{i,j}\Bigr)f_{i,k}\right].
				\]
			\end{enumerate}
		\end{proposition}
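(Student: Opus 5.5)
The plan is to deduce Proposition~\ref{prop:WS} from Proposition~\ref{prop:aff} (and, for the finite-dimensional case, from Theorem~\ref{c:HKM12}) applied to the pencil $\snmL=\oplus_i n_i\snmL_i$ with the affine change of variables $y_{i,*}\mapsto y_{i,*}+b_i$ constructed above. The resulting $\affL$ is monic with coefficients in the finite-dimensional space $\tcH=\oplus_i\C^{n_i}$. By \eqref{eq:DLE}, $\cD_\snmL$ is exactly the set of tuples with $E_{i,j}\succeq0$ and $I\succeq\sum_jE_{i,j}$.

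\emph{Sufficiency.} This direction is immediate and I will dispatch it first. Evaluating any of the displayed certificates at $E\in\cD_\snmL$ gives a sum of terms of the form $f(E)^*(I\otimes E_{i,j})f(E)$ and $f(E)^*(I\otimes(I-\sum_jE_{i,j}))f(E)$, together with squares. Each such term is psd. (For operator coefficients $f_{i,j}\in B(\cF,\C)\otimes\C\la y\ra$, I would use $f^*(E)$ evaluated appropriately.)

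\emph{Necessity.} Since $\tcH$ is finite dimensional, I would use Theorem~\ref{c:HKM12} via Proposition~\ref{prop:aff}, decomposing $\snmL$ (hence $\affL$) into scalar summands: $\snmL=\oplus_i\oplus_{\text{copies}}\bigl(\oplus_{j=1}^{n_i-1}y_{i,j}\oplus(1-\sum_jy_{i,j})\bigr)$. Every summand $L_k$ is $1\times1$, so $\mu_k=1$. The change of variables commutes with the direct sum, as noted in the proof of Proposition~\ref{prop:aff}.
\begin{enumerate}[\rm (i)]
\item For $\cF$ infinite dimensional, item~\ref{i:cHKM:ii} of Theorem~\ref{c:HKM12} gives $p=\sum_kr_k^*r_k+\sum_kq_k^*(I_\cE\otimes L_k)q_k$, where $\cF=\cE\otimes\tcH$ and $L_k$ is a scalar pencil $y_{i,j}$ or $1-\sum_jy_{i,j}$. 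By Remark~\ref{r:HKM12}, $\sum_kr_k^*r_k=f^*f$ for a single $f\in B(\cF)\otimes\C\la y\ra_\vd$ (after identifying $\cE\otimes\tcH$ with $\cF$). The terms with the same scalar weight combine into $\tilde q^*(I_\cE\otimes\ell)\tilde q=\tilde q^*\ell\,\tilde q$, where $\ell$ is the scalar weight and $\tilde q$ takes values in $\cE$. The statement as written uses $f_{i,j}\in B(\cF,\C)\otimes\C\la y\ra_\vd$; I read this loosely, as an $\cE$-valued (hence, after a unitary $\cE\cong\cF$ or by collecting, suitably column-valued) polynomial. I would reconcile it by stacking columns: a weighted sum $\sum_sg_s^*\ell g_s$ equals $G^*(I\otimes\ell)G$. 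Which codomain is intended is the main bookkeeping point to fix.
\item For $\cF=\C^\nu$, item~\ref{i:cHKM:iii} of Theorem~\ref{c:HKM12} with each $L_k$ scalar yields at most $\nu\mu N(\vd)$ polynomials in $M_{1,\nu}(\C)\otimes\C\la y\ra_\vd$ of each type, with $\mu=\sum_in_i^2$. I then regroup them by which scalar summand $y_{i,j}$ or $1-\sum_jy_{i,j}$ they weight, and by the multiplicity index, producing the indices $k,i,j$ of the displayed formula. Finally I undo the change of variables, which preserves degree and turns $\affL$ back into $\snmL$, since the weights are just the diagonal entries of $\snmL$.
\end{enumerate}

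\emph{Main obstacle.} I expect the hard part to be the index bookkeeping in (ii), namely matching the count $\mu$ and the $n_i$ copies from the multiplicity $n_i\snmL_i$ to the sums as displayed. A secondary issue is checking that the degree bound $\vd$ survives the affine substitution; it does, because the substitution is affine.
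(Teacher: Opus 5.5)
Your proposal is correct and follows the paper's own brief proof. Both split $\snmL$ into its scalar diagonal summands and invoke Proposition~\ref{prop:aff} to pull the conclusions of Theorem~\ref{c:HKM12} back through the affine change of variables: item~\ref{i:cHKM:ii} for infinite-dimensional $\cF$, and item~\ref{i:cHKM:iii} for $\cF=\C^\nu$.

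Two small fixes:
\begin{itemize}
\item $n_i\snmL_i$ denotes the scalar multiple, not $n_i$ copies; the scalar factor is what makes $\affL$ monic. So $\dim\tcH=\sum_i n_i$ rather than $\sum_i n_i^2$, and the factors $n_i$ are simply absorbed into the $f$'s.
\item Your $\cE\cong\cF$ reading of the codomain in (i) is the right one. Taking $f_i,f_{i,j}\in B(\cF,\C)\otimes\C\la y\ra_\vd$ literally already fails for $p=I_\cF\,y_{1,1}$ with $\vd=0$, because the $y_{1,1}$-coefficient of the certificate would have rank at most two. This matches Theorem~\ref{t:posforpovmm}(1), where all the $f$'s are $B(\cH)$-valued.
\end{itemize}
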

		
		\begin{proof}
			The backward implication is immediate. 
			
			\smallskip
			
			To prove the forward implication, note that the pencil $\snmL$ has the direct sum decomposition
			$\bigoplus_{i=1}^m \Big((\bigoplus_{j=1}^{n_i-1} \snmL_{i,j}) \oplus \snmL_{i,n_i} \Big)$
			for the scalar pencils $\snmL_{i,j}(y) = n_i y_{i,j}$ for $1\le i< n_i$ and $\snmL_{i,n_i}(y) = 1- n_i \sum_{j=1}^{n_i} {y_{i,j}}.$ 
			By  Proposition~\ref{prop:aff}, the conclusions of Theorem~\ref{c:HKM12} hold
			giving the conclusion of item~\ref{i:WS:i} or  \ref{i:WS:ii}  depending
			on whether $\cF$ is  infinite or finite dimensional, respectively.
		\end{proof}

		\section{Positivstellensatz for the \texorpdfstring{$*$-}aalgebra \texorpdfstring{$\povm(n)$}{POVMn}} \label{sec:povm} 
		
		\subsection{The \texorpdfstring{$*$-}aalgebra povm} 
		Given a positive integer $n\ge 2,$ this subsection describes the construction of a $*$-algebra $\povm(n)$
		naturally associated to positive operator-valued measures on the set $\{1,2,\dots,n\}.$   
		
		\smallskip
		
		Let
		$y=(y_1,\dots,y_{n-1})$ denote an $n-1$ tuple of freely non-commuting self-adjoint variables and let $\C\la y\ra$ denote the resulting unital free $*$-algebra. 
		Let $L[n]$ denote the  linear matrix polynomial (linear pencil) defined in \eqref{eq:L_ndef}.
		
		\smallskip
		
		Let 
		\[
		\HG_n \ =\  \oplus_{\ell=1}^\infty \oplus_{E\in \cD_n(\C^\ell)} \C^{\ell}
		\]
		and define $\Psi_n:\C\la y\ra\to  B(\HG_n)$ by \index{$\Psi_n$} \index{$ B(\HG_n)$}
		\[
		\Psi_n(p) \ =\  \oplus_{E\in\cD_n} \, p(E).
		\]
		Note that \(\Psi_n\) is a \(*\)-homomorphism. Since \(\cD_n(\C^\ell)\) has nonempty interior for every \(\ell\), the map \(\Psi_n\) is faithful. 
		In particular, $\povm(n)$ can be viewed as a $*$-subalgebra of $ B(\HG_n).$ Consequently,
		\[
		\|p\|\ :=\ \|\Psi_n(p)\| \qquad (p\in \C\la y\ra)
		\]
		defines a norm on $\C\la y\ra$. Equipped with this norm, $\C\la y\ra$ becomes a pre-\(C^*\)-algebra, which we denote by \df{$\povm(n)$}.
		Its completion (a $C^{*}$-algebra) is denoted by 
		\df{$\POVMn$}. For a more detailed discussion, we refer the reader to \cite{Cim09, Oz13}.
		
		\smallskip
		
		Each $E\in \cD_n(\ell)$ induces a unital $*$-representation 
		\[
		\rep_E:\povm(n)\to M_\ell(\C), \qquad p \ \mapsto\ p(E)
		\]
		for $p\in \C \la y \ra.$ Since $\rep_E$ is a bounded $*$-homomorphism on the dense subalgebra $\C \la y \ra$ of $\POVMn,$ it extends to a $*$-representation of $\POVMn.$
		
		\begin{lemma} \label{l:repCpovm}
			If $\rep:\POVMn\to  B(\HE)$ is a $*$-representation of $\POVMn$ on a Hilbert space $\HE,$ then there is a tuple $Y\in \cD_n(\HE)$ such that $\rep=\rep_Y.$ 
		\end{lemma}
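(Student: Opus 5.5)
The natural choice is $Y_i := \rep(y_i)$ for $1\le i\le n-1$, regarding $y_i$ as an element of $\povm(n)\subset\POVMn$. The work then splits into three checks: that $Y$ is a tuple of bounded self-adjoint operators, that $Y\in\cD_n(\HE)$, and that $\rep=\rep_Y$.

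\emph{Self-adjointness and boundedness.} Each $y_i$ is self-adjoint in the $*$-algebra $\C\la y\ra$. A $*$-representation of a $C^*$-algebra is contractive, so each $Y_i$ is a bounded self-adjoint operator on $\HE$ with $\|Y_i\|\le\|y_i\|$.

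\emph{Positivity conditions.} We need $Y_i\succeq 0$ and $I-\sum_{i} Y_i\succeq0$. The plan is to show that $y_i$ and $1-\sum_i y_i$ are positive elements of the $C^*$-algebra $\POVMn$; since $*$-homomorphisms map positive elements to positive elements, the conditions then transfer to $\HE$. Positivity in $\POVMn$ follows from the faithful representation $\Psi_n$, which is isometric by the definition of the norm. Indeed, $\Psi_n(y_i)=\oplus_{E} E_i\succeq0$ and $\Psi_n(1-\sum_i y_i)=\oplus_E(I-\sum_i E_i)\succeq 0$, because every $E\in\cD_n(\C^\ell)$ satisfies exactly these inequalities. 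The extension of $\Psi_n$ to $\POVMn$ is an injective $*$-homomorphism into $B(\HG_n)$, hence an isometric $*$-isomorphism onto its range. Spectra are preserved under such a map, so a self-adjoint element of $\POVMn$ whose image is psd is positive. This gives $Y\in\cD_n(\HE)$.

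\emph{Identifying $\rep$ with $\rep_Y$.} We must check that $\rep_Y$ makes sense: $\rep_Y$ is defined first on $\C\la y\ra$ by $p\mapsto p(Y)$ and needs to extend to $\POVMn$. Once that is settled, $\rep$ and $\rep_Y$ are continuous $*$-homomorphisms that agree on the generators $y_i$, hence on the polynomials, hence by density on all of $\POVMn$. This agreement also settles the extension question, because $p(Y)=\rep(p)$ for every polynomial $p$. Consequently $\|p(Y)\|\le\|p\|$, so $\rep_Y$ is bounded on the dense subalgebra and extends, and in fact coincides with $\rep$.

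The only step requiring care is the positivity transfer, namely justifying that positivity under $\Psi_n$ implies positivity in the abstract $C^*$-algebra $\POVMn$. The faithfulness and isometry of $\Psi_n$ handle this. The rest of the argument is routine.
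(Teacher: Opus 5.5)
Your proposal is correct and follows the same route as the paper. You set $Y_j=\rep(y_j)$, observe that $\rep(p)=p(Y)$ on the dense subalgebra $\C\la y\ra$, and transfer positivity of $y_j$ and of $1-\sum_j y_j$ from $\Psi_n$ through $\POVMn$ to $\HE$; the paper phrases the second condition as $\sum_j y_j$ being psd and contractive.

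Your remarks on spectral permanence and on why $\rep_Y$ extends to $\POVMn$ for an arbitrary $\HE$ fill in details the paper leaves implicit. Like the paper, you tacitly assume that $\rep$ is unital when matching $\rep$ and $\rep_Y$ on constants.
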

		
		\begin{proof}
			The set $\povm(n)$ is dense in $\POVMn$ and is generated by the tuple $y.$ Thus, setting $Y_j=\rep(y_j),$ we have 
			\[
			\rep(p) \ =\ p(Y)
			\]
			for every $p \in \C\la y\ra.$ It remains to see that $Y \in \cD_{n}(\HE).$ 
			
			\smallskip
			
			Let $f_j(y)=y_j.$ Note that $\rep_E(f_j)$ is psd for each $E\in\cD_n.$ Thus so is $\Psi_{n}(f_j),$ which means $f_j$ is positive (since $\Psi_{n}$ is faithful)  
			as an element of $\POVMn.$ Thus  $\rep(f_j)=Y_j$ is psd. A similar argument applied to  $f(y)=\sum_{j=1}^{n-1} y_j$ shows $\Psi_n(f)$ is psd and contractive and therefore so is $\rep(f)=\sum Y_j.$ Hence $Y\in \cD_{n}(\HE).$
		\end{proof}

		\begin{lemma}
			\label{l:infvfin}
			If $\cE$ is a Hilbert space, and $Y=(Y_1,\dots,Y_{n-1})$ is a tuple of psd operators on $\cE$ that satisfies 
			the inequality,
			\[
			I \ \succeq\  \sum_{j=1}^{n-1} Y_j, 
			\]
			then, for each positive integer $d$ and unit vector $e\in\cE,$
			there exists an $\ell,$ an  $E\in\cD_n(\C^\ell)$ and a unit vector
			$\xi\in \C^\ell$  such that 
			\[
			\|p(Y)e\| \ =\ \|p(E)\xi\|
			\]
			for all $p\in \C \la y\ra_d.$ In particular, $\|p(Y)\|\le \|p\|$ for all $p\in \C \la y\ra.$
		\end{lemma}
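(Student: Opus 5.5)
The plan is a finite-dimensional compression argument of Krylov-space type. Fix $d$ and the unit vector $e$. Let
\[
\cK \ :=\ \spann\{\, Y^w e \ :\ w\in \la y\ra_{d}\,\}\ \subseteq\ \cE,
\]
which is finite-dimensional of dimension $\ell\le N(d)$ (number of words of length at most $d$ in $n-1$ letters). Let $P$ be the orthogonal projection onto $\cK$, and set $E_j := PY_jP|_{\cK}$. Compressions preserve positivity, so $E_j\succeq 0$. Likewise
\[
I_{\cK}-\sum_j E_j \ =\ P\Bigl(I-\sum_j Y_j\Bigr)P|_{\cK}\ \succeq\ 0 .
\]
Hence, after identifying $\cK\cong\C^\ell$, we have $E\in\cD_n(\C^\ell)$. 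Take $\xi=e\in\cK$.

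Next I would show $E^w e = Y^w e$ for $|w|\le d$, by induction on $|w|$. The empty word is trivial. If $w=y_j u$ with $|u|<d$, then by induction $E^u e=Y^u e$. Since $Y_jY^u e=Y^{w}e\in\cK$, we get
\[
E_jE^u e \ =\ PY_jY^u e \ =\ Y^we .
\]
It follows that $p(E)e=p(Y)e$ for every $p\in\C\la y\ra_d$, and in particular $\|p(Y)e\|=\|p(E)\xi\|$. One subtlety: we need $Y^u e\in\cK$ before applying $Y_j$, and this holds because $|u|\le d$. The only thing to check carefully is the ordering convention $Y^w=Y_{i_1}\cdots Y_{i_k}$, peeling off the leftmost letter as above. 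In a scalar polynomial setting the coefficients are scalars, so no tensoring issue arises.

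For the last claim, fix $p\in\C\la y\ra$ of degree at most $d$ and any unit $e$. Then
\[
\|p(Y)e\| \ =\ \|p(E)\xi\| \ \le\ \|p(E)\| \ \le\ \|\Psi_n(p)\| \ =\ \|p\|,
\]
since $p(E)$ is a direct summand of $\Psi_n(p)$. Taking the supremum over $e$ gives $\|p(Y)\|\le\|p\|$.

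The main (mild) obstacle is just the induction step verifying that compression commutes with the words up to length $d$. A secondary point is when $\cE$ is finite-dimensional or $\cK$ is small. In that case $\ell$ may be less than $N(d)$, which is harmless since $\cD_n(\C^\ell)$ is defined for every $\ell\ge1$ and $\ell\ge1$ because $e\in\cK$.
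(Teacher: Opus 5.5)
Your proposal is correct and follows essentially the same route as the paper. Both compress $Y$ to the finite-dimensional subspace $\{p(Y)e:\deg p\le d\}$, note that the compression lies in $\cD_n(\C^\ell)$, and verify $w(E)e=w(Y)e$ word by word; the paper inserts $VV^*$ between the factors while you induct on word length, which is the same observation. You also spell out the ``in particular'' bound via $p(E)$ being a direct summand of $\Psi_n(p)$, which the paper leaves implicit.
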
  
		
		\begin{proof}
			The set  $\cF=\{p(Y)e:\deg(p)\le d\}$
			is a finite-dimensional subspace of $\cE.$  Denote
			$\ell=\dim\cF$. Since $1(Y)e=e$, we have $e\in\cF$.
			
			\smallskip
			
			Let $V:\cF\hookrightarrow \cE$ denote the inclusion map and define
			\[
			E_j \ =\  V^*Y_jV, \qquad j=1,\dots,n-1,
			\]
			and write $E=(E_1,\dots,E_{n-1})$. Since $Y_j\succeq0$ and
			$\sum_{j=1}^{n-1}Y_j\preceq I$, the same inequalities hold after
			compression. Hence $E\in\cD_n(\C^\ell)$. Set $\xi=e$, which is a unit
			vector in $\cF\cong\C^\ell$.
			
			\smallskip
			
			We now show that $p(E)\xi=p(Y)e$ for every $p\in\C\langle y\rangle_d$.
			By linearity it suffices to consider a word $w=y_{i_1}\cdots y_{i_r}$
			with $r\le d,$ in which case,
			\[
			w(E)e \ =\ (V^*Y_{i_1}V)\cdots(V^*Y_{i_r}V)e.
			\]
			For each $k=0,\dots,r-1$, the vector
			\[
			Y_{i_{k+1}}\cdots Y_{i_r}e
			\]
			lies in $\cF$, since it is of the form $q(Y)e$ for a word $q$ of degree
			at most $d$. Hence $VV^*$ acts as the identity on these vectors, and
			therefore
			\[
			w(E)e
			\ =\ V^*Y_{i_1}(VV^*)Y_{i_2}\cdots(VV^*)Y_{i_r}e
			\ =\ V^*Y_{i_1}\cdots Y_{i_r}e.
			\]
			Since $w(Y)e\in\cF$, we also have
			\[
			V^*Y_{i_1}\cdots Y_{i_r}e \ =\ w(Y)e.
			\]
			Thus $w(E)e=w(Y)e$, and by linearity
			\[
			p(E)e \ =\  p(Y)e
			\]
			for all $p\in\C\langle y\rangle_d$. Consequently,
			\[
			\|p(Y)e\| \ = \|p(E)\xi\|,
			\]
			which proves the lemma.
		\end{proof}

		\subsection{Free products}
		Fix a positive integer \(m\) and integers $ n_1,\dots,n_m \ge 2.$ Let
		\[
		y \ =\  \{\, y_{i,j}:\quad  1\le i\le m,\; 1\le j\le n_i-1\}
		\]
		be freely noncommuting self-adjoint variables, and let
		\(\C\langle y\rangle\) denote the corresponding free $*$-algebra of
		polynomials in $y.$
		
		\smallskip
		
		Let $\underline{n} = (n_{1}, \dots,n_{m}).$ As in Section~\ref{sec:aff}, let $L_i=L[n_i]$ and $L=\oplus n_i L_i.$ Following the construction of $\povm(n),$ let 
		\[
		\HG \ =\ \oplus_{\ell=1}^\infty  \oplus_{E\in\cD_L(\C^\ell)} \C^\ell,
		\]
		define $\Psi:\C \la y \ra \to  B(\HG)$ by \index{$\Psi$} \index{$ B(\HG)$}
		\begin{equation} \label{eq:PsiHG}
			\Psi(p) \ =\  \oplus_{E \in \cD_{L}} \, p(E),
		\end{equation}
		and denote the resulting pre-$C^{*}$-algebra by $\povm(\underline{n})$ and its completion by $\POVM(\underline{n}).$ In this way, $\POVM(\underline{n})$ is naturally a sub-$C^{*}$-algebra of $ B(\HG).$
		
		\smallskip
		
		Setting, for each \(i\) 
		\[
		y_i \ = (y_{i,1},\dots,y_{i,n_i-1}),
		\]
		there is a canonical identification
		\begin{equation}
			\label{eq:Clx-ast}
			\C\langle y\rangle
			\;=\;
			\C\langle y_1\rangle * \cdots * \C\langle y_m\rangle
		\end{equation}
		as unital $*$-algebras  induced by the map defined on alternating products by
		\[
		p_{i_1}(y_{i_1}) * \cdots * p_{i_k}(y_{i_k}) 
		\ \mapsto \ 
		p_{i_1}(y_{i_1})\cdots p_{i_k}(y_{i_k}),
		\qquad
		p_{i_j}\in \C\langle y_{i_j}\rangle .
		\]
		
		\smallskip
		
		In what follows, $ \cbast $ denotes the universal free product of $C^*$-algebras. More precisely, if $ \cB_1 $ and $ \cB_2 $ are unital $ C^*$-algebras, then $\cB_1 \cbast \cB_2$ denotes the unital $C^*$-algebra obtained by completing the algebraic free product $ \cB_1 \ast \cB_2$ with respect to the universal norm; see \eqref{eq:univnorm}. We refer the reader to \cite{VDN92} for a detailed discussion on the free product of $C^{*}$-algebras. 
		
		\begin{proposition} \label{p:twoways}
			The canonical identification of the  $*$-algebras in equation~\eqref{eq:Clx-ast} induces the identifications
			\begin{enumerate}[\rm (1)]\itemsep=5pt 
				\item  $\povm(\underline{n}) \ =\    \povm(n_1) \ast \cdots \ast \povm(n_{m});$  %
				\item $\Psi \ =\  \Psi_{n_1} \ast \cdots \ast \Psi_{n_{m}}: \C\la y\ra \to \povm(\underline{n});$ and 
				\item  $\POVM(\underline{n}) \ =\  \POVM(n_{1}) \cbast \cdots \cbast \POVM(n_{m}).$ 
			\end{enumerate}
		\end{proposition}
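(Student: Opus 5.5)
The plan is to prove the three identifications by comparing norms, and to reduce every norm comparison to a comparison of representations. Item (1) is a statement about norms on the common underlying $*$-algebra $\C\la y\ra$. Via \eqref{eq:Clx-ast} the algebraic free product $\povm(n_1)\ast\cdots\ast\povm(n_m)$ is $\C\la y\ra$ as a $*$-algebra. The norm it carries is the universal free product norm \eqref{eq:univnorm}: the supremum of $\|(\rho_1\ast\cdots\ast\rho_m)(p)\|$ over all families of $*$-representations $\rho_i$ of $\povm(n_i)$ that are bounded for the norms $\|\cdot\|_{n_i}$ and act on a common Hilbert space. The norm on $\povm(\underline n)$ is $\|\Psi(p)\|$. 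So I will show these two norms coincide on $\C\la y\ra$. Item (2) then follows at once, since $\Psi$ restricted to each factor $\C\la y_i\ra$ evaluates at the $i$-th block of $E\in\cD_L$. Item (3) follows from (1) by taking completions, since the universal $C^*$-free product of the completions is the completion of the algebraic free product under the universal norm.

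For the inequality $\|\Psi(p)\|\le \|p\|_{\rm univ}$, note that each $E\in\cD_L(\C^\ell)$ splits into blocks $E_{i,*}=(E_{i,1},\dots,E_{i,n_i-1})$ with $E_{i,*}\in\cD_{n_i}(\C^\ell)$, by \eqref{eq:DLE}. Then $p\mapsto p(E)$ is exactly $\rep_{E_{1,*}}\ast\cdots\ast\rep_{E_{m,*}}$. Each $\rep_{E_{i,*}}$ is contractive for $\|\cdot\|_{n_i}$, being one of the summands of $\Psi_{n_i}$. Taking the direct sum over all $E$ gives the inequality.

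For the reverse inequality, take $*$-representations $\rho_i$ of $\povm(n_i)$ on a common Hilbert space $\cE$, each bounded for $\|\cdot\|_{n_i}$, so that they extend to $\POVM(n_i)$. By Lemma~\ref{l:repCpovm}, $\rho_i=\rep_{Y_i}$ for some $Y_i\in\cD_{n_i}(\cE)$. Put $Y=(Y_1,\dots,Y_m)$. By \eqref{eq:DLE}, $Y$ satisfies $L(Y)\succeq0$ as operators on $\cE$, and $(\rho_1\ast\cdots\ast\rho_m)(p)=p(Y)$. It remains to bound $\|p(Y)\|\le\|\Psi(p)\|$. I will do this by running the compression argument of Lemma~\ref{l:infvfin} with the pencil $L$ in place of $L[n]$.
\begin{itemize}
\item Fix $d\ge\deg p$ and a unit vector $e\in\cE$.
\item Compress $Y$ to the finite-dimensional subspace $\{q(Y)e:\deg q\le d\}$ to obtain some $E$.
\item The inequalities $E_{i,j}\succeq0$ and $\sum_j E_{i,j}\preceq I$ survive compression, so $E\in\cD_L(\C^\ell)$.
\item The same telescoping $VV^*$ computation gives $p(E)e=p(Y)e$, hence $\|p(Y)e\|=\|p(E)e\|\le\|\Psi(p)\|$.
\end{itemize}
Taking the supremum over $e$ gives $\|p(Y)\|\le\|\Psi(p)\|$, and this proves (1).

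The main obstacle I expect is bookkeeping rather than substance: making sure that the universal norm \eqref{eq:univnorm} is genuinely finite and is a $C^*$-norm, so that the free product is well defined, and that "representation of $\povm(n_i)$" means one bounded for $\|\cdot\|_{n_i}$. Finiteness follows from the second inequality: the universal norm is dominated by $\|\Psi(p)\|<\infty$. Definiteness follows from the first inequality, because $\Psi$ is faithful, as noted for $\Psi_n$, since $\cD_L(\C^\ell)$ has nonempty interior. With that settled, the identifications in (1)–(3) are compatible with \eqref{eq:Clx-ast} by construction.
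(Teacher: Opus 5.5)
Your proposal is correct and is essentially the paper's argument. Both compare $\|\Psi(p)\|$ with the universal free-product norm in two directions. For $\|p\|_{\rm univ}\le\|\Psi(p)\|$ you use Lemma~\ref{l:repCpovm} together with the compression argument of Lemma~\ref{l:infvfin} run for $\cD_L$, which is exactly the proof of Lemma~\ref{l:infvfin+} that the paper states without proof. For the reverse inequality you split $E\in\cD_L$ into its blocks $E_{i,*}\in\cD_{n_i}$.

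The differences are cosmetic. You file the norm comparison under item (1) rather than (3). You also restrict the second direction to finite-dimensional $E$, so contractivity of each $\rep_{E_{i,*}}$ comes directly from its being a summand of $\Psi_{n_i}$; the paper instead invokes a universal property for arbitrary $Y\in\cD_L(\cE)$, which silently needs Lemma~\ref{l:infvfin} again.
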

		
		The proof of Proposition~\ref{p:twoways} will use the following analog of Lemma~\ref{l:infvfin}.
		
		\begin{lemma}
			\label{l:infvfin+}
			If  $\HE$ is a Hilbert space,  $Y\in \cD_L(\HE)$ and $p\in \C \la y\ra,$ then 
			\[
			\|p(Y)\| \ \le\  \|p\|.
			\]
		\end{lemma}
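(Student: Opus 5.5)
The plan is to mimic the proof of Lemma~\ref{l:infvfin}, replacing $\cD_n$ by $\cD_L$. By \eqref{eq:DLE}, $Y\in\cD_L(\HE)$ means that each $Y_{i,j}$ is psd and that $\sum_{j=1}^{n_i-1} Y_{i,j}\preceq I$ for every $i$. This description involves only psd conditions on the individual $Y_{i,j}$ and on $I-\sum_j Y_{i,j}$, and such conditions survive compression by an isometry.

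Fix $p\in\C\la y\ra$ of degree at most $d$. Since $\|p(Y)\|=\sup_{\|e\|=1}\|p(Y)e\|$, it is enough to show that $\|p(Y)e\|\le\|p\|$ for each unit vector $e\in\HE$. Let $\cF=\{q(Y)e:\deg q\le d\}$. This is a finite-dimensional subspace of $\HE$; set $\ell=\dim\cF$ and let $V:\cF\to\HE$ be the inclusion. Put $E_{i,j}=V^*Y_{i,j}V$.

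Next I verify that $E\in\cD_L(\C^\ell)$. Each $E_{i,j}$ is psd because compression preserves positivity. For each $i$,
\[
I_\cF-\sum_{j=1}^{n_i-1}E_{i,j} \ =\ V^*\Bigl(I-\sum_{j=1}^{n_i-1}Y_{i,j}\Bigr)V \ \succeq\ 0.
\]
Hence $L(E)\succeq 0$, since $L=\oplus_i n_iL_i$ is block diagonal with exactly these diagonal entries.

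I then repeat the word-by-word argument of Lemma~\ref{l:infvfin}. Take a word $w$ of length at most $d$ in the letters $y_{i,j}$. Every tail $Y_{i_{k+1}}\cdots Y_{i_r}e$ of $w(Y)e$ lies in $\cF$, so $VV^*$ acts as the identity on it. Inserting $VV^*$ between the factors therefore gives $w(E)e=V^*w(Y)e=w(Y)e$. By linearity, $p(E)e=p(Y)e$.

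Finally, $E$ is one of the tuples indexing the direct sum in \eqref{eq:PsiHG}, so $p(E)$ is a compression (a direct summand) of $\Psi(p)$. This gives
\[
\|p(Y)e\| \ =\ \|p(E)e\| \ \le\ \|p(E)\| \ \le\ \|\Psi(p)\| \ =\ \|p\|.
\]
Taking the supremum over unit vectors $e$ yields $\|p(Y)\|\le\|p\|$.

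I do not expect a real obstacle. The only point needing care is the identification of $\cD_L$ via \eqref{eq:DLE}, which shows that membership is preserved under compression to $\cF$. Even if one works with the scaled blocks $n_iL_i$, the scaling by $n_i>0$ does not affect positivity.
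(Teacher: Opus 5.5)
Your proof is correct and is essentially the argument the paper intends. The paper gives no separate proof of this lemma and presents it as the analog of Lemma~\ref{l:infvfin}; your steps carry out that analogy in full: compressing $Y$ to $\{q(Y)e : \deg q\le d\}$, checking $E\in\cD_L(\C^\ell)$ via \eqref{eq:DLE}, proving $p(E)e=p(Y)e$ word by word, and bounding $\|p(E)\|$ by the corresponding direct summand of $\Psi(p)$.
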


		\begin{proof}[Proof of Proposition~\ref{p:twoways}]
			Items (1) and (2) follow immediately from the definitions and the canonical identification of the free algebras in \eqref{eq:Clx-ast}.
			
			To prove (3), we must show that the norm on $\povm(\underline{n})$, inherited from its embedding in $ B(\HG)$ via $\Psi$, coincides with the universal free product $C^*$-norm on the algebraic free product $\ast_{i=1}^m \povm(n_i)$.
			By definition, the norm of an element $p \in \ast_{i=1}^{m}\povm(n_i)$ in the free product $C^*$-algebra $\POVM(n_1)\cbast\cdots\cbast \POVM(n_m)$ is given by the universal norm:
			\begin{equation}\label{eq:univnorm}
				\|p\|_{\rm univ} \ =\ 
				\sup \left\{ \|\pi(p)\| \ : \ \pi = \pi_1 \ast \cdots \ast \pi_m \right\},
			\end{equation}
			where each $\pi_i : \POVM(n_i) \to  B(\HE)$ is a unital $*$-representation on a common Hilbert space $\HE$.
			
			Let $\pi = \pi_1 \ast \cdots \ast \pi_m$ be such a representation on $\HE$. For each $i$, let $Y_{i,j} = \pi_i(y_{i,j})$. By Lemma~\ref{l:repCpovm}, $Y_i = (Y_{i,1}, \dots, Y_{i,n_i-1}) \in \cD_{n_i}(\HE)$. Thus, the combined tuple $Y = (Y_1, \dots, Y_m)$ belongs to $\cD_L(\HE)$. Since $\pi(p) = p(Y)$ and the norm on $\POVM(\underline{n})$ is defined as the supremum over all such evaluations (see \eqref{eq:PsiHG} and Lemma~\ref{l:infvfin+}), it follows that
			\[
			\|\pi(p)\| \ =\ \|p(Y)\| \ \le\ \|p\|_{\POVM(\underline{n})}.
			\]
			Taking the supremum over all free product representations $\pi$ yields
			\begin{equation}
				\label{eq:astornot_1}
				\|p\|_{\rm univ} \ \le\ \|p\|_{\POVM(\underline{n})}.
			\end{equation}
			
			Conversely, the norm $\|p\|_{\POVM(\underline{n})}$ is achieved by taking the supremum of $\|p(Y)\|$ over all $Y \in \cD_L(\HE)$ and all Hilbert spaces $\HE$. Let $Y = (Y_1, \dots, Y_m) \in \cD_L(\HE)$. Then each $Y_i \in \cD_{n_i}(\HE)$, which, by the universal property of $\POVM(n_i)$, induces a unital $*$-representation $\rep_i: \POVM(n_i) \to  B(\HE)$ given by $\rep_i(y_{i,j}) = Y_{i,j}$. These representations naturally combine into a free product representation $\rep = \rep_1 \ast \cdots \ast \rep_m$ of the algebraic free product on $\HE$. For this representation, $\rep(p) = p(Y)$. Consequently,
			\[
			\|p(Y)\| \ =\ \|\rep(p)\| \ \le\ \|p\|_{\rm univ}.
			\]
			Taking the supremum over all $Y \in \cD_L(\HE)$ gives
			\begin{equation}
				\label{eq:astornot_2}
				\|p\|_{\POVM(\underline{n})} \ \le\ \|p\|_{\rm univ}.
			\end{equation}
			
			Combining \eqref{eq:astornot_1} and \eqref{eq:astornot_2}, we obtain $\|p\|_{\POVM(\underline{n})} = \|p\|_{\rm univ}$ for all $p \in \ast_{i=1}^m \povm(n_i)$. Since $\povm(\underline{n})$ is dense in $\POVM(\underline{n})$ by definition, and the algebraic free product $\ast_{i=1}^m \povm(n_i)$ is dense in $\POVM(n_1) \cbast \cdots \cbast \POVM(n_m)$, their respective $C^*$-completions coincide. Thus, $\POVM(\underline{n}) = \POVM(n_1) \cbast \cdots \cbast \POVM(n_m)$.
		\end{proof}

		\smallskip
		
		Theorem~\ref{t:posforpovmm} below is the main result of this section.
		
		\begin{thm}\label{t:posforpovmm}
			The ordered $*$-algebra $\povm(\underline{n})$ has a perfect Positivstellensatz. 
			Let $\pp\in  B(\cH)\otimes\povm(\underline{n})$ be of degree $2\vd+1$ (as an element of $ B(\cH) \otimes \C\la y\ra$). Then $\pp$ is positive if and only if the following hold.
			
			\begin{enumerate}[\rm (1)]\itemsep=5pt
				
				\item If $\cH$ is (separable and) infinite-dimensional, then there exist
				\[
				f,\,f_i,\,f_{i,j} \in  B(\cH)\otimes \povm(\underline{n})
				\]
				of degree $\vd$ such that
				\[
				\pp
				\ =\ 
				f^*f
				+
				\sum_{i=1}^m
				\left[
				\sum_{j=1}^{n_i-1} f_{i,j}^*\, y_{i,j}\, f_{i,j}
				+
				f_i^*\!\Big(1-\sum_{j=1}^{n_i-1} y_{i,j}\Big) f_i
				\right].
				\]
				
				\item If $\cH=\C^\nu$ is finite-dimensional, then there exist a positive
				integer $N$ and elements
				\[
				f_k,\,f_{i,k},\,f_{i,j,k}\in M_\nu(\C)\otimes \povm(\underline{n})
				\]
				of degree $\vd$ such that
				\[
				\pp
				\ =\ 
				\sum_{k=1}^N f_k^*f_k
				+
				\sum_{k=1}^N \sum_{i=1}^m
				\left[
				\sum_{j=1}^{n_i-1} f_{i,j,k}^*\, y_{i,j}\, f_{i,j,k}
				+
				f_{i,k}^*\!\Big(1-\sum_{j=1}^{n_i-1} y_{i,j}\Big) f_{i,k}
				\right].
				\]
			\end{enumerate}
		\end{thm}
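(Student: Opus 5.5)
The plan is to reduce Theorem~\ref{t:posforpovmm} to Proposition~\ref{prop:WS}, applied with $\cF=\cH$. The key step is to relate ``positive in $B(\cH)\otimes\povm(\underline{n})$'' to ``$\pp(E)\succeq 0$ for all $E\in\cD_\snmL$''. We read positivity of $\pp$ as positivity of $(\mathrm{id}\otimes\Psi)(\pp)$ in $B(\cH)\otimes B(\HG)$, equivalently in $B(\cH)\otimes\POVM(\underline{n})$; note $B(\cH)\otimes\POVM(\underline{n})$ sits inside $B(\cH\otimes\HG)$ via the faithful representation $\Psi$. Since $\Psi(\pp)=\oplus_{E\in\cD_\snmL}\pp(E)$, we get that $(\mathrm{id}\otimes\Psi)(\pp)\succeq0$ if and only if $\pp(E)\succeq0$ for every $E\in\cD_\snmL(\C^\ell)$ and every $\ell$. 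So positivity of $\pp$ is exactly the hypothesis of Proposition~\ref{prop:WS}.

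\textbf{Forward implication.} Assume $\pp$ is positive. Then $\pp$ is self-adjoint, because $\Psi$ is faithful and a positive operator is self-adjoint; alternatively, $\cD_\snmL$ has nonempty interior, as observed in Section~\ref{sec:proofoft1}.
\begin{enumerate}[\rm (a)]
\item If $\cH$ is infinite-dimensional, Proposition~\ref{prop:WS}\ref{i:WS:i} gives $f\in B(\cH)\otimes\C\la y\ra_\vd$ and $f_i,f_{i,j}\in B(\cH,\C)\otimes\C\la y\ra_\vd$ with the displayed identity. To land in $B(\cH)\otimes\povm(\underline{n})$, fix a unit vector $e\in\cH$ and replace each row-valued $f_i$ by $e f_i\in B(\cH)\otimes\C\la y\ra_\vd$. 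This does not change $f_i^*(\cdot)f_i$, since $e^*e=1$ and the middle factor is scalar-valued in $y$. The same replacement is made for $f_{i,j}$.
\item If $\cH=\C^\nu$, Proposition~\ref{prop:WS}\ref{i:WS:ii} gives row polynomials in $M_{1,\nu}(\C)\otimes\C\la y\ra_\vd$. Group them into $\nu$-tuples, either by padding with zero rows or by stacking $\nu$ rows into one $M_\nu(\C)$-valued polynomial. Stacking rows $g^{(1)},\dots,g^{(\nu)}$ into a matrix $G$ gives $G^*G=\sum_s g^{(s)*}g^{(s)}$, and likewise $G^*y_{i,j}G=\sum_s g^{(s)*}y_{i,j}g^{(s)}$ because $y_{i,j}$ is scalar. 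This yields elements of $M_\nu(\C)\otimes\povm(\underline{n})$ of degree $\vd$, and a finite $N$.
\end{enumerate}

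\textbf{Backward implication.} Each summand is positive when evaluated at any $E\in\cD_\snmL$. Indeed, $y_{i,j}(E)=E_{i,j}\succeq0$ and $(1-\sum_j y_{i,j})(E)\succeq0$ by \eqref{eq:DLE}, so $f^*(E)\,(I\otimes E_{i,j})\,f(E)\succeq0$ for self-adjoint $E$. Hence $\Psi(\pp)\succeq0$.

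The ``perfect'' qualifier refers to the degree bound: the certificate uses degree $\vd$ for an element of degree $2\vd+1$. This is inherited from Theorem~\ref{t:main1} through Propositions~\ref{prop:aff} and~\ref{prop:WS}, since the affine change of variables preserves degree.

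\textbf{Main obstacle.} The only delicate point is the identification of positivity in the (pre-)$C^*$-algebra with positivity on all finite-dimensional points of $\cD_\snmL$. I would justify it as follows. $\Psi$ is a faithful $*$-representation, so positivity in $B(\cH)\otimes\POVM(\underline{n})$ equals positivity of the image $\oplus_E\pp(E)$. If positivity is instead meant over all representations, Lemma~\ref{l:repCpovm} together with Lemma~\ref{l:infvfin} shows that every representation is given by some $Y\in\cD_\snmL(\HE)$. The compression argument of Lemma~\ref{l:infvfin}, applied to vectors of $\cH\otimes\HE$ and degrees up to $2\vd+1$, then reduces $\langle\pp(Y)\gamma,\gamma\rangle\ge0$ to finite-dimensional $E$. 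That compression needs $\gamma$ approximated by finite sums, which handles the infinite-dimensional $\cH$ case.
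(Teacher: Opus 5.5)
Your proposal is correct and is essentially the paper's proof: the paper likewise identifies positivity of $\pp$, via the faithful map $\Psi$, with $\pp(E)\succeq 0$ for all $E\in\cD_\snmL$, notes $\pp=\pp^*$, and invokes Proposition~\ref{prop:aff} (whose concrete form for this pencil is Proposition~\ref{prop:WS}); your repackaging into square $B(\cH)$- or $M_\nu(\C)$-valued polynomials only makes explicit what the paper leaves implicit. One caution on step (a): Proposition~\ref{prop:WS}\ref{i:WS:i} as printed, with $f_i,f_{i,j}\in B(\cF,\C)\otimes\C\la y\ra_\vd$, cannot hold when $\cF$ is infinite-dimensional (for $m=1$, $n_1=2$, $\vd=0$ and $p=I_\cF\, y_{1,1}$ it would force $I_\cF=f_{1,1}^*f_{1,1}-f_1^*f_1$, with both terms of rank at most one), whereas Proposition~\ref{prop:aff}, through Theorem~\ref{c:HKM12}\ref{i:cHKM:ii}, actually gives $f_i,f_{i,j}$ with coefficients in $B(\cF,\cE)$ for a separable infinite-dimensional $\cE$, so you should replace left multiplication by a unit vector $e$ with composition by a unitary $\cE\to\cH$.
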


		\begin{proof}
			The backward implication is immediate. We prove the forward implication. The polynomial  $\pp$ is identified with $\pp\in  B(\cH) \otimes \C \la y \ra,$ which in turn is identified with its
			image in $\BO(\HG).$ Thus,  the assumption that $\pp$ is positive means $\pp(E)\succeq 0$ for all $E\in \cD_L.$  Since
			also $\pp=\pp^*,$   Proposition~\ref{prop:aff} applies to $\pp$ yielding the desired conclusion. 
		\end{proof}

		\section{Proof of Theorem~\ref{t:main2}}\label{s:WS}
		
		This section is devoted to the proof of Theorem \ref{t:main2}. In a first step we reduce the problem from a free product of finite abelian groups to a free product of finite cyclic groups.
		
		\subsection{From abelian to cyclic} \label{ssec:abel2cyc}
		Our goal is to express positive elements of the group algebra
		\begin{equation*}
			\C[\mathbb{G}_{1}\ast \mathbb{G}_{2} \ast \cdots \ast \mathbb{G}_{m}]
			\ =\ 
			\C[\mathbb{G}_{1}] \ast \C[\mathbb{G}_{2}] \ast \cdots \ast \C[\mathbb{G}_{m}],
		\end{equation*}
		as sums of squares, where \(\mathbb{G}_{1},\mathbb{G}_{2},\dots,\mathbb{G}_{m}\) are finite abelian groups. The result is naturally interpreted in terms of polynomials. Doing so
		makes two novel aspects transparent. There are provable degree bounds and the  result holds even for polynomials with
		operator coefficients. 
		
		\smallskip
		
		For any finite abelian group $\mathbb{G},$ the $C^{*}$-algebra $\C[\mathbb{G}]$ is isomorphic to $\mathbb{C}^{|\mathbb{G}|}$, where $|\mathbb{G}|$ is the cardinality of $\mathbb{G}.$ Consequently,
		\[
		\C[\mathbb{G}_{1}] \ast \C[\mathbb{G}_{2}] \ast \cdots \ast \C[\mathbb{G}_{m}]
		\ \cong\ 
		\C[\Z_{n_{1}}] \ast \C[\Z_{n_{2}}] \ast \cdots \ast \C[\Z_{n_{m}}] \ =\ \C[\Z_{n_{1}} \ast \Z_{n_{2}} \ast \cdots \ast \Z_{n_{m}}],
		\]
		where $n_{i}$ is the cardinality of $\mathbb{G}_{i}.$ Moreover, such an isomorphism preserves both extent and positivity: an element
		\[
		p \ \in\  \C[\mathbb{G}_{1}\ast \mathbb{G}_{2} \ast \cdots \ast \mathbb{G}_{m}]
		\]
		has extent $\vd$ if and only if its image has extent $\vd$, and $p$ is positive if and only if its image is positive. Thus, it is enough to prove Theorem \ref{t:main2} for %
		a free product of finite cyclic groups.
		
		\subsection{Free product of finite cyclic groups} For the rest of this section, set 
		\[
		\groupW \ =\  \Z_{n_{1}} \ast \Z_{n_{2}} \ast \cdots \ast \Z_{n_{m}}.
		\]
		We shall express positive elements of the group algebra
		\begin{equation*}
			\C[\Z_{n_{1}} \ast \Z_{n_{2}} \ast \cdots \ast \Z_{n_{m}}] \ =\ \C[\Z_{n_1}] \ast \C[\Z_{n_2}] \ast \cdots \ast \C[\Z_{n_m}]
		\end{equation*}
		as sums of squares by applying the Positivstellensatz for \(\povm(\underline{n})\), namely Theorem~\ref{t:posforpovmm}.
		
		\smallskip
		
		Let $\cx_i$ denote a generator of $\Z_{n_i}.$ Thus, with multiplication as the
		group operation,  $\Z_{n_i}$ is, as a set, $\{\cx_{i}^j: 0\le j <n_i\}.$  Elements of $\groupW$ are words in $\cx=(\cx_1,\dots,\cx_m)$. A word $w \in \groupW$ has the form
		\[
		w\ =\  \cx_{i_1}^{j_1}  \, \cx_{i_2}^{j_2} \cdots \cx_{i_k}^{j_k}, \qquad 1\le j_\ell < n_{i_\ell}, \quad i_1\ne i_2\ne \cdots \ne i_k.
		\]
		\begin{remark} \label{def:total degree}\rm
			Here one defines the \df{total degree} of the word $w$
			to be $\sum_{\ell=1}^k j_\ell.$
			The \df{total degree} of a polynomial \(p\) in $\cx$ is then the largest total degree among all words \(w\) appearing with nonzero coefficient in \eqref{eq:poly}.
		\end{remark}
		
		Let \df{\(\cU(\underline{n})\)} denote the set of all \(m\)-tuples of unitary operators
		\[
		U=(U_1,\dots,U_m)
		\]
		on separable Hilbert space satisfying
		\[
		U_i^{\,n_i} \ =\ I,\qquad i=1,\dots,m.
		\]
		For such a tuple \(U\), define
		\[
		p(U)\ :=\ \sum_{w}^{\mathrm{finite}} P_w \otimes U^w.
		\]
		Then
		\[
		p^*(U)\ =\ p(U)^*.
		\]
		
		\smallskip
		
		Because \(\groupW\) is the free product of finite cyclic groups, every unitary representation \(\rep\in\Pi(\groupW)\) is uniquely determined by a tuple \(U=(U_1,\dots,U_m)\in \cU(\underline{n})\), and conversely every such tuple determines a unitary representation via
		\[
		\cx_i \mapsto U_i,\qquad i=1,\dots,m.
		\]
		Therefore, a polynomial $p \in   B(\cE) \otimes \C[\groupW]$ is positive  if and only if $p(U)\succeq0$ for all $U \in \cU(\underline{n}),$ and $p$ is hermitian  if 
		and only if $p(U)$ is hermitian for all $U \in \cU(\underline{n})$.
		
		\smallskip
		
		The norm on $\C[\groupW]$ defined by 
		\[
		\|p\| \ =\  \sup\left\{\, \big \|  p(U) \big \| :\  U \in \cU(\underline{n})\right\}
		\]
		(by considering the left regular action of $\C[\groupW]$ on $\ell^{2}(\groupW)$, it is easy to see that $\| \cdot \|$ is a norm, not just a semi-norm)
		satisfies the $C^{*}$ identity, $\|p(U)\|^2 =\| p(U)^*p(U)\|.$ Thus $\C[\groupW]$ (with this norm) is a pre-$C^{*}$-algebra whose completion is the free product (amalgamated over $\C$)  $C^{*}$-algebra,
		\index{$C^*(\groupW)$}
		\begin{equation} \label{eq:CWcompletion}
			C^*(\groupW)\ :=\  \C[\Z_{n_1}] \cbast \C[\Z_{n_2}] \cbast \cdots \cbast \C[\Z_{n_m}].
		\end{equation}

		\smallskip
		
		Let $\cE$ be any Hilbert space. The order (in the sense of positive semidefinite) and norm extend to polynomials in $ B(\cH)\otimes \C[\groupW]$ either by viewing $ B(\cH)\otimes \C[\groupW]$ as a subalgebra of the $C^{*}$-algebra $ B(\cH)\otimes C^*(\groupW)$
		{with the spatial (min) $C^*$-tensor product norm} 
		or more directly by the condition $p$ is positive if and only if  $p(U) \succeq 0$ for each $U \in \cU(\underline{n}).$

		\smallskip

		\subsection{Applying Boca's theorem}
		In this subsection Boca's theorem (\cite{Boc91, DK17}) is applied in anticipation of   transferring the Positivstellensatz of Theorem~\ref{t:posforpovmm} for $\povm(\underline{n})$ to a corresponding result for $\C[\groupW].$

		\subsubsection{The algebra \texorpdfstring{$\CZ$}{C[Z\_n]} in projection form}
		
		The group $*$-algebra $\CZ$ of the cyclic group $\Zn$ is canonically isomorphic to
		$\C^n$ as a $C^{*}$-algebra
		via the Fourier transform.  For our purposes it is convenient to present $\CZ$
		as the universal $*$-algebra generated by selfadjoint idempotents 
		\[
		\cq_1,\dots,\cq_n
		\]
		subject to the relations
		\begin{equation}\label{eq:CZn-P-presentation}
			\cq_i^* \ =\  \cq_i \ =\  \cq_i^2,\quad
			\cq_i \cq_j \ =\  0 \ (i\neq j),\quad
			\cq_1+\cdots+\cq_n \ =\  1.
		\end{equation}
		The $\cq_i$ are then minimal central projections and form a basis of $\CZ$.  To do so
		concretely  set
		\begin{equation}\label{eq:spectral-Pk}
			\cq_k \ =\  \frac1n\sum_{j=0}^{n-1} \omega^{-jk}\cx^j \in \CZ,
		\end{equation}
		where $\cx$ is a generator of the group $\Z_n$ and $\omega$ is a primitive $n$-th root of unity. Since the involution on $\CZ$ is given by, $(\cx^j)^*=\cx^{-j},$ it is readily
		checked that $\cq_k$ so defined satisfies the relations in equation~\eqref{eq:CZn-P-presentation}. 
		Moreover, if $U$ is a unitary operator satisfying $U^n=1,$ then 
		\[
		\cq_k(U) \ =\  \frac1n\sum_{j=0}^{n-1} \omega^{-jk} U^j,
		\]
		is the projection onto the spectral subspace of $U$ associated to its eigenvalue $\omega^k.$
		
		\smallskip 
		
		Let $\Omega_n: \CZ\to\povm(n)$ denote the unital linear map determined by $\Omega_n(\cq_j) = y_j\in \C \la y\ra,$
		for $j=1,2,\dots,n-1,$
		where $\C \la y\ra$ is identified with $\povm(n).$ 
		
		\begin{lemma}\label{cl:1}
			The linear map $\Omega_n:\CZ\to\povm(n)$ is completely positive.
		\end{lemma}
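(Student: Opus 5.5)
Since $\CZ\cong\C^n$ is a commutative finite-dimensional $C^*$-algebra with minimal projections $\cq_1,\dots,\cq_n$, I will reduce complete positivity of $\Omega_n$ to a statement about evaluations. Note first that $\Omega_n(\cq_n)=\Omega_n(1)-\sum_{j<n}\Omega_n(\cq_j)=1-\sum_{j=1}^{n-1}y_j$. Also, by construction $\povm(n)$ sits faithfully inside $B(\HG_n)$ via $\Psi_n$. So it suffices to show that, for every $\ell$ and every $E\in\cD_n(\C^\ell)$, the composite $\rep_E\circ\Omega_n:\CZ\to M_\ell(\C)$ is completely positive. Indeed, the matrix levels of $\povm(n)$ are ordered via $\mathrm{id}_k\otimes\Psi_n$, and a $k\times k$ matrix over $\povm(n)$ is positive precisely when its image under every $\mathrm{id}_k\otimes\rep_E$ is psd. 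A direct sum of cp maps is cp, so this reduction is sound.

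Fix $E\in\cD_n(\C^\ell)$ and set $E_n=I-\sum_{j<n}E_j\succeq0$. Then $\rep_E\circ\Omega_n$ is the map
\[
\sum_{k=1}^n c_k\cq_k\ \longmapsto\ \sum_{k=1}^n c_kE_k .
\]
A positive element of $M_k(\CZ)=\bigoplus_{i=1}^n M_k(\C)$ has the form $\sum_i C_i\otimes\cq_i$ with each $C_i\succeq0$. Its image is $\sum_i C_i\otimes E_i$, which is a sum of tensor products of psd matrices and hence psd. This is the standard fact that a positive map out of a commutative $C^*$-algebra is cp, and here it is verified directly.

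It remains to confirm that the order on $M_k(\povm(n))$ is the one induced by $\Psi_n$, which is how the paper defines it. Given that, $(\mathrm{id}_k\otimes\Omega_n)(X)$ is positive for every positive $X$. The only subtle point is this identification of the matrix order. I would state it explicitly, using Lemma~\ref{l:repCpovm} if one prefers to phrase positivity through arbitrary $*$-representations of $\POVMn$, since every representation has the form $\rep_Y$ with $Y\in\cD_n(\HE)$.
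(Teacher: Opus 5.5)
Your proof is correct and is essentially the paper's argument: you write a positive element of $M_k(\CZ)$ as $\sum_i C_i\otimes\cq_i$ with $C_i\succeq0$ and note that its image $\sum_i C_i\otimes\Omega_n(\cq_i)$ is positive because each $\Omega_n(\cq_i)$ is positive. Checking this pointwise through each $\rep_E$, via faithfulness of $\Psi_n$, just unpacks what positivity in $\povm(n)$ means, and your explicit treatment of $\Omega_n(\cq_n)=1-\sum_{j<n}y_j$ (i.e.\ $E_n=I-\sum_{j<n}E_j\succeq0$) fills in a step the paper leaves implicit.
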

		
		\begin{proof}
			First note that $\Omega_n(\cq_j)=y_j\in \povm(n)$ is psd.
			Any $a\in M_k(\CZ)$ can be written uniquely as
			\[
			a \ =\  \sum_{j=1}^n a_j \otimes \cq_j,\qquad a_j \in M_k(\C)
			\]
			and $a$ is psd  if and only if each $a_i$ is psd.  In that case
			\[
			1_k\otimes \Omega_n(a) \ =\  \sum_{j=1}^n a_j\otimes \Omega_n(\cq_j) \succeq 0
			\]
			and the proof is complete.
		\end{proof}

		\subsubsection{Boca's theorem}
		
		As an initial observation, the mapping $\trace_n:\CZ\to \CZ$ on 
		diagonal $n\times n$ matrices 
		defined by
		\[
		\textstyle{\trace_n}(\sum_{j=1}^n a_j \cq_j) \ =\ \left [\frac{1}{n}\sum_{j=1}^n a_j \right ]  \, 1
		\]
		is a completely positive projection onto $\C\, 1.$  Writing 
		\begin{equation}
			\label{e:inverse-FT}
			\cx^j = \sum_{k=1}^n \omega^{j\, k} \cq_k
		\end{equation}
		it is evident that $\trace_n(\cx^j)=0$ for $1\le j<n$ and
		hence   the kernel of $\trace_n$ is the span of $\{\cx^j: 1\le j <n\}.$ 
		For $1\le i\le m,$ let $\Omega_i$ denote the linear map $\Omega_{n_i} : \C[\Z_{n_i}] \to \povm(n_i).$ 
		In the present setting Boca's theorem \cite{Boc91, DK17}  gives the  following result.
		
		\begin{proposition}
			\label{p:boca-appied}
			There is a ucp map $\Omega:\C[\groupW]\to \povm(\underline{n})$ 
			such that $\Omega |_{\C[\Z_{n_i}]} = \Omega_i$ and 
			\begin{equation}
				\label{eq:boca-applied}
				\Omega(\zz_1\cdots\zz_k) \ =\  \Omega_{i_1}(\zz_1) \cdots \Omega_{i_k}(\zz_k),
			\end{equation}
			when $\zz_\ell \in \ker \trace_{n_{i_\ell}}$ and $i_1\ne i_2\ne \cdots \ne i_k  .$
		\end{proposition}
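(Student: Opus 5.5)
Boca's theorem is the statement we need: for unital $C^*$-algebras $\cB_i$ with states (or, more generally, ucp maps to $\C 1$) $\phi_i$, and ucp maps $\Theta_i:\cB_i\to\oC$ into a common unital $C^*$-algebra $\oC$, there is a ucp map $\Theta:\cbast_i \cB_i\to \oC$. This $\Theta$ extends each $\Theta_i$ and is multiplicative on alternating products of elements of $\ker\phi_{i_\ell}$, i.e.\ $\Theta(b_1\cdots b_k)=\Theta_{i_1}(b_1)\cdots\Theta_{i_k}(b_k)$. The plan is to apply this with $\cB_i=\C[\Z_{n_i}]$, $\phi_i=\trace_{n_i}$, $\oC=\POVM(\underline n)$ and $\Theta_i=\iota_i\circ\Omega_i$. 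Here $\iota_i:\POVM(n_i)\to\POVM(\underline n)$ is the canonical inclusion coming from Proposition~\ref{p:twoways}(3). We then restrict the resulting $\Theta$ to the algebraic free product $\C[\groupW]=\ast_i\C[\Z_{n_i}]$ and check that it lands in $\povm(\underline n)$.

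The steps, in order, are as follows. First, each $\Omega_i$ is unital by definition and completely positive by Lemma~\ref{cl:1}. Since $\povm(n_i)\subset\POVM(n_i)$ isometrically, $\Omega_i$ is ucp into $\POVM(n_i)$, and $\iota_i$ is a unital $*$-homomorphism, so $\Theta_i$ is ucp. Second, $\trace_{n_i}$ is a state on $\C[\Z_{n_i}]\cong\C^{n_i}$: it is the uniform average, so it is positive and unital, hence ucp with values in $\C 1$. Because $\C[\Z_{n_i}]$ is finite dimensional, it coincides with its $C^*$-completion. By \eqref{eq:CWcompletion}, $C^*(\groupW)=\cbast_i\C[\Z_{n_i}]$, so Boca's theorem applies and gives a ucp $\Theta:C^*(\groupW)\to\POVM(\underline n)$ with the multiplicativity property on alternating products of kernel elements.

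Third, set $\Omega=\Theta|_{\C[\groupW]}$. We must show that its range lies in the $*$-algebra $\povm(\underline n)$ and not merely in its completion. Every element of $\C[\groupW]$ is a linear combination of $1$ and reduced words $\cx_{i_1}^{j_1}\cdots\cx_{i_k}^{j_k}$ with $1\le j_\ell<n_{i_\ell}$, and we showed above that each such $\cx_{i_\ell}^{j_\ell}\in\ker\trace_{n_{i_\ell}}$. By multiplicativity,
\[
\Omega\big(\cx_{i_1}^{j_1}\cdots\cx_{i_k}^{j_k}\big)=\Omega_{i_1}(\cx_{i_1}^{j_1})\cdots\Omega_{i_k}(\cx_{i_k}^{j_k}),
\]
and the right-hand side is a product of polynomials in $\povm(n_{i_\ell})\subset\povm(\underline n)$. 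By linearity, \eqref{eq:boca-applied} then holds for general kernel elements $\zz_\ell$. The restriction $\Omega|_{\C[\Z_{n_i}]}=\Omega_i$ is exactly the extension property of $\Theta$. Complete positivity of $\Omega$ is inherited from $\Theta$, with positivity in $M_k(\C[\groupW])$ understood via $C^*(\groupW)$ and positivity in $M_k(\povm(\underline n))$ via $\POVM(\underline n)$.

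The main obstacle is checking that the hypotheses of Boca's theorem match exactly. The free product must be the full (universal) one, which is supplied by Proposition~\ref{p:twoways}(3) on the target side and by \eqref{eq:CWcompletion} on the domain side. The maps $\Omega_i$ must map into the $C^*$-algebra $\POVM(\underline n)$ compatibly with the identifications $y_j\leftrightarrow y_{i,j}$. I would handle the target by working in $\POVM(\underline n)$ throughout and using faithfulness of $\Psi$ from \eqref{eq:PsiHG}. This also ensures that the positivity of $y_{i,j}$ used in Lemma~\ref{cl:1} is positivity in $\POVM(\underline n)$.
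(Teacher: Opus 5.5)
Your proposal is correct and follows the paper's approach: the paper obtains this proposition by directly invoking Boca's theorem, with the states $\trace_{n_i}$ on $\C[\Z_{n_i}]$ and the ucp maps $\Omega_i$ from Lemma~\ref{cl:1}. Your write-up supplies the details the paper leaves implicit. These are the full free products on both sides via \eqref{eq:CWcompletion} and Proposition~\ref{p:twoways}, the inclusion of $\POVM(n_i)$ into $\POVM(\underline{n})$, and the check that multiplicativity on reduced words forces the range into the algebraic $\povm(\underline{n})$.
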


		\begin{remark}\rm
			\label{r:boca-applied}
			\pushQED{\qed}
			From equation~\eqref{e:inverse-FT},
			\[
			\Omega(\cx_i^j) = \sum_{k=1}^{n_i} \Omega(\omega_i^{j\, k} \cp_{i,j})
			= \sum_{k=1}^{n_i-1} \omega_i^{j\, k} y_{i,k}  \, + \, (1-\sum_{k=1}^{n_i-1} y_{i,k}).
			\]
			In particular, $\Omega(\cx_i^j)$ is a polynomial of degree (at most) one in $\Cly.$
			
			For a reduced word  $w$  in $\C[\groupW],$ 
			\[
			w \ =\ \cx_{i_1}^{r_1}\cdots \cx_{i_k}^{r_k}, 
			\]
			(thus $1\le r_j < n_{i_j}$), we have, by Proposition~\ref{p:boca-appied},
			\[
			\Omega(w)   = \Omega_{i_1}(\cx_{i_1}^{r_1}) \cdots 
			\Omega_{i_k}(\cx_{i_k}^{r_k}).
			\]
			Further, for a polynomial  $p = \sum p_w w,$ where each $w$ is  reduced,
			\[
			\Omega(p) =\sum_w p_w \Omega(w).   \qedhere \popQED 
			\]
		\end{remark}

		\subsection{A splitting}
		For each $1\le i\le m$ let
		$\cp_{i,k}$ denote 
		\[
		\cp_{i,k} \ =\  \frac1n_i\sum_{j=0}^{n_i-1} \omega^{-jk}\cx_i^j \in \C[\Z_{n_i}]
		\]
		for $k=1,\dots,n_i.$ Compare with equation~\eqref{eq:spectral-Pk}. 
		Since $\C\la\cp\ra$
		is a free unital $*$-algebra generated by $\{\cp_{i,j}: 1\le i\le m, \, 1\le j\le n_i-1\}$ and $\povm(\underline{n})$ is also a unital $*$-algebra generated by $\{y_{i,j}: 1\le i\le m, \, 1\le j\le n_i-1\},$
		there is a unique unital $*$-homomorphism $\widetilde{s}:\C\la\cp\ra \to \povm(\underline{n})$ 
		determined by  $\widetilde{s}(\cp_{i,j})=y_{i,j}.$  Because the map $\Psi:\C \la y\ra\to  B(\HG)$ of equation~\eqref {eq:PsiHG} is faithful, $\widetilde{s}$ induces a $*$-unital map $s:\povm(\underline{n})\to \C[\groupW]$
		determined by $s(y_{i,j}) =\cp_{i,j},$ where $y_{i,j}$ is identified with $\Psi(y_{i,j})$.   
		
		\begin{lemma}\label{cl:2}
			The  unital $*$-homomorphism $s:\povm(\underline{n}) \to \C[\groupW]$ is surjective and splits $\Omega$ in the sense that $s\,(\Omega\,(z))=z$ for $z\in \C[\groupW].$ 
		\end{lemma}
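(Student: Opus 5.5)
Since both $s$ and $\Omega$ are linear, I will prove the splitting identity $s\circ\Omega=\mathrm{id}_{\C[\groupW]}$ and deduce surjectivity from it: every $z\in\C[\groupW]$ equals $s(\Omega(z))$, so $z\in\range s$. The reduced words form a basis of $\C[\groupW]$, so by linearity (Remark~\ref{r:boca-applied}) it suffices to check $s(\Omega(w))=w$ for each reduced word $w=\cx_{i_1}^{r_1}\cdots\cx_{i_k}^{r_k}$ together with the empty word. The empty word is handled by unitality: $\Omega(1)=1$ and $s(1)=1$.

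\textbf{Single generator powers.} Fix $i$ and $1\le j<n_i$. By Remark~\ref{r:boca-applied},
\[
\Omega(\cx_i^j)\ =\ \sum_{k=1}^{n_i-1}\omega_i^{jk}y_{i,k}+\Bigl(1-\sum_{k=1}^{n_i-1}y_{i,k}\Bigr).
\]
Apply the unital map $s$, which sends $y_{i,k}\mapsto\cp_{i,k}$ and $1\mapsto 1$. The image is
\[
\sum_{k=1}^{n_i-1}\omega_i^{jk}\cp_{i,k}+\Bigl(1-\sum_{k=1}^{n_i-1}\cp_{i,k}\Bigr).
\]
In $\C[\Z_{n_i}]$ the $\cp_{i,k}$ satisfy the relations \eqref{eq:CZn-P-presentation}, and in particular $\sum_{k=1}^{n_i}\cp_{i,k}=1$. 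Hence the bracketed term equals $\cp_{i,n_i}$. Since $\omega_i^{jn_i}=1$, the whole expression is $\sum_{k=1}^{n_i}\omega_i^{jk}\cp_{i,k}$, which is $\cx_i^j$ by the inverse Fourier formula \eqref{e:inverse-FT}. So $s(\Omega(\cx_i^j))=\cx_i^j$.

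\textbf{Reduced words.} Each factor $\cx_{i_\ell}^{r_\ell}$ with $1\le r_\ell<n_{i_\ell}$ lies in $\ker\trace_{n_{i_\ell}}$, and consecutive indices are distinct. Proposition~\ref{p:boca-appied} therefore gives $\Omega(w)=\Omega_{i_1}(\cx_{i_1}^{r_1})\cdots\Omega_{i_k}(\cx_{i_k}^{r_k})$. Because $s$ is multiplicative, applying the previous step to each factor yields $s(\Omega(w))=\cx_{i_1}^{r_1}\cdots\cx_{i_k}^{r_k}=w$.

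\textbf{Main obstacle: $s$ is a well-defined homomorphism.} This is the only non-mechanical point. One has to check that $\widetilde{s}$ factors through the identification of $\C\la y\ra$ with $\povm(\underline{n})$ given by $\Psi$. This holds because $\Psi$ is faithful on $\C\la y\ra$, so $\povm(\underline{n})$ is just $\C\la y\ra$ equipped with a norm and no relations are imposed. Consequently $s$ is exactly the unital $*$-homomorphism $\C\la y\ra\to\C[\groupW]$ determined by $y_{i,j}\mapsto\cp_{i,j}$. Multiplicativity of $s$ is then automatic, and it is all the argument above uses. No norm continuity is needed, since every element involved is a polynomial.
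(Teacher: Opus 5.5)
Your proposal is correct and follows essentially the paper's argument: you verify $s\circ\Omega=\mathrm{id}$ on each factor $\C[\Z_{n_i}]$, then pass to reduced words using the multiplicativity in Proposition~\ref{p:boca-appied} together with multiplicativity of $s$, and finish by linearity. The differences are cosmetic: you check the factor case on the powers $\cx_i^j$ via \eqref{e:inverse-FT} instead of on the projections $\cp_{i,j}$, and you get surjectivity directly from the splitting instead of from surjectivity of $\widetilde{s}$.
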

		
		\begin{proof}
			Since $\widetilde{s}$ is surjective, so is $s.$  Since $\cp_{i,j}\in \C[\groupW]$ is mapped to $y_{i,j}\in \povm(\underline{n})$ (identified with $\C\la y\ra$) under $\Omega,$  it follows that
			$s(\Omega(\cp_{i,j})) =\cp_{i,j}.$   Thus,  using Proposition~\ref{p:boca-appied} and 
			Remark~\ref{r:boca-applied}, 
			\[
			s(\Omega(z_\ell)) \ =\  s(\Omega_\ell(z_\ell)) = z_\ell,
			\]
			for {$z_\ell \in  \ker \trace_{n_\ell} \subseteq\C[\Z_{n_\ell}]$}  and consequently, $s\,(\Omega\,(z))=z$ for a (reduced) word $z\in\C[\groupW]$
			by Remark~\ref{r:boca-applied}.
			From here the result follows by linearity.
		\end{proof}
		
		We are now ready to prove Theorem \ref{t:main2}.

		\begin{proof}[Proof of Theorem~\ref{t:main2}]
			Let $\vd$   denote the
			extent of $p$, set
			\[
			d \ =\ \lfloor \frac{\vd}{2} \rfloor,
			\]
			and note $\vd\le 2 d+1$.
			
			\smallskip
			
			We first claim that $\deg \Omega(p)\le \vd$ as a polynomial in the
			variables $y_{i,j}$. {Indeed, by Remark~\ref{r:boca-applied}, if
				\[
				w \ =\ \cx_{i_1}^{r_1}\cdots \cx_{i_k}^{r_k}
				\]
				is a reduced word of extent $k$, then $\Omega(\cx_{i_j})$ has degree at most one
				in $\Cly$ by Remark~\ref{r:boca-applied} and 
				therefore $\Omega(w)$ has degree at most $k$.} Since every word
			appearing in $p$ has extent at most $\vd$, it follows that
			\[
			\deg \Omega(p) \ \le\  \vd \ \le\  2 d+1.
			\]
			
			\smallskip
			
			Applying Theorem~\ref{t:posforpovmm} to $\pp=\Omega(p)$ therefore yields a
			weighted sum-of-squares representation in which all coefficient
			polynomials $f_*,g_*$ have degree at most $d$. Now $q(y_{i,j})=\cp_{i,j}$,
			and each $\cp_{i,j}$ belongs to the single factor
			$\C[\Z_{n_i}]\subseteq \C[\groupW]$. Consequently, if $r$ is a monomial
			of degree $t$ in the variables $y_{i,j}$, then $s(r)$ is a product of
			$t$ elements taken from the factors $\C[\Z_{n_i}]$, and after reducing
			adjacent letters from the same factor one obtains a linear combination of
			reduced words of extent at most $t$. Hence
			\[
			\text{extent of } s(r) \ \le\  \deg r
			\]
			for every polynomial $r$, and in particular $s(g_{k}), s(f_{i,j,k}),$ and $s(f_{i,k})$ all have extent at most $d.$
			Finally, writing
			\[
			\cp_{i,n_i} \ =\ 1-\sum_{j=1}^{n_i-1}\cp_{i,j},
			\]
			and defining
			\[
			\ch_{i,j,k}=\cp_{i,j}\,s(f_{i,j,k}) \quad (1\le j\le n_i-1),\qquad
			\ch_{i,n_i,k}=\cp_{i,n_i}\,s(f_{i,k}),
			\]
			we obtain
			\[
			p \ =\ \sum_k s(g_k)^*s(g_k)+
			\sum_k\sum_{i=1}^m\sum_{j=1}^{n_i}\ch_{i,j,k}^*\ch_{i,j,k}.
			\]
			Since each $\cp_{i,j}$ has extent at most $1$, it follows that
			\[
			\text{extent of } \ch_{i,j,k} \ \le\  d+1 \ =\ \lfloor \frac{\vd}{2} \rfloor +1.
			\]
			Thus all summands have extent at most $\lfloor \frac{\vd}{2} \rfloor +1$. 
		\end{proof}

	\end{document}